\documentclass[a4paper]{article}

\usepackage[english]{babel}
\usepackage[T1]{fontenc}
\usepackage{amssymb}
\usepackage{amsthm}
\usepackage{makeidx}
\usepackage{color}
\usepackage{fullpage}
\usepackage{tikz}
\usepackage{mathtools}
\usepackage[all]{xy}
\usepackage{comment}
\usepackage{authblk}
\usepackage[backend=biber,style=numeric,natbib=true,maxbibnames=99]{biblatex}

\usepackage[a4paper,top=3cm,bottom=2cm,left=3cm,right=3cm,marginparwidth=1.75cm]{geometry}

\usepackage{amsmath}
\usepackage{graphicx}
\usepackage[colorinlistoftodos]{todonotes}
\usepackage[colorlinks=true, allcolors=blue]{hyperref}
\usepackage[all]{xy}

\newtheorem{defi}{Definition}[section]
\newtheorem{example}[defi]{Example}
\newtheorem{teo}[defi]{Theorem}
\newtheorem{coro}[defi]{Corollary}
\newtheorem{lema}[defi]{Lemma}

\newtheorem{pro}[defi]{Proposition}
\newtheorem{rmk}[defi]{Remark}

\providecommand{\keywords}[1]{\textbf{\textit{Keywords---}} #1}

\providecommand{\MSC}[1]{\textbf{\textit{MSC---}} #1}

\addbibresource{bibliography.bib}

\title{A generalization of Kummer theory to \\ Hopf-Galois extensions}
\author[1,2]{Daniel Gil-Muñoz}
\date{}

\affil[1]{\footnotesize Charles University, Faculty of Mathematics and Physics, Department of Algebra, Sokolovska 83, 18600 Praha 8, Czech Republic}
\affil[2]{\footnotesize Institut de Matemàtica, Universitat de Barcelona, Gran Via de les Corts Catalanes, 585, 08007 Barcelona, Spain}

\begin{document}
\maketitle

\begin{abstract}
We introduce a condition for Hopf-Galois extensions that generalizes the notion of Kummer Galois extension. Namely, an $H$-Galois extension $L/K$ is $H$-Kummer if $L$ can be generated by adjoining to $K$ a finite set $S$ of eigenvectors for the action of the Hopf algebra $H$ on $L$. This extends the classical Kummer condition for the classical Galois structure. With this new perspective, we shall characterize a class of $H$-Kummer extensions $L/K$ as radical extensions that are linearly disjoint with the $n$-th cyclotomic extension of $K$. This result generalizes the description of Kummer Galois extensions as radical extensions of a field containing the $n$-th roots of the unity. The main tool is the construction of a product Hopf-Galois structure on the compositum of almost classically Galois extensions $L_1/K$, $L_2/K$ such that $L_1\cap M_2=L_2\cap M_1=K$, where $M_i$ is a field such that $L_iM_i=\widetilde{L}_i$, the normal closure of $L_i/K$. When $L/K$ is an extension of number or $p$-adic fields, we shall derive criteria on the freeness of the ring of integers $\mathcal{O}_L$  over its associated order in an almost classically Galois structure on $L/K$.
\end{abstract}

\MSC{12F10, 11Z05, 16T05, 11R04, 11R18, 11S15}

\keywords{Kummer extension, Hopf-Galois structure, $H$-eigenvector}

\section{Introduction}

Let $n$ be a positive integer and let $K$ be a field whose characteristic is coprime to $n$. If $K$ contains the $n$-th roots of the unity, a Galois field extension $L/K$ is said to be Kummer if the Galois group of $L/K$ is abelian of exponent $n$. These extensions owe their name to Ernst Kummer, who proved that these are the extensions of $K$ arising from the adjunction of $n$-th roots of elements in $K$. Namely, Kummer extensions $L/K$ are of the form $L=K(\alpha_1,\dots,\alpha_k)$ where $\alpha_i^n\in K$ for all $1\leq i\leq n$. The elements $\alpha_i$ are usually referred to as Kummer generators. Among the radical extensions, the ones obtained by adjoining to $K$ a single $n$-th root of an element in $K$ will be called simple radical. As Kummer extensions, they are just the cyclic extensions of $K$.

A typical limitation to classical Kummer theory is the requirement that the ground field contains a primitive $n$-th root of unity. For instance, this implies that the only Kummer extensions of $\mathbb{Q}$ are quadratic. Some authors have considered generalizations of this theory to further classes of extensions \cite{takahashi1968,greither1991,komatsu2009,perucca2020}.

In this paper we will develop a Kummer theory for the extensions $L=K(\alpha_1,\dots,\alpha_k)$ with $\alpha_i^n\in K$ such that $L\cap K(\zeta_n)=K$, where $\zeta_n$ is a primitive $n$-th root of the unity. Under this assumption, the normal closure $\widetilde{L}$ of $L/K$ is a Kummer extension of $K(\zeta_n)$ that shares many properties with the extension $L/K$, even though the latter is not Galois. A suitable setting to visualize $L/K$ as a generalized Kummer extension is the one provided by the theory of Hopf-Galois extensions.

The beginning of Hopf-Galois theory is the notion of Hopf-Galois structure on a finite extension $L/K$. This is a pair formed by a $K$-Hopf algebra $H$ and a $K$-linear action $\cdot\colon H\otimes_KL\longrightarrow L$ such that $L$ is an $H$-module algebra and the canonical map $j\colon L\otimes_KH\longrightarrow\mathrm{End}_K(L)$ is a $K$-linear isomorphism. A Hopf-Galois extension is a finite extension $L/K$ that admits some Hopf-Galois structure $(H,\cdot)$. We also say that $L/K$ is $H$-Galois. Under this definition, every Galois extension is Hopf-Galois but the converse does not hold in general. The notion of Hopf-Galois structure was introduced for the first time by Chase and Sweedler \cite{chasesweedler}, and it has been proved as a useful tool to study problems from classical Galois theory in a more general setting. The main research lines and outcomes of this theory so far have been summarized in the books \cite{childs,childsetal}.

Radical extensions as described above are particular instances of almost classically Galois extensions, a class of Hopf-Galois extensions that are naturally associated to Galois extensions. Concretely, a separable extension of fields $L/K$ is said to be almost classically Galois if for the normal closure $\widetilde{L}$ of $L/K$, the group $G'\coloneqq\mathrm{Gal}(\widetilde{L}/L)$ has some normal complement $J$ in $G\coloneqq\mathrm{Gal}(\widetilde{L}/K)$. The fixed subfield $M=\widetilde{L}^J$ will be referred to as the complement of $L/K$. Associated to $M$ we can construct a Hopf-Galois structure $H$ on $L/K$, which will be referred to as the almost classically Galois structure corresponding to $M$. In particular, an almost classically Galois extension is Hopf-Galois. Note that the extension $\widetilde{L}/M$ is Galois with group $J$, and it has the same degree as $L/K$. We will say that $L/K$ is almost abelian (resp. almost cyclic, resp. almost Kummer) if $\widetilde{L}/M$ is abelian (resp. cyclic, resp. Kummer). Accordingly, the exponent of $L/K$ is defined as the exponent of the group $J$.

\[
\xymatrix@=0.5cm{ & \widetilde{L} \ar@{-}[dd]_G & \\
L \ar@{-}[ur]^{G'} \ar@{-}[dr] & & \ar@{-}[ul]_J \ar@{-}[dl] M \\
& K &
}
\]

Our generalization of Kummer extension arises from the remark that the Kummer generators of a Kummer extension are eigenvectors of all the elements in its Galois group, where these are regarded as $K$-linear endomorphisms of $L$. Note that any Kummer extension admits a finite generating set of Galois eigenvectors, namely its Kummer generators. Here by generating set for an extension $L/K$ we mean a set $S\subseteq L$ such that $L=K(S)$. What is more, it turns out the finite Galois extensions of $K$ admitting a finite generating set of Galois eigenvectors are just the Kummer extensions of $K$. Now, we introduce the generalized Kummer extensions as follows: an $H$-Galois extension of $L/K$ is $H$-Kummer if it admits a finite generating set of elements in $L$ that are eigenvectors of all the elements of $H$, where these are regarded as endomorphisms by the corresponding action of $H$ on $L$ (see Definition \ref{defihkummer}).

We will be able to translate the correspondence between Kummer extensions and radical extensions from the classical case to this more general situation. We will need the following notion: two almost classically Galois extensions $L_1$ and $L_2$ of a field $K$ with complements $M_1$ and $M_2$ are said to be strongly disjoint if $L_1\cap M_2=L_2\cap M_1=K$. An extension that can be written as a compositum of pairwise strongly disjoint almost classically Galois extensions will be called strongly decomposable.

\begin{teo}\label{maintheorem1} Let $n\in\mathbb{Z}_{>0}$, let $K$ be a field with characteristic coprime to $n$ and let $M=K(\zeta_n)$. Let $L/K$ be a strongly decomposable extension of the form $L=K(\alpha_1,\dots,\alpha_k)$ with $\alpha_1,\dots,\alpha_k\in L$ and such that $K(\alpha_i)$, $K(\alpha_j)$ are strongly disjoint almost classically Galois extensions whenever $i\neq j$. The following statements are equivalent:
\begin{enumerate}
    \item\label{mainthm11} $L\cap M=K$, $\alpha_i^n\in K$ for every $1\leq i\leq k$ and $n$ is minimal for this property.
    \item\label{mainthm12} $L/K$ is an almost Kummer extension of exponent $n$ with complement $M$ and $\alpha_1,\dots,\alpha_k$ are $H$-eigenvectors of $L$, where $H$ is the almost classically Galois structure on $L/K$ corresponding to $M$.
\end{enumerate}
In particular, within the strongly decomposable extensions of $K$, the $n$-radical extensions that are linearly disjoint with $M$ are the almost Kummer extensions of exponent $n$ with complement $M$ that are $H$-Kummer.
\end{teo}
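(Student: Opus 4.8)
The plan is to prove the equivalence \ref{mainthm11} $\Leftrightarrow$ \ref{mainthm12} by first settling the single-generator case $k=1$ and then bootstrapping to arbitrary $k$ through the product Hopf-Galois structure, which I treat as the reduction engine. Throughout I write $\widetilde{L}$ for the normal closure, $G=\mathrm{Gal}(\widetilde{L}/K)$, $G'=\mathrm{Gal}(\widetilde{L}/L)$, and $J$ for the normal complement of $G'$ cutting out the complement $M=\widetilde{L}^J$. One structural observation is used repeatedly: since $J$ and $G'$ generate $G$, the identity $L\cap M=\widetilde{L}^{\langle J,G'\rangle}=\widetilde{L}^G=K$ holds automatically for any almost classically Galois structure with complement $M$, so linear disjointness is already built into \ref{mainthm12}. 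As a preliminary step I would make the $H$-action explicit via the Greither--Pareigis description $H=(\widetilde{L}[J])^G$ acting on $L=\widetilde{L}^{G'}$, and read off the bridge fact that $\alpha$ is an $H$-eigenvector if and only if $\sigma(\alpha)\in M\alpha$ for all $\sigma\in J$, that is, if and only if $\alpha$ is a Galois eigenvector of $\widetilde{L}$ under $J$ with eigenvalues in $M$.

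For $k=1$, write $L=K(\alpha)$ and $J=\langle\sigma\rangle$. For \ref{mainthm11} $\Rightarrow$ \ref{mainthm12}: from $\alpha^n=a\in K$ and $L\cap M=K$ one gets $\widetilde{L}=M(\alpha)$, and since $\zeta_n\in M$, Kummer theory over $M$ makes $\widetilde{L}/M$ cyclic with $\sigma(\alpha)=\zeta_n^t\alpha$, the generator $\sigma$ having order $d=n/\gcd(n,t)=[\widetilde{L}:M]$. The key computation is that $\alpha^d$ is $J$-fixed, hence $\alpha^d\in L\cap M=K$, so minimality of $n$ forces $d=n$; thus $\widetilde{L}/M$ is Kummer cyclic of exponent $n$, $\alpha$ is a Galois eigenvector, and by the bridge fact $\alpha$ is an $H$-eigenvector. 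The converse \ref{mainthm12} $\Rightarrow$ \ref{mainthm11} reverses this: an $H$-eigenvector $\alpha$ is a Galois eigenvector $\sigma(\alpha)=\mu\alpha$ with $\mu\in M$, and in the cyclic Kummer extension $\widetilde{L}/M$ every such eigenvector is a monomial $b\gamma^i$ in a Kummer generator $\gamma$, forcing $\mu=\zeta_n^{it}$ to be a root of unity and $\alpha^n\in M$; combined with $\alpha^n\in L$ and $L\cap M=K$ this yields $\alpha^n\in K$, while the exponent $n$ of $\widetilde{L}/M$ together with $L=K(\alpha)$ pins down minimality.

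For general $k$ the plan is to invoke the product Hopf-Galois structure construction (the paper's main structural tool): under the standing pairwise strong disjointness of the $L_i=K(\alpha_i)$, the almost classically Galois structure $H$ on $L/K$ with complement $M$ is the tensor product $H=H_1\otimes_K\cdots\otimes_K H_k$ of the almost classically Galois structures $H_i$ on $L_i/K$, with complements multiplying as $M=M_1\cdots M_k$. Two reductions then finish the proof. First, tracking the counit shows that for $\alpha_i$ embedded in $L\cong L_1\otimes_K\cdots\otimes_K L_k$ one has $h\cdot\alpha_i=\bigl(\prod_{j\neq i}\varepsilon(h_j)\bigr)(h_i\cdot\alpha_i)$, so $\alpha_i$ is an $H$-eigenvector of $L$ exactly when it is an $H_i$-eigenvector of $L_i$. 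Second, with $m_i$ the order of $\alpha_i$ one has $n=\mathrm{lcm}_i(m_i)$ and $M=K(\zeta_n)=K(\zeta_{m_1})\cdots K(\zeta_{m_k})$, whence each $M_i=K(\zeta_{m_i})$ and $L_i\cap M_i\subseteq L_i\cap M=K$; applying the single-generator equivalence to each $L_i$ and reassembling (a compositum over $M$ of Kummer extensions is Kummer, of exponent the lcm) transfers \ref{mainthm11} and \ref{mainthm12} between $L/K$ and its factors.

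The main obstacle is the interface where Hopf-theoretic and Galois-theoretic data must be matched exactly. On the one hand, the product-structure step must deliver the complement of $H$ as precisely $K(\zeta_n)$ and decompose $J=\prod_i J_i$ compatibly with the factor complements $M_i=K(\zeta_{m_i})$; checking that strong disjointness is exactly the hypothesis guaranteeing this multiplicativity of complements is the delicate structural point. On the other hand, in the single-generator core the subtle step is descending the root/eigenvalue condition from $\widetilde{L}$ to $K$: showing that $J$-invariance of $\alpha^d$ together with $L\cap M=K$ upgrades $\alpha^d\in M$ to $\alpha^d\in K$ and, via minimality of $n$, forces the Kummer exponent $[\widetilde{L}:M]$ to equal $n$ rather than a proper divisor. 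Everything else, namely the counit computation and the compositum-of-Kummer bookkeeping, is routine once these two matchings are secured.
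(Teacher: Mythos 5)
Your $k=1$ core and your implication \eqref{mainthm11}$\Rightarrow$\eqref{mainthm12} are sound and essentially the paper's route: your ``bridge fact'' ($\alpha$ is an $H$-eigenvector iff it is a $J$-eigenvector with eigenvalues in $M$, the eigenvalues descending to $K$ via $L\cap M=K$) is exactly the mechanism of Propositions \ref{pro:root} and \ref{pro:pureisalmostcyclic}, using $H=M[J]^{G'}$ and $M\otimes_KH=M[J]$; and your passage to general $k$ in that direction (apply the single-generator case to each $L_i$ with $m_i\mid n$, $n=\mathrm{lcm}_i(m_i)$, $M=\prod_iK(\zeta_{m_i})$, then the product structure and the product-of-eigenvectors lemma) is Proposition \ref{pro:radicalisalmostkummer}, resting on Propositions \ref{pro:prodalmostclassical} and Lemma \ref{lem:prodeighopf}. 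Your monomial-eigenvector argument via Hilbert 90 for the $k=1$ converse is a harmless variant of the paper's use of Proposition \ref{charactkummercyclic}.

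The genuine gap is in your factorwise reduction for \eqref{mainthm12}$\Rightarrow$\eqref{mainthm11}. In that direction you define $m_i$ as ``the order of $\alpha_i$'', but the existence of any $m_i$ with $\alpha_i^{m_i}\in K$ is part of the conclusion, so the definition is circular; worse, the step ``$M=K(\zeta_{m_1})\cdots K(\zeta_{m_k})$, whence each $M_i=K(\zeta_{m_i})$'' is a non sequitur: Galois complements are not unique and are not determined by their compositum, and the standing hypothesis only supplies \emph{some} complements $M_i'$ witnessing strong disjointness, with no guarantee that $M_1'\cdots M_k'=M$ or that the tensor decomposition of the given $H$ (the structure corresponding to $M$) uses the structures corresponding to $K(\zeta_{m_i})$. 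You flag this as ``the delicate structural point'' but never resolve it, and without it you cannot legitimately invoke Theorem \ref{thm:charactpure}\eqref{thmpure2}$\Rightarrow$\eqref{thmpure1} on the factors. The repair is to abandon the decomposition in this direction and run your own bridge fact globally, which is what the paper does in Proposition \ref{pro:kummerroot}: base-change to $M$ turns $H$ into the classical structure $M[J]$ on $\widetilde{L}/M$, so $\{\alpha_1,\dots,\alpha_k\}$ is a generating set of $J$-eigenvectors for $\widetilde{L}=M(\alpha_1,\dots,\alpha_k)$ with $J$ abelian of exponent $n$; Theorem \ref{charactkummereig} then gives $\alpha_i^n\in M$ with $n$ minimal, and $L\cap M=K$ descends both the radical condition and its minimality to $K$. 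Note that this direction needs no strong disjointness at all, so the product machinery you deploy there is not only unjustified but unnecessary.
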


In order to prove Theorem \ref{maintheorem1}, we shall show that the tensor product of Hopf-Galois structures on two strongly disjoint almost classically Galois extensions $L_1/K$ and $L_2/K$ is a Hopf-Galois structure on their compositum $L/K$, which we call the product Hopf-Galois structure of those. Namely, if $H_i$ is a Hopf-Galois structure on $L_i/K$ for $i\in\{1,2\}$, one can construct a Hopf-Galois structure $H$ on $L_1L_2/K$, in such a way that $H\cong H_1\otimes_KH_2$ as $K$ algebras. This notion can be seen as an analogue for almost classically Galois extensions of the induced Hopf-Galois structures introduced by Crespo, Rio and Vela \cite{cresporiovela}. However, it is not a generalization nor a particular case of those: they apply to different pairs of extensions, except in the case of Galois extensions, for which both notions coincide.

What the last sentence of Theorem \ref{maintheorem1} means is that, within the strongly decomposable extensions of $K$, there is a bijective correspondence between radical extensions $L/K$ with $L\cap K(\zeta_n)=K$ and almost Kummer extensions with complement $K(\zeta_n)$ that are $H$-Kummer. Among these, simple radical extensions $L/K$ with $L\cap K(\zeta_n)=K$ correspond to almost cyclic extensions with complement $K(\zeta_n)$ that are $H$-cyclic. This can be seen as a direct generalization of the well known correspondence in classical Kummer theory.

We will investigate the module structure of $H$-Kummer extensions with the point of view of Hopf-Galois theory. For an $H$-Galois field extension $L/K$, we assume that $K$ is the field of fractions of some Dedekind domain $\mathcal{O}_K$ and we write $\mathcal{O}_L$ for the integral closure of $\mathcal{O}_K$ in $L$ (for instance, this holds when $L/K$ is an extension of number or $p$-adic fields). In short, we will say that $L/K$ is an extension \textit{with associated rings of integers}. Under this situation, the associated order in $H$ is defined as the set $\mathfrak{A}_H$ of elements of $H$ whose action on $L$ leave the ring of integers $\mathcal{O}_L$ invariant. The associated order is an $\mathcal{O}_K$-order in $H$ and $\mathcal{O}_L$ is naturally endowed with $\mathfrak{A}_H$-module structure. Under the assumption that $\mathcal{O}_L$ is $\mathcal{O}_K$-free, we have that if $\mathcal{O}_L$ is $\mathfrak{A}_H$-free, then it has a single generator. If $\mathfrak{A}$ is an $\mathcal{O}_K$-order in $H$ such that $\mathcal{O}_L$ is $\mathfrak{A}$-free, then $\mathfrak{A}=\mathfrak{A}_H$. However, in general $\mathcal{O}_L$ is not $\mathfrak{A}_H$-free, and it is interesting to find criteria in order to determine the $\mathfrak{A}_H$-freeness of such an extension. The study of these questions is often called Hopf-Galois module theory.

This is a natural generalization of the situation of a Galois tamely ramified extension $L/K$ with group $G$, for which, in the $p$-adic case, having a normal integral basis is equivalent to $\mathcal{O}_L$ being $\mathcal{O}_K[G]$-free, and for wildly ramified extensions we consider instead the associated order in $K[G]$. This line has been explored almost exclusively for tamely ramified extensions, both in the Galois case \cite{gomezayala,ichimura2004,delcorsorossi2010,delcorsorossi2013} and in the Hopf-Galois one \cite{truman2020}. Rio and the author \cite{gilrioinduced} introduced a method to study this kind of questions for $H$-Galois extensions $L/K$ from the knowledge of the action of $H$ on an $\mathcal{O}_K$-basis of $\mathcal{O}_L$, and it can be applied to wildly ramified extensions as well. In our situation, we will obtain the following:

\begin{teo}\label{maintheorem2} Let $L/K$ be an $H$-Kummer extension with associated rings of integers and assume that $L/K$ admits some basis of $H$-eigenvectors $B=\{\gamma_j\}_{j=1}^{n}$ which is also an $\mathcal{O}_K$-basis of $\mathcal{O}_L$. Let $W=\{w_i\}_{i=1}^n$ be a $K$-basis of $H$. Write $w_i\cdot\gamma_j=\lambda_{ij}\gamma_j$ with $\lambda_{ij}\in K$ for all $1\leq i,j\leq n$ and let $\Omega=(\omega_{ij})_{i,j=1}^n$ be the inverse of the matrix $\Lambda=(\lambda_{ji})_{i,j=1}^n$. Then:
\begin{itemize}
    \item[1.] An $\mathcal{O}_K$-basis of $\mathfrak{A}_H$ is given by the elements $v_i=\sum_{l=1}^n\omega_{li}w_l$, $1\leq i\leq n$. Moreover, they form a system of primitive pairwise orthogonal idempotents.
    \item[2.] $\mathcal{O}_L$ is $\mathfrak{A}_H$-free and a generator is any element $\beta=\sum_{j=1}^n\beta_j\gamma_j$ such that $\beta_j\in\mathcal{O}_K^*$ for every $1\leq j\leq n$.
\end{itemize}
\end{teo}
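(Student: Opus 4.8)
The plan is to exploit the fact that, because every $\gamma_j$ is an $H$-eigenvector, each element of $H$ acts diagonally in the basis $B$, so that the whole action is simultaneously diagonalized and $H$ becomes a split commutative algebra. Concretely, I would first introduce the eigenvalue map $\mu\colon H\to K^n$ sending $h$ to the tuple $(\mu_1(h),\dots,\mu_n(h))$ determined by $h\cdot\gamma_j=\mu_j(h)\gamma_j$; by $K$-linearity of the action this is a homomorphism of $K$-algebras, and on the basis $W$ it is given by the rows of $\Lambda$, namely $\mu(w_i)=(\lambda_{i1},\dots,\lambda_{in})$. The key preliminary point is that $\mu$ is an isomorphism: if some nonzero $h$ acted as zero on every $\gamma_j$ it would annihilate all of $L$, contradicting the injectivity of the canonical map $j\colon L\otimes_KH\to\mathrm{End}_K(L)$ that is part of the Hopf-Galois hypothesis; hence $\mu$ is injective, and since $\dim_KH=n$ it is an isomorphism $H\cong K^n$. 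In particular $\Lambda\in GL_n(K)$, which is exactly what makes $\Omega=\Lambda^{-1}$ meaningful.

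Next I would transport the associated order through this isomorphism. Since $B$ is an $\mathcal{O}_K$-basis of $\mathcal{O}_L$ and each $h\in H$ acts $\mathcal{O}_K$-linearly, one has $h\cdot\mathcal{O}_L\subseteq\mathcal{O}_L$ if and only if $h\cdot\gamma_j\in\mathcal{O}_L$ for all $j$, i.e. if and only if $\mu_j(h)\in\mathcal{O}_K$ for all $j$. Thus $\mu$ identifies $\mathfrak{A}_H$ with $\mathcal{O}_K^n\subseteq K^n$ endowed with its standard product structure. Transcribing the standard idempotents back through $\Omega=\Lambda^{-1}$, the elements $v_i=\sum_{l}\omega_{li}w_l$ are precisely the preimages of the coordinate vectors, so that $v_i\cdot\gamma_j=\delta_{ij}\gamma_j$ and $\mu(v_i)=e_i$, the $i$-th standard basis vector of $\mathcal{O}_K^n$. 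Because $\mathcal{O}_K$ is a domain its only idempotents are $0$ and $1$, whence the idempotents of $\mathcal{O}_K^n$ are exactly the $0/1$-tuples; consequently the $e_i$ are precisely the primitive pairwise orthogonal idempotents and form an $\mathcal{O}_K$-basis. Pulling this back through $\mu$ yields Part 1, namely that $\{v_i\}_{i=1}^n$ is a system of primitive pairwise orthogonal idempotents that is an $\mathcal{O}_K$-basis of $\mathfrak{A}_H$.

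For Part 2 I would use these idempotents to build an explicit isomorphism. Given $\beta=\sum_j\beta_j\gamma_j$ with every $\beta_j\in\mathcal{O}_K^*$, the relation $v_i\cdot\gamma_j=\delta_{ij}\gamma_j$ gives $v_i\cdot\beta=\beta_i\gamma_i$. Since the $\beta_i$ are units, $\{\beta_i\gamma_i\}_{i=1}^n$ is again an $\mathcal{O}_K$-basis of $\mathcal{O}_L$; hence the $\mathfrak{A}_H$-linear map $\mathfrak{A}_H\to\mathcal{O}_L$, $h\mapsto h\cdot\beta$, carries the $\mathcal{O}_K$-basis $\{v_i\}$ of $\mathfrak{A}_H$ onto the $\mathcal{O}_K$-basis $\{\beta_i\gamma_i\}$ of $\mathcal{O}_L$, and is therefore an $\mathcal{O}_K$-linear, hence $\mathfrak{A}_H$-linear, isomorphism. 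This exhibits $\mathcal{O}_L$ as $\mathfrak{A}_H$-free of rank one with generator $\beta$.

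I would expect the only genuinely delicate points to be the two structural reductions rather than any computation: justifying that $\mu$ is injective directly from the Hopf-Galois axiom (so that $\Lambda$ is invertible and the identification $H\cong K^n$ is available), and checking that the integrality condition defining $\mathfrak{A}_H$ matches $\mathcal{O}_K^n$ coordinatewise, which rests on $B$ being simultaneously an eigenbasis and an integral basis. Once $H\cong K^n$ is in place everything else is linear algebra over the split algebra, and the formula $v_i=\sum_l\omega_{li}w_l$ is merely the inverse-matrix transcription of the coordinate projectors.
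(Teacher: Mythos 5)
Your argument is correct and reaches the paper's conclusions by what is, at bottom, the same computation: the diagonal relations $v_i\cdot\gamma_j=\delta_{ij}\gamma_j$, the observation that $h\cdot\mathcal{O}_L\subseteq\mathcal{O}_L$ amounts to coordinatewise integrality of the eigenvalues of $h$ on the integral eigenbasis $B$, and the identity $v_i\cdot\beta=\beta_i\gamma_i$. The difference is one of packaging. The paper works entirely with change-of-basis matrices ($\Lambda_V=\Lambda\Omega=\mathrm{Id}_n$) and then verifies idempotency, orthogonality and primitivity by direct computation of $(v_iv_j)\cdot\gamma_k$ together with the injectivity of $\rho_H$; you instead promote the eigenvalue map $\mu\colon H\to K^n$ to a $K$-algebra isomorphism and transport everything through it, so that $\mathfrak{A}_H\cong\mathcal{O}_K^n$ and the $v_i$ become the standard idempotents. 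The injectivity input is identical in both versions (your injectivity of $\mu$ is exactly the paper's injectivity of $\rho_H$ restricted to the eigenbasis), but your formulation buys two small things: it justifies that $\Lambda$ is invertible, which the theorem statement takes for granted, and it makes primitivity of the idempotents transparent from the structure of idempotents in $\mathcal{O}_K^n$ over a domain, where the paper's proof is terser on that point. One shared caveat: with the literal index conventions of the statement ($\Lambda_{ij}=\lambda_{ij}$, $\Omega=\Lambda^{-1}$, $v_i=\sum_l\omega_{li}w_l$) the eigenvalue of $v_i$ on $\gamma_j$ comes out as $\sum_l\omega_{li}\lambda_{lj}$, which is an entry of $\Omega^{T}\Lambda$ rather than of $\Omega\Lambda$; both you and the paper silently read the indices so that this is $\delta_{ij}$, which requires either $v_i=\sum_l\omega_{il}w_l$ or $\Omega=(\Lambda^{T})^{-1}$. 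This is a transposition convention, not a gap in the substance, but it is worth fixing when writing the argument out.
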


It is possible to obtain some criteria for the freeness for radical extensions of number or $p$-adic fields. We will follow this strategy: From Theorem \ref{maintheorem1} we know how to construct radical extensions $L/K$ that are $H$-Kummer, which have some $K$-basis of $H$-eigenvectors. Then we will add sufficient conditions to ensure the existence of an integral $K$-basis of $H$-eigenvectors, so that we can apply Theorem \ref{maintheorem2}. 

These extra conditions will depend on the nature of the fields involved. If we are working with extensions of number fields, the existence of integral generators that are $n$-th roots of elements in $K$ is enough. This is related with the monogeneity of $L/K$, which has been widely studied in literature. In the case of $p$-adic fields, we will restrict to simple radical extensions, and in that case we will need some uniformizer of $L$ that is an $n$-th root of some element in $K$. Using these considerations, we will prove that a Hopf-Galois prime degree extension with maximally ramified normal closure accomplishes the freeness property over the associated order.

This paper is organized as follows. Section \ref{sectprelim} includes the main background knowledge on Hopf-Galois theory required to follow the main ideas of this paper, namely the notion of Hopf-Galois structure, Hopf-Galois theory on separable extensions (specifically, Greither-Pareigis theory) and Hopf-Galois module theory. In Section \ref{sectprodhg} we will investigate the compositums of almost classically Galois extensions. We will introduce the notion of strong disjointness and define the product Hopf-Galois structure on a compositum of strongly disjoint almost classically Galois extensions. Section \ref{kummergalois} will be devoted to a review of basic results concerning Kummer Galois extensions and their cyclic subextensions, as well as a proof of the characterization of the Kummer condition in terms of the Galois action. In Section \ref{sect:kummerhopfgalois} we will introduce the notion of $H$-eigenvector and $H$-Kummer extension. The aim of Section \ref{sect:corresprad} will essentially be to prove Theorem \ref{maintheorem1}, and we will extract some consequences. Finally, in Section \ref{sect:modstr} we will consider the problem of the module structure of $\mathcal{O}_L$ described above, and we will prove Theorem \ref{maintheorem2}.

\section{Preliminaries}\label{sectprelim}

\subsection{Hopf-Galois structures and Greither-Pareigis theory}\label{secthgtheory}

Let $L/K$ be a finite extension of fields. A Hopf-Galois structure on $L/K$ is a pair $(H,\cdot)$ where $H$ is a finite-dimensional cocommutative $K$-Hopf algebra and $\cdot$ is a $K$-linear action of $H$ on $L$ such that:
\begin{itemize}
    \item The action $\cdot$ endows $L$ with $H$-module algebra structure, that is, the following conditions are satisfied for every $h\in H$: $$h\cdot1=\epsilon_H(h),$$ $$h\cdot(xx')=\sum_{(h)}(h_{(1)}\cdot x)(h_{(2)}\cdot x'),\quad x,x'\in L,$$ where $\epsilon_H\colon H\longrightarrow K$ is the counity of $H$ and the comultiplication $\Delta_H\colon H\longrightarrow H\otimes_K H$ of $H$ satisfies $\Delta_H(h)=\sum_{(h)}h_{(1)}\otimes h_{(2)}$.
    \item The canonical map $j\colon L\otimes_KH\longrightarrow\mathrm{End}_K(L)$ defined by $j(x\otimes h)(y)=x(h\cdot y)$ for every $y\in L$ is an isomorphism of $K$-vector spaces.
\end{itemize} A Hopf-Galois extension is a finite extension $L/K$ that admits some Hopf-Galois structure. For the sake of simplicity, we will denote a Hopf-Galois structure simply by $H$ (so that the action $\cdot$ is implicit in the context). We will usually say that $L/K$ is $H$-Galois.

If $L/K$ is a Galois extension with group $G$, then $K[G]$ together with its Galois action on $L$ (extended by $K$-linearity) is a Hopf-Galois structure on $L/K$, commonly referred to as the classical Galois structure. There are many Hopf-Galois extensions that are not Galois, for instance $\mathbb{Q}(\sqrt[3]{2})/\mathbb{Q}$.

A single Hopf-Galois extension may admit different Hopf-Galois structures. In the case that the extension $L/K$ is separable, Greither and Pareigis \cite[Theorem 2.1]{greitherpareigis} found a characterization of the Hopf-Galois structures in terms of group theory. Concretely, let $\widetilde{L}$ be the normal closure of $L/K$, $G=\mathrm{Gal}(\widetilde{L}/K)$, $G'=\mathrm{Gal}(\widetilde{L}/L)$ and $X=G/G'$. 

\begin{teo}[Greither-Pareigis theorem]\label{thm:greitherpareigis} The Hopf-Galois structures on $L/K$ are in bijective correspondence with the regular subgroups of $\mathrm{Perm}(X)$ normalized by $\lambda(G)$.
\end{teo}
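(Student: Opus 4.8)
The plan is to prove the correspondence by faithfully flat (Galois) descent from the normal closure $\widetilde{L}$, where the Hopf-Galois condition becomes completely explicit. First I would record the standard identification coming from separability: viewing each coset $x\in X=G/G'$ as a $K$-embedding $L\hookrightarrow\widetilde{L}$, the map $a\otimes b\mapsto (a\,x(b))_{x\in X}$ is an isomorphism of $\widetilde{L}$-algebras
$$\widetilde{L}\otimes_K L\;\xrightarrow{\ \sim\ }\;\mathrm{Map}(X,\widetilde{L})=\prod_{x\in X}\widetilde{L}.$$
The group $G=\mathrm{Gal}(\widetilde{L}/K)$ acts on the left-hand side through the first factor, and this transports to the diagonal action $(g\cdot f)(x)=g\bigl(f(\lambda(g)^{-1}x)\bigr)$ on $\mathrm{Map}(X,\widetilde{L})$, whose ring of invariants is $L$ (via evaluation at the base coset $G'$). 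Applying $\widetilde{L}\otimes_K(-)$ to a Hopf-Galois structure $(H,\cdot)$ turns it into one on $\mathrm{Map}(X,\widetilde{L})/\widetilde{L}$, and since base change along the faithfully flat extension $\widetilde{L}/K$ reflects isomorphisms, the canonical map $j$ is an isomorphism if and only if its base change is.

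The second step is the classification over the split base $\widetilde{L}$. Here I would argue that a cocommutative $\widetilde{L}$-Hopf algebra $H_{\widetilde{L}}$ of dimension $n=|X|$ making $\mathrm{Map}(X,\widetilde{L})$ a Hopf-Galois extension must be a group algebra $\widetilde{L}[N]$: working over the splitting field, the grouplike elements form a group $N$ of order $n$, and the module-algebra and bijectivity axioms force $N$ to act on $\mathrm{Map}(X,\widetilde{L})$ through a permutation of the coordinates, i.e. through an embedding $N\hookrightarrow\mathrm{Perm}(X)$. Unwinding the condition that $j$ be bijective then amounts exactly to $N$ acting \emph{regularly} (simply transitively) on $X$; conversely every regular $N\le\mathrm{Perm}(X)$ yields such a structure, with $N$ acting by $(n\cdot f)(x)=f(n^{-1}x)$. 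This produces a bijection between Hopf-Galois structures on $\mathrm{Map}(X,\widetilde{L})/\widetilde{L}$ and regular subgroups of $\mathrm{Perm}(X)$.

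The final and decisive step is to descend this picture back to $K$ and pin down which regular $N$ arise from genuine $K$-structures. The descent datum attached to $(H,\cdot)$ is a semilinear $G$-action on $\widetilde{L}[N]$ compatible with the Hopf structure and with the $G$-action on $\mathrm{Map}(X,\widetilde{L})$; since $G$ acts on the coordinates through $\lambda$, compatibility forces conjugation by $\lambda(g)$ to carry the coordinate permutations in $N$ to themselves, that is, $\lambda(G)$ must normalize $N$. Conversely, given a regular $N$ normalized by $\lambda(G)$, conjugation by $\lambda(G)$ defines a $G$-action on $N$, hence a semilinear $G$-action on $\widetilde{L}[N]$, and I would set $H=\widetilde{L}[N]^G$; faithfully flat descent then guarantees that $H$ is a $K$-Hopf algebra with $\widetilde{L}\otimes_K H\cong\widetilde{L}[N]$ and that the descended action makes $L/K$ an $H$-Galois extension. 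Checking that the two assignments are mutually inverse completes the proof.

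I expect the main obstacle to be the descent bookkeeping of the third step: one must verify that the $G$-action really does restrict to $N$ precisely under the normalization hypothesis, that all the Hopf-algebra axioms (comultiplication, counit, antipode) together with the module-algebra structure descend simultaneously and compatibly, and that the resulting $H$ is independent of the choices made (base point in $X$, identification $X\cong N$). The second step's identification of a split cocommutative Hopf algebra with a group algebra also requires care to ensure $N$ genuinely embeds in $\mathrm{Perm}(X)$ rather than merely acting; both points are where the separability of $L/K$ and the Galois property of $\widetilde{L}/K$ are essential.
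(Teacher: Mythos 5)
Your proposal is correct and is essentially the only proof in play here: the paper does not prove this theorem but quotes it from Greither and Pareigis \cite[Theorem 2.1]{greitherpareigis}, and your sketch reconstructs their original descent argument, with your third step producing exactly the objects the paper records immediately after the statement, namely $H=\widetilde{L}[N]^G$ with $G$ acting on $\widetilde{L}$ by evaluation and on $N$ by conjugation through $\lambda(G)$, and the action $h\cdot\alpha=\sum_{i}h_i\eta_i^{-1}(1_GG')(\alpha)$. The one place where more than bookkeeping is required is the point you yourself flag in step two --- that the base-changed Hopf algebra is already split over $\widetilde{L}$, not merely over a separable closure --- and this is supplied by the Galois condition itself: dualizing $j$ identifies $\mathrm{Map}(X,\widetilde{L})\otimes_{\widetilde{L}}H_{\widetilde{L}}^{*}$ with $\mathrm{Map}(X,\widetilde{L})\otimes_{\widetilde{L}}\mathrm{Map}(X,\widetilde{L})\cong\mathrm{Map}(X\times X,\widetilde{L})$, which forces the commutative dual $H_{\widetilde{L}}^{*}$ to be split \'etale over $\widetilde{L}$, hence $H_{\widetilde{L}}$ is a group algebra on exactly $n=|X|$ grouplikes, as your argument needs.
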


In this statement, we say that a subgroup $N$ of $\mathrm{Perm}(X)$ is:
\begin{itemize}
    \item Regular, if its action on $X$ by evaluation is simply transitive.
    \item Normalized by $\lambda(G)$, if $\lambda(g)\eta\lambda(g)^{-1}\in N$ for every $g\in G$, where $\lambda\colon G\longrightarrow\mathrm{Perm}(X)$, defined by $\lambda(g)(hG')=ghG'$, is the left translation map of $L/K$.
\end{itemize} Moreover, there is an isomorphism of $\widetilde{L}$-algebras $\widetilde{L}\otimes_KH\longrightarrow\widetilde{L}[N]$, and by descent theory this gives $$H=\widetilde{L}[N]^G=\{h\in\widetilde{L}[N]\,|\,g(h)=h\hbox{ for all }g\in G\},$$ where $G$ acts by evaluation on $\widetilde{L}$ and by conjugation by $\lambda(G)$ on $N$. The action of $H$ on $L$ is as follows: an element $h=\sum_{i=1}^nh_i\eta_i\in H$ with $h_i\in\widetilde{L}$ and $\eta_i\in N$ acts on $\alpha\in L$ by \begin{equation}\label{hopfactiongp}
    h\cdot\alpha=\sum_{i=1}^nh_i\eta_i^{-1}(1_GG')(\alpha).
\end{equation}

In the case that $L/K$ is Galois, we have that $G'$ is trivial, so the left translation map becomes $\lambda\colon G\longrightarrow\mathrm{Perm}(G)$, the left regular representation of $G$ in $\mathrm{Perm}(G)$. Let $\rho\colon G\longrightarrow\mathrm{Perm}(G)$ be defined as $\rho(g)(g')=g'g^{-1}$. Then $\rho(G)$ and $\lambda(G)$ are regular subgroups both normalized by $\lambda(G)$, giving rise to Hopf-Galois structures on $L/K$. The one given by $\rho(G)$ is the classical Galois structure on $L/K$ (see \cite[(6.10)]{childs}).

\subsubsection{Induced Hopf-Galois structures}\label{sect:inducedhgstr}

The notion of induced Hopf-Galois structure was originally introduced by Crespo, Rio and Vela \cite{cresporiovela}. Let $L/K$ be a Galois extension with group of the form $G=J\rtimes G'$ with $J$ normal in $G$, and let $E=L^{G'}$. The induction theorem essentially states that under these hypothesis, we can construct a Hopf-Galois structure on $E/K$ from Hopf-Galois structures on $E/K$ and $L/E$ by carrying out the direct product of the corresponding permutation subgroups under the Greither-Pareigis correspondence (see \cite[Theorem 3]{cresporiovela}). We present here the reformulation of this notion by Rio and the author \cite[Section 5]{gilrioinduced}.

First of all, by the Greither-Pareigis theorem, the Hopf-Galois structures on $L/E$ are in one-to-one correspondence with the regular subgroups of $\mathrm{Perm}(G')$ normalized by the image of the left regular representation $\lambda'\colon G'\longrightarrow\mathrm{Perm}(G')$. On the other hand, in order to describe the Hopf-Galois structures on $E/K$, it can be proved that we can use the Greither-Pareigis theorem as if $L=\widetilde{E}$, even if it is not (in general, $\widetilde{E}\subseteq L$). Hence, the Hopf-Galois structures on $E/K$ correspond bijectively to the regular subgroups of $\mathrm{Perm}(G/G')$ normalized by $\overline{\lambda}(G)$, where $\overline{\lambda}\colon G\longrightarrow\mathrm{Perm}(G/G')$, $\overline{\lambda}(g)(hG')=ghG'$. Now, since $G=J\rtimes G'$, $J$ is a transversal of $G/G'$, and then $\mathrm{Perm}(G/G')\cong\mathrm{Perm}(J)$, which yields a map $\lambda_c\colon G\longrightarrow\mathrm{Perm}(J)$. On this way, the Hopf-Galois structures on $E/K$ correspond bijectively to the regular subgroups of $\mathrm{Perm}(J)$ normalized by $\lambda_c(G)$.

Finally, the Hopf-Galois structures on $L/K$ are in bijective correspondence with the regular subgroups of $\mathrm{Perm}(G)$ normalized by the image of the left regular representation $\lambda\colon G\longrightarrow\mathrm{Perm}(G)$. It can be checked that $\lambda=\iota\circ\chi$, where $\iota$ and $\chi$ are the group homomorphisms given by \begin{equation}\label{eq:declambda}\begin{array}{rcccr}
    \chi\colon & G & \longrightarrow & \mathrm{Perm}(J)\times\mathrm{Perm}(G') &\\
     & \sigma\tau & \longmapsto & (\lambda_c(\sigma\tau),\lambda'(\tau)), &\\
     \\
    \iota\colon & \mathrm{Perm}(J)\times\mathrm{Perm}(G') & \longrightarrow & \mathrm{Perm}(G)&\\
     & (\varphi,\psi) & \longmapsto & \sigma\tau\mapsto\varphi(\sigma)\psi(\tau).&
\end{array}\end{equation} Now, induced Hopf-Galois structures are introduced as follows:

\begin{pro}\label{pro:ngivesinduced} If $N_1\leq\mathrm{Perm}(J)$ gives $E/K$ a Hopf-Galois structure and $N_2\leq\mathrm{Perm}(G')$ gives $L/E$ a Hopf-Galois structure, then $N=\iota(N_1\times N_2)$ gives $L/K$ a Hopf-Galois structure, which is called induced.
\end{pro}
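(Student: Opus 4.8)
The plan is to verify the two hypotheses of the Greither--Pareigis theorem (Theorem~\ref{thm:greitherpareigis}): since $L/K$ is Galois with group $G$, it suffices to prove that $N=\iota(N_1\times N_2)$ is a regular subgroup of $\mathrm{Perm}(G)$ that is normalized by $\lambda(G)$. Throughout I would exploit the factorization $\lambda=\iota\circ\chi$ and the fact, recorded in \eqref{eq:declambda}, that $\iota$ and $\chi$ are group homomorphisms, together with the regularity and normalization properties of $N_1$ and $N_2$ coming from their being Hopf--Galois data for $E/K$ and $L/E$ respectively.

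First I would note that $N$ is a subgroup of $\mathrm{Perm}(G)$: as $\iota$ is a homomorphism and $N_1\times N_2$ is a subgroup of $\mathrm{Perm}(J)\times\mathrm{Perm}(G')$, its image $N$ is a subgroup. For regularity I would consider the orbit map $N\to G$, $n\mapsto n(1_G)$, where under $G=J\rtimes G'$ we write $1_G=1_J1_{G'}$. By the definition of $\iota$, precomposing this orbit map with $\iota|_{N_1\times N_2}$ gives $(\varphi,\psi)\mapsto\varphi(1_J)\psi(1_{G'})$. This last map is a bijection $N_1\times N_2\to G$, being the composite of $(\varphi,\psi)\mapsto(\varphi(1_J),\psi(1_{G'}))$ --- bijective since $N_1$ and $N_2$ are regular --- with the multiplication map $J\times G'\to G$, which is bijective because $G=J\rtimes G'$. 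Moreover $\iota$ is injective on $N_1\times N_2$ by uniqueness of the $J\rtimes G'$ factorization, so $\iota|_{N_1\times N_2}$ is a bijection onto $N$; hence the orbit map $N\to G$ is itself a bijection, which is precisely regularity of $N$.

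It remains to show $N$ is normalized by $\lambda(G)$. Fix $g=\sigma\tau\in G$ with $\sigma\in J$, $\tau\in G'$, and $n=\iota(\varphi,\psi)\in N$ with $\varphi\in N_1$, $\psi\in N_2$. Using $\lambda=\iota\circ\chi$ and the homomorphism property of $\iota$, I would rewrite $\lambda(g)\,n\,\lambda(g)^{-1}=\iota\bigl(\chi(g)(\varphi,\psi)\chi(g)^{-1}\bigr)$. Since $\chi(g)=(\lambda_c(g),\lambda'(\tau))$, the conjugation splits componentwise into $\bigl(\lambda_c(g)\varphi\lambda_c(g)^{-1},\,\lambda'(\tau)\psi\lambda'(\tau)^{-1}\bigr)$. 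The first component lies in $N_1$ because $N_1$ is normalized by $\lambda_c(G)$, and the second lies in $N_2$ because $N_2$ is normalized by $\lambda'(G')$ and $\tau\in G'$. Hence $\lambda(g)\,n\,\lambda(g)^{-1}\in\iota(N_1\times N_2)=N$, and Greither--Pareigis then yields the induced Hopf--Galois structure.

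The main obstacle, and the step I would treat most carefully, is the normalization: the whole argument rests on the conjugation decoupling into the $\mathrm{Perm}(J)$-slot and the $\mathrm{Perm}(G')$-slot. This decoupling is exactly what the homomorphism $\iota$ and the shape of $\chi$ provide, and the key subtlety is that the second slot only ever involves $\lambda'(\tau)$ for $\tau\in G'$, so that the hypothesis that $N_2$ is normalized by $\lambda'(G')$ (and not by anything larger) already suffices. Once $\lambda=\iota\circ\chi$ is in hand, both Greither--Pareigis conditions for $N$ reduce almost formally to the corresponding conditions for $N_1$ and $N_2$.
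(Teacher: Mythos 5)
Your argument is correct. Note first that the paper does not actually prove this proposition: it is imported from Crespo--Rio--Vela (and the statement as printed even contains a typo, with the roles of $L/K$ and $E/K$ swapped --- $N_1$ gives $E/K$ a Hopf--Galois structure and $N=\iota(N_1\times N_2)$ gives one on $L/K$, which is the reading you correctly adopt). The closest the paper comes to a proof is Proposition~\ref{pro:prodhgstrsubg} for the product construction, and your proof follows essentially the same two-step strategy used there: regularity of $N$, and normalization via the factorization $\lambda=\iota\circ\chi$ with conjugation decoupling componentwise. The only genuine difference is in the regularity step: the paper counts, $|N|=|N_1||N_2|=|G|$, and then checks transitivity directly, whereas you show the orbit map $n\mapsto n(1_G)$ is a bijection by factoring it through $\iota|_{N_1\times N_2}$ and the bijection $J\times G'\to G$; both are valid, and yours has the small advantage of making the injectivity of $\iota$ explicit. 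Your emphasis on the key subtlety --- that the second slot of $\chi(g)$ only ever involves $\lambda'(\tau)$ with $\tau\in G'$, so normalization of $N_2$ by $\lambda'(G')$ alone suffices --- is exactly the point on which the construction hinges.
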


Actually, the Hopf-Galois structures on $L/E$ are in bijective correspondence with the ones of $F/K$, where $F=L^J$ (see \cite[Proposition 5.3]{gilrioinduced}), so an induced Hopf-Galois structure on $L/K$ can be built equivalently from Hopf-Galois structures on $E/K$ and $F/K$. This point of view is more convenient in order to study the underlying Hopf algebra and the underlying action on the induced Hopf-Galois structure.

\begin{pro}\label{pro:prodindhgstr} Let $H$ be an induced Hopf-Galois structure on $L/K$ from Hopf-Galois structures $H_1$ on $E/K$ and $H_2$ on $F/K$. Then:
\begin{itemize}
    \item[1.]\cite[Proposition 5.5]{gilrioinduced} $H\cong H_1\otimes_KH_2$ as $K$-algebras.
    \item[2.]\cite[Proposition 5.8]{gilrioinduced} If $h_i\in H_i$ and $\alpha_i\in L_i$ for $i\in\{1,2\}$, then $(h_1h_2)\cdot(\alpha_1\alpha_2)=(h_1\cdot \alpha_1)(h_2\cdot \alpha_2)$.
\end{itemize}
\end{pro}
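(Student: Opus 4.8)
The plan is to deduce both parts from the Greither--Pareigis descent presentations, together with the decomposition $\lambda=\iota\circ\chi$ of \eqref{eq:declambda} and the fact that $\iota$ is an injective group homomorphism with $\iota(N_1\times N_2)=N$. Since $L/K$ is Galois we have $\widetilde{L}=L$ and $X=G$, so $H=L[N]^{G}$ with $N\leq\mathrm{Perm}(G)$, while the correspondences recalled in Section \ref{sect:inducedhgstr} give $H_1=L[N_1]^{G}$ and $H_2=L[N_2]^{G}$, where $G$ acts by evaluation on $L$ and, on the permutation factors, by conjugation through $\lambda_c$ on $N_1\leq\mathrm{Perm}(J)$ and through $\lambda'$ on $N_2\leq\mathrm{Perm}(G')$, as dictated by $\chi$. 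For $H_2$ one notes that $J$ acts trivially on $N_2$, whence $L[N_2]^{J}=F[N_2]$ and $L[N_2]^{G}=F[N_2]^{G'}$ recovers the structure on $F/K$ via \cite[Proposition 5.3]{gilrioinduced}. I also record $E=L^{G'}$, $F=L^{J}$ and $L=EF$, so that $L_1=E$ and $L_2=F$.

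For part 1, because $\iota$ restricts to a group isomorphism $N_1\times N_2\xrightarrow{\sim}N$, the canonical identification of the group algebra of a direct product gives an $L$-algebra isomorphism $L[N_1]\otimes_{L}L[N_2]\xrightarrow{\sim}L[N]$ sending $\eta_1\otimes\eta_2$ to $\iota(\eta_1,\eta_2)$, and $\lambda=\iota\circ\chi$ makes it $G$-equivariant. Over $K$ I would build the comparison map directly: the factor inclusions $\eta_1\mapsto\iota(\eta_1,1)$ and $\eta_2\mapsto\iota(1,\eta_2)$ are $G$-equivariant with commuting images, so taking $G$-invariants and multiplying inside $H$ yields a $K$-algebra homomorphism $\Phi\colon H_1\otimes_KH_2\to H$, $h_1\otimes h_2\mapsto h_1h_2$. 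To prove $\Phi$ is an isomorphism I would base change along the faithfully flat extension $L/K$: using $L\otimes_KH\cong L[N]$ and $L\otimes_KH_i\cong L[N_i]$, the map $\mathrm{id}_L\otimes\Phi$ is identified with the group-algebra isomorphism above, so $\Phi$ is an isomorphism because $-\otimes_KL$ reflects isomorphisms.

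For part 2 I would compute with the action formula $h\cdot\alpha=\sum_i h_i\,\eta_i^{-1}(1_G)(\alpha)$ of Section \ref{secthgtheory}, the base point being $1_G$ as $X=G$. Writing $h_1=\sum_{\eta_1}a_{\eta_1}\eta_1$ and $h_2=\sum_{\eta_2}b_{\eta_2}\eta_2$, part 1 gives $h_1h_2=\sum_{\eta_1,\eta_2}a_{\eta_1}b_{\eta_2}\,\iota(\eta_1,\eta_2)$ since $\iota(\eta_1,1)\iota(1,\eta_2)=\iota(\eta_1,\eta_2)$. The definition of $\iota$ yields $\iota(\eta_1,\eta_2)^{-1}(1_G)=\eta_1^{-1}(1_J)\,\eta_2^{-1}(1_{G'})$; call this element $g\in G$, with $\eta_1^{-1}(1_J)\in J$ and $\eta_2^{-1}(1_{G'})\in G'$. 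Since $g$ acts as a field automorphism, $g(\alpha_1\alpha_2)=g(\alpha_1)g(\alpha_2)$, and the crux is then to separate the factors using $\alpha_1\in E=L^{G'}$ and $\alpha_2\in F=L^{J}$: because $\eta_2^{-1}(1_{G'})\in G'$ fixes $\alpha_1$ one gets $g(\alpha_1)=\eta_1^{-1}(1_J)(\alpha_1)$, and because $\eta_1^{-1}(1_J)\in J$ fixes $F$ one gets $g(\alpha_2)=\eta_2^{-1}(1_{G'})(\alpha_2)$. Substituting and factoring the double sum as a product of two single sums identifies the result with $(h_1\cdot\alpha_1)(h_2\cdot\alpha_2)$.

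I expect the descent step of part 1 to be the main obstacle, rather than the computation of part 2, which is bookkeeping once the representative $g$ is separated via the fixed-field conditions. The delicate points are the $G$-equivariance of the factor inclusions, which hinges entirely on $\lambda=\iota\circ\chi$; the compatibility of the base-change isomorphisms $L\otimes_KH_i\cong L[N_i]$ with $\Phi$; and, for $H_1$, the fact that the normal closure of $E/K$ may be properly contained in $L$, so that the Greither--Pareigis presentation must be used in the extended form recalled in Section \ref{sect:inducedhgstr}.
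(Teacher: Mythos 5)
Your argument is correct, but note a structural point first: this paper never proves Proposition \ref{pro:prodindhgstr} itself --- both items are imported from \cite[Propositions 5.5 and 5.8]{gilrioinduced}, and the paper instead gives a full proof only of the analogous statement for product Hopf--Galois structures, Proposition \ref{pro:prodhgstr}. Measured against that template, your core steps coincide: the comparison map $h_1\otimes h_2\mapsto h_1h_2$ built from $\iota$, the $G$-equivariance extracted from $\lambda=\iota\circ\chi$, and, for item 2, the evaluation $\iota(\eta_1,\eta_2)^{-1}(1_G)=\eta_1^{-1}(1_J)\,\eta_2^{-1}(1_{G'})$ followed by factoring the double sum. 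You diverge in two places, both legitimately. For bijectivity in item 1, the paper's proof of Proposition \ref{pro:prodhgstr} checks that the image lies in the $G$-invariants and then counts dimensions, whereas you base-change along the faithfully flat extension $L/K$ and identify $\mathrm{id}_L\otimes\Phi$ with the group-algebra isomorphism $L[N_1]\otimes_L L[N_2]\cong L[N]$; this buys injectivity and surjectivity in one stroke and avoids the paper's somewhat glossed injectivity check, at the price of the compatibility square you yourself flag, which does commute since $\iota(\eta_1,1)\iota(1,\eta_2)=\iota(\eta_1,\eta_2)$. In item 2 your semidirect-product setting forces the fixed-field separation $g(\alpha_1)=\eta_1^{-1}(1_J)(\alpha_1)$ and $g(\alpha_2)=\eta_2^{-1}(1_{G'})(\alpha_2)$, which in the paper's compositum setting is automatic from the componentwise action of $G\hookrightarrow G_1\times G_2$; your justification is sound, except that for $g(\alpha_2)$ you silently use that $F$ is $G$-stable, so that $\eta_2^{-1}(1_{G'})(\alpha_2)$ again lies in $F$ and is fixed by $\eta_1^{-1}(1_J)\in J$ --- this holds precisely because $J$ is normal in $G$, and deserves one explicit line. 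Your identifications $H_1=L[N_1]^{G}$ (via the extended Greither--Pareigis presentation for the non-normal $E/K$) and $H_2=L[N_2]^{G}=F[N_2]^{G'}$ (via the trivial $J$-action on $N_2$) are both consistent with Section \ref{sect:inducedhgstr}.
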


\subsection{Almost classically Galois extensions}\label{sect:almostclassic}

Almost classically Galois extensions were introduced also by Greither and Pareigis \cite[Section 4]{greitherpareigis}.

\begin{teo}\label{charactalmostclassical}\cite[Proposition 4.1]{greitherpareigis} Let $L/K$ be a separable extension and let $\widetilde{L}$ be its normal closure. Let $G=\mathrm{Gal}(L/K)$, $G'=\mathrm{Gal}(\widetilde{L}/L)$ and $X=G/G'$. Then, the following statements are equivalent:
	\begin{enumerate}
		\item\label{almostclassiccond1} There is some Galois extension $M/K$ such that $L\otimes_K M$ is a field that contains $\widetilde{L}$.
		\item\label{almostclassiccond2} There is some Galois extension $M/K$ such that $L\otimes_K M=\widetilde{L}$.
		\item\label{almostclassiccond3} There is some normal complement $J$ of $G'$ in $G$.
		\item\label{almostclassiccond4} There is a regular subgroup $N$ of $\mathrm{Perm}(X)$ normalized by $\lambda(G)$ such that $N\subset\lambda(G)$.
	\end{enumerate}
\end{teo}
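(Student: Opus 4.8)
The plan is to establish the cycle $(2)\Rightarrow(1)\Rightarrow(3)\Rightarrow(2)$ together with the separate equivalence $(3)\Leftrightarrow(4)$. Two standard facts will carry most of the weight. First, if $M/K$ is Galois then $L\otimes_K M$ is a field if and only if $L\cap M=K$, and in that case the multiplication map identifies $L\otimes_K M$ with the compositum $LM$ (this is linear disjointness of a Galois extension and an arbitrary one meeting it in $K$; equivalently, the number of field factors of $L\otimes_K M$ equals $[L\cap M:K]$). Second, under the Galois correspondence for $\widetilde{L}/K$, the compositum of two fixed fields corresponds to the intersection of the two subgroups, and the intersection of fixed fields to the subgroup they generate.

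The implication $(2)\Rightarrow(1)$ is immediate. For $(3)\Rightarrow(2)$, given a normal complement $J$ of $G'$ in $G$ I set $M=\widetilde{L}^{J}$; since $J\trianglelefteq G$, the extension $M/K$ is Galois. Using $G'J=G$ and $G'\cap J=\{1\}$ I compute $L\cap M=\widetilde{L}^{\langle G',J\rangle}=\widetilde{L}^{G}=K$ and $LM=\widetilde{L}^{G'\cap J}=\widetilde{L}$. The first fact then gives $L\otimes_K M\cong LM=\widetilde{L}$, which is $(2)$.

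For $(1)\Rightarrow(3)$, note that $L\otimes_K M$ being a field forces $L\cap M=K$ and $L\otimes_K M\cong LM$, so the hypothesis becomes $L\cap M=K$ together with $\widetilde{L}\subseteq LM$. I then pass to a finite Galois extension $\Omega/K$ containing $LM$ and set $\Gamma=\mathrm{Gal}(\Omega/K)$, $A=\mathrm{Gal}(\Omega/L)$, $B=\mathrm{Gal}(\Omega/M)$ and $C=\mathrm{Gal}(\Omega/\widetilde{L})$, so that $B,C\trianglelefteq\Gamma$, $C\subseteq A$, $G=\Gamma/C$ and $G'=A/C$. The two hypotheses translate into $AB=\Gamma$ (from $L\cap M=K$, using that $B$ is normal) and $A\cap B\subseteq C$ (from $\widetilde{L}\subseteq LM$). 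Setting $J=BC/C\trianglelefteq G$, a short computation shows $J\cap G'=\{1\}$ (if $a=bc\in A$ with $b\in B$, $c\in C\subseteq A$, then $b=ac^{-1}\in A\cap B\subseteq C$, whence $a\in C$) and $JG'=(ABC)/C=\Gamma/C=G$; thus $J$ is a normal complement of $G'$.

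Finally, $(3)\Leftrightarrow(4)$, which is the crux of the argument. The key observation is that because $\widetilde{L}$ is the \emph{normal closure} of $L/K$, the subgroup $G'$ is core-free, i.e. $\bigcap_{g\in G}gG'g^{-1}=\{1\}$ (this intersection fixes the compositum of all conjugates of $L$, which is $\widetilde{L}$); this is exactly the assertion that the left translation map $\lambda\colon G\to\mathrm{Perm}(X)$ is injective. Granting injectivity, from a normal complement $J$ I take $N=\lambda(J)$: it lies in $\lambda(G)$, it is normalized by $\lambda(G)$ since $J\trianglelefteq G$, and the bijection $J\to X$, $j\mapsto jG'$ (well defined and bijective because $G=JG'$ with $G'\cap J=\{1\}$) carries the $\lambda(J)$-action on $X$ to the left regular action of $J$ on itself, so $N$ is regular. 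Conversely, given such an $N$, regularity forces the stabilizer of the base point $1_GG'$ to be trivial, i.e. $N\cap\lambda(G')=\{1\}$; putting $J=\lambda^{-1}(N)$ (normal in $G$ because $N\trianglelefteq\lambda(G)$) and combining $N\cap\lambda(G')=\{1\}$ with injectivity of $\lambda$ yields $J\cap G'=\{1\}$, while $|N|=|X|=[G:G']$ gives $|JG'|=|G|$ and hence $JG'=G$. Thus $J$ is a normal complement. The main obstacle is precisely this equivalence: without the normal-closure hypothesis $\lambda$ need not be injective, and then $\lambda^{-1}(N)$ would be strictly larger than a complement, so the whole argument hinges on identifying core-freeness of $G'$ with injectivity of $\lambda$.
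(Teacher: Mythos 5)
Your proof is correct. The paper gives no proof of this statement at all — it is quoted directly from Greither and Pareigis \cite[Proposition 4.1]{greitherpareigis} — so there is no in-paper argument to compare against; your cycle $(2)\Rightarrow(1)\Rightarrow(3)\Rightarrow(2)$ (via linear disjointness of a Galois extension with an extension meeting it in $K$, and the Galois correspondence in an ambient Galois extension $\Omega\supseteq LM$) together with $(3)\Leftrightarrow(4)$ is essentially the standard argument from that reference. You also correctly isolate the one place where the normal-closure hypothesis is genuinely used, namely that the core of $G'$ in $G$ is trivial so that $\lambda$ is injective, which is what makes both directions of $(3)\Leftrightarrow(4)$ work.
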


\begin{defi} Let $L/K$ be a separable extension. We say that $L/K$ is \textbf{almost classically Galois} if it satisfies the equivalent conditions at Theorem \ref{charactalmostclassical}. An extension $M/K$ as in \ref{almostclassiccond2} will be called a Galois complement (or simply a complement) for $L/K$. A normal complement for $L/K$ will be a subgroup $J$ of $G$ as in \ref{almostclassiccond3}.
\end{defi}

It follows from Theorem \ref{charactalmostclassical} \ref{almostclassiccond4} and the Greither-Pareigis theorem \ref{thm:greitherpareigis} that every almost classically Galois extension is Hopf-Galois. Also, every Galois extension is almost classically Galois: in that case, we have that $G'$ is trivial, and hence it has the full Galois group as a normal complement.

\begin{rmk}\normalfont\label{rmk:almostclass} The relationship between the objects involved in the statements of Theorem \ref{charactalmostclassical} is as follows. If a subgroup $J$ of $G$ is a normal complement of $G'$, then $N=\lambda(J)$ is as in \ref{almostclassiccond4}. However, there might be a subgroup $J$ for which $N=\lambda(J)$ is as in \ref{almostclassiccond4} but $J$ is not normal, and then it is not a normal complement of $G'$ (in that case, we only know by the theorem that $G'$ has some normal complement). On the other hand, a subgroup $J$ is a normal complement of $G'$ if and only if for $M=L^J$, $M/K$ is a Galois complement for $L/K$.
\end{rmk}

Let $L/K$ be an almost classically Galois extension with normal complement $J$ and Galois complement $M$. Then $\widetilde{L}/M$ is a Galois extension with group $J$, and it has the same degree as the extension $L/K$. We can define properties of Galois extensions in this setting by means of the extension $\widetilde{L}/M$.

\begin{defi} Let $L/K$ be an almost classically Galois extension.
\begin{itemize}
    \item[1.] We say that $L/K$ is almost cyclic (resp. almost abelian) if it admits some normal complement $J$ which is cyclic (resp. abelian).
    \item[2.] We say that $L/K$ has exponent $n\in\mathbb{Z}_{>0}$ if it admits some normal complement $J$ which has exponent $n$.
    \item[3.] We say that $L/K$ is almost Kummer with respect to $n$ if it is almost abelian with exponent dividing $n$.
\end{itemize}
\end{defi}

The notion of almost cyclic extension had already been introduced in \cite[Definition 3.1]{byott2007}.

It is also possible to define what we understand by an almost classical Galois structure. For a group $(N,\cdot)$, we write $N^{\mathrm{opp}}$ for its opposite group, i.e. the centraliser of $N$ within $\mathrm{Perm}(X)$. If $N$ is abelian, we simply have that $N^{\mathrm{opp}}=N$. Now, assume that $N$ is a regular subgroup of $\mathrm{Perm}(X)$. Then it is checked that $N^{\mathrm{opp}}$ is also a regular subgroup of $\mathrm{Perm}(X)$ (see \cite[Proof of Theorem 2.5 (b)]{greitherpareigis}). Then, the following definition makes sense.

\begin{defi}\label{defialmoststr} Let $L/K$ be an almost classically Galois extension. Let $H$ be a Hopf-Galois structure on $L/K$ and let $N$ be the corresponding regular subgroup of $\mathrm{Perm}(X)$ normalized by $\lambda(G)$. We say that $H$ is an \textbf{almost classically Galois structure} if $N^{\mathrm{opp}}\subset\lambda(G)$. When a permutation subgroup giving an almost classically Galois structure $H$ is of the form $\lambda(J)^{\mathrm{opp}}$ for a normal complement $J$ of $L/K$, we will say that $H$ corresponds to $M\coloneqq\widetilde{L}^J$.
\end{defi}

As highlighted in Remark \ref{rmk:almostclass}, not all almost classically Galois structures are as in the second part of Definition \ref{defialmoststr}. However, they are especially well behaved. If $H$ is the Hopf algebra in an almost classically Galois structure on $L/K$, we have that $H=(\widetilde{L}[N]^J)^{G/J}$ (see \cite[Proof of Theorem 3.1]{KKTU19}). Now, if $N=\lambda(J)^{\mathrm{opp}}$ for a normal complement $J$ of $L/K$, the action of $J$ on $N$ by conjugation by $\lambda(J)$ is trivial. Then, we obtain that $H=M[N]^{G'}$. Identifying $\sigma$ with $\lambda(\sigma)$ (which defines an isomorphism of groups $J\cong\lambda(J)$), we conclude the following.

\begin{pro}\label{almostclassicstr} Let $L/K$ be an almost classically Galois extension of fields, and let $J$ be a normal complement as in Theorem \ref{charactalmostclassical}. Let $H$ be the almost classically Galois structure on $L/K$ corresponding to $J$. Then, $H=M[J^{\mathrm{opp}}]^{G'}$. If in addition $J$ is abelian, then $H=M[J]^{G'}$.
\end{pro}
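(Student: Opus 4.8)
The plan is to start from the Greither--Pareigis description of $H$ and to compute the relevant fixed ring in two stages, exploiting that $J$ is normal in $G$. By Theorem \ref{thm:greitherpareigis}, the Hopf algebra underlying the almost classically Galois structure corresponding to $M$ is $H=\widetilde{L}[N]^G$, where $N=\lambda(J)^{\mathrm{opp}}$ (cf. Definition \ref{defialmoststr}) and $G$ acts on $\widetilde{L}$ by evaluation and on $N$ by conjugation by $\lambda(G)$. Since $J\trianglelefteq G$, one may first take $J$-invariants and then $(G/J)$-invariants, that is, $H=(\widetilde{L}[N]^J)^{G/J}$, as recorded in \cite[Proof of Theorem 3.1]{KKTU19}. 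So the first step is to identify $\widetilde{L}[N]^J$, and the second is to descend the residual action to $G'$.

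The key point, and the main computation, is that $J$ acts trivially on $N$. By \cite[Lemma 2.4.2]{greitherpareigis} we identify $N=\lambda(J)^{\mathrm{opp}}$ with the centralizer of $\lambda(J)$ in $\mathrm{Perm}(X)$. For $j\in J$ and $\eta\in N$, the conjugation action sends $\eta$ to $\lambda(j)\eta\lambda(j)^{-1}$, and since $\eta$ centralizes $\lambda(J)$ this equals $\eta$. Thus $J$ acts on $\widetilde{L}[N]$ only through its (semilinear) evaluation action on the coefficients in $\widetilde{L}$, so an element $\sum_{\eta}a_\eta\eta$ is $J$-invariant precisely when each $a_\eta\in\widetilde{L}^J=M$. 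This yields $\widetilde{L}[N]^J=M[N]$.

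It remains to take $(G/J)$-invariants of $M[N]$. Because $J\trianglelefteq G$, the field $M=\widetilde{L}^J$ is Galois over $K$ with $\mathrm{Gal}(M/K)=G/J$, and $G/J$ acts on $M[N]$ by the Galois action on $M$ together with conjugation on $N$. As $J$ is a normal complement of $G'$, the composite $G'\hookrightarrow G\twoheadrightarrow G/J$ is an isomorphism, so this action is equivalently an action of $G'$ (by restriction on $M$ and by conjugation by $\lambda(G')$ on $N$). Hence $H=M[N]^{G'}$. Finally, identifying $\sigma\in J$ with $\lambda(\sigma)$ gives an isomorphism $J\cong\lambda(J)$, and hence $N=\lambda(J)^{\mathrm{opp}}\cong J^{\mathrm{opp}}$ as groups, compatibly with the $G'$-action since conjugation by $G'$ preserves $J$ (as $J\trianglelefteq G$); this turns the group algebra $M[N]$ into $M[J^{\mathrm{opp}}]$ and yields $H=M[J^{\mathrm{opp}}]^{G'}$.

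For the last assertion, the opposite of an abelian group coincides with the group itself, so when $J$ is abelian we have $J^{\mathrm{opp}}=J$ and therefore $H=M[J]^{G'}$. I expect the main obstacle to be the bookkeeping of the two distinct components of the $G$-action (semilinear on $\widetilde{L}$, by conjugation on $N$) through the staged invariants, and in particular the verification that the conjugation part becomes trivial upon restriction to $J$; that triviality hinges entirely on the identification of $N$ with the centralizer of $\lambda(J)$.
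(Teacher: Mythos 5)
Your proposal is correct and follows essentially the same route as the paper: it uses the staged fixed-ring description $H=(\widetilde{L}[N]^J)^{G/J}$ from \cite[Proof of Theorem 3.1]{KKTU19}, observes that $J$ acts trivially on $N=\lambda(J)^{\mathrm{opp}}$ by conjugation because $N$ is the centralizer of $\lambda(J)$, deduces $\widetilde{L}[N]^J=M[N]$, and then identifies the residual $G/J$-action with a $G'$-action and $N$ with $J^{\mathrm{opp}}$. The paper's own argument is exactly this (stated more tersely in the paragraph preceding the proposition), so no further comparison is needed.
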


Let $L/K$ be a Galois extension which we regard as an almost classically Galois extension. As aforesaid, the normal complement of $L/K$ for this case is $J=G$, and accordingly the Galois complement is $M=K$. Then, the almost classically Galois structure on $L/K$ corresponding to $M$ is the one given by the permutation subgroup $N=\lambda(G)^{\mathrm{opp}}$. Now, it is easily checked that $\lambda(G)$ normalizes $\rho(G)$, and since both are regular subgroups, we obtain that $N=\rho(G)$. In other words, the almost classically Galois structure on a Galois extension corresponding to its Galois complement is just its classical Galois structure.

\subsection{Linearly disjoint extensions}

Let $K$ be a field and let $L_1$ and $L_2$ be field extensions of $K$ contained in the algebraic closure $\overline{K}$ of $K$. Then the morphism $$L_1\otimes_KL_2\longrightarrow L_1L_2$$ defined by $x_1\otimes x_2\mapsto x_1x_2$ and extended by $K$-linearity is always surjective. We say that $L_1$ and $L_2$ are $K$-linearly disjoint, or that $L_1/K$ and $L_2/K$ are linearly disjoint, if the map above is bijective, or equivalently, if $L_1\otimes_KL_2$ is a field. It is easy to see that if $L_1$ and $L_2$ are $K$-linearly disjoint, then $L_1\cap L_2=K$. The converse in general does not hold.

\begin{example}\normalfont Let $K=\mathbb{Q}$, $L_1=\mathbb{Q}(\alpha)$ and $L_2=\mathbb{Q}(\zeta_3\alpha)$, where $\alpha=\sqrt[3]{2}$ is the only real number with $\alpha^3=2$ and $\zeta_3$ is a primitive third root of the unity. Then $L_1\cap L_2=\mathbb{Q}$ because $L_1\subset\mathbb{R}$ and $L_2\cap\mathbb{R}=\mathbb{Q}$. In addition, we have that $L_1L_2=\mathbb{Q}(\alpha,\zeta_3\alpha)=\mathbb{Q}(\alpha,\zeta_3)$, which is easily checked to have degree $6$ of $\mathbb{Q}$. Since $\mathrm{dim}_{\mathbb{Q}}(L_1\otimes_{\mathbb{Q}}L_2)=9$, we have that $L_1L_2$ and $L_1\otimes_{\mathbb{Q}}L_2$ are not isomorphic as $\mathbb{Q}$-algebras, and then $L_1$ and $L_2$ are not $\mathbb{Q}$-linearly disjoint.
\end{example}

However, the converse is true under fairly general restrictions on $L_1/K$ and $L_2/K$.

\begin{pro}\label{pro:lindisjcharact}\cite[Theorem 5.5]{cohn1991} Let $L_1/K$ and $L_2/K$ be finite extensions of fields such that one of them is normal and one (possibly the same) is separable. Then $L_1/K$ and $L_2/K$ are linearly disjoint if and only if $L_1\cap L_2=K$.
\end{pro}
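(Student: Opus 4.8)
The plan is to prove the nontrivial implication, since the converse (linear disjointness forces $L_1\cap L_2=K$) is immediate from the surjection $L_1\otimes_KL_2\to L_1L_2$ and is already noted above. So assume $L_1\cap L_2=K$; the goal is to show $A:=L_1\otimes_KL_2$ is a field. Since $A$ is a finite-dimensional commutative $K$-algebra surjecting onto the field $L_1L_2$, it is a field as soon as it has no nontrivial idempotents and no nilpotents; concretely I will exhibit $A$ as a finite product of fields and prove that this product has a single factor.

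First I would exploit separability to present $A$ concretely. Relabelling if necessary, assume $L_2/K$ is the separable member; by the primitive element theorem write $L_2=K(\beta)=K[x]/(g)$ with $g$ separable. Then $A\cong L_1[x]/(g)$, and since $g$ is squarefree over every extension of $K$, it factors over $L_1$ into \emph{distinct} irreducibles $g=\prod_{i=1}^r g_i$, whence $A\cong\prod_{i=1}^r L_1[x]/(g_i)$ is a product of fields. In particular $A$ is reduced, and $A$ is a field precisely when $r=1$, i.e. when $g$ remains irreducible over $L_1$.

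Next I would translate $r$ into group theory. Let $N$ be the Galois closure of $L_2/K$ (which is Galois since $L_2/K$ is separable) and put $\Gamma=\mathrm{Gal}(N/K)$; all roots of $g$ lie in $N$. The irreducible factors of $g$ over $L_1$ correspond to the orbits of $\mathrm{Aut}(\overline{K}/L_1)$ on these roots, and since $N/K$ is normal this action factors through restriction to $N$, whose image is $H:=\mathrm{Gal}(N/(L_1\cap N))$. Identifying the root set with $\Gamma/S$ for $S:=\mathrm{Gal}(N/L_2)$, I obtain $r=|H\backslash\Gamma/S|$. Moreover, using $L_2\subseteq N$ and the Galois correspondence, $L_1\cap L_2=(L_1\cap N)\cap L_2=N^{\langle H,S\rangle}$, so the hypothesis $L_1\cap L_2=K$ is exactly the statement $\langle H,S\rangle=\Gamma$.

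The crux — and the step where normality is indispensable — is that $\langle H,S\rangle=\Gamma$ is in general strictly weaker than the product-set equality $HS=\Gamma$ that single-orbit-ness ($r=1$) demands. Here the normal hypothesis enters: the normal member is either $L_1$ or $L_2$. If it is $L_1$, then $L_1\cap N$ is simultaneously normal and separable over $K$ (an intersection of normal extensions, sitting inside the separable field $N$), hence Galois, so $H\trianglelefteq\Gamma$; then $HS$ is a subgroup and equals $\langle H,S\rangle=\Gamma$, forcing $r=1$. If instead the normal member is the presented one $L_2$, then $L_2/K$ is Galois, $N=L_2$, $S=\{e\}$, and $r=[\Gamma:H]=[L_1\cap L_2:K]=1$ directly. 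Either way $r=1$, so $A$ is a field and $L_1,L_2$ are linearly disjoint. I expect the only real subtlety to be bookkeeping these two sub-cases symmetrically (always presenting via a separable member, then viewing the normal member as that same factor or the other one), together with the verification that an intersection of normal extensions is normal, which is precisely what makes $H$ normal in $\Gamma$.
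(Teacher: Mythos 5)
Your argument is correct. Note, however, that the paper does not prove this statement at all: it is imported verbatim as \cite[Theorem 5.5]{cohn1991}, so there is no in-paper proof to compare against. Your reduction --- present the separable factor as $K[x]/(g)$, decompose $L_1\otimes_K L_2\cong\prod_i L_1[x]/(g_i)$, convert the number of factors into the double coset count $|H\backslash\Gamma/S|$, and then use normality of exactly one of the two fields to upgrade $\langle H,S\rangle=\Gamma$ to $HS=\Gamma$ --- is the standard group-theoretic route and all the steps check out, including the case bookkeeping (after relabelling so that $L_2$ is separable, the normal member is either $L_1$, giving $H\trianglelefteq\Gamma$ via the fact that $L_1\cap N$ is normal and separable hence Galois over $K$, or $L_2$ itself, in which case $N=L_2$ and $S$ is trivial). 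The one step you state without justification is that the image of the restriction $\mathrm{Aut}(\overline{K}/L_1)\to\mathrm{Gal}(N/K)$ is the \emph{full} group $\mathrm{Gal}(N/(L_1\cap N))$ rather than a proper subgroup; this is needed for the double coset count, and it holds because an element of $N$ fixed by all of $\mathrm{Aut}(\overline{K}/L_1)$ is purely inseparable over $L_1$ while also separable (being in $N$), hence lies in $L_1\cap N$, after which Artin's lemma forces the image to be everything. It is a standard fact, but worth a line in a complete write-up.
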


In the case that neither of the extensions is normal, proving the linear disjointness may be a tricky problem. The following result gives a sufficient condition for specific families of extensions (see \cite[Theorem]{mordell}).

\begin{pro}\label{mordell1950th} Let $n_1,\dots,n_k\in\mathbb{Z}_{>0}$ and let $K$ be a number field. Let $L$ be an extension of $K$ generated by elements $\alpha_1,\dots,\alpha_k\in\overline{K}$ such that $\alpha_i^{n_i}\in K$ for all $1\leq i\leq k$ and $\alpha_i^{k_i}\notin K$ for all $1\leq k_i<n_i$. Assume that $K$ is totally real or $\zeta_{n_i}\in K$ for all $1\leq i\leq k$. Then the fields $K(\alpha_1),\dots,K(\alpha_k)$ are pairwise $K$-linearly disjoint.
\end{pro}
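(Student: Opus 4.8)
The plan is to prove the equivalent numerical statement: for each pair $i\neq j$, the extensions $K(\alpha_i)/K$ and $K(\alpha_j)/K$ are $K$-linearly disjoint if and only if $[K(\alpha_i,\alpha_j):K]=[K(\alpha_i):K]\cdot[K(\alpha_j):K]$, so the whole proposition reduces to a family of degree computations. The natural framework is the multiplicative group $\Gamma=\langle K^*,\alpha_1,\dots,\alpha_k\rangle\subseteq\overline{K}^*$; since each $\alpha_i^{n_i}\in K^*$, the quotient $\Gamma/K^*$ is a finite abelian group, and the hypothesis that $\alpha_i^{k_i}\notin K$ for $1<k_i<n_i$ says precisely that the class of $\alpha_i$ has order $n_i$ in $\Gamma/K^*$. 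I would first record that, under either standing assumption, $[K(\alpha_i):K]=n_i$ (the polynomial $x^{n_i}-\alpha_i^{n_i}$ being irreducible over $K$), so that the problem becomes the comparison of $[K(\Gamma_{ij}):K]$ with the group index $[\Gamma_{ij}:K^*]$ for $\Gamma_{ij}=\langle K^*,\alpha_i,\alpha_j\rangle$. Here the core of the result is the degree identity $[K(\Gamma_{ij}):K]=[\Gamma_{ij}:K^*]$, and pairwise linear disjointness is then exactly the statement that the cyclic subgroups generated by $\alpha_i$ and $\alpha_j$ form an internal direct product in $\Gamma/K^*$, i.e.\ intersect trivially.

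The engine for the degree identity is Kneser's theorem: if $K^*\subseteq\Gamma\subseteq\overline{K}^*$ with $\Gamma/K^*$ finite, and $K$ contains $\zeta_p$ for every odd prime $p\mid[\Gamma:K^*]$ together with $\zeta_4$ whenever $4\mid[\Gamma:K^*]$, then $[K(\Gamma):K]=[\Gamma:K^*]$. In the case $\zeta_{n_i}\in K$ for all $i$, every root of unity whose order divides $\mathrm{lcm}(n_1,\dots,n_k)$ already lies in $K$, so Kneser's hypotheses are satisfied by each $\Gamma_{ij}$ and the identity $[K(\Gamma_{ij}):K]=[\Gamma_{ij}:K^*]$ follows at once. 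In this situation $K(\alpha_i)/K$ is even cyclic (hence normal), so one could alternatively argue through Proposition \ref{pro:lindisjcharact}, reducing linear disjointness to the triviality of $K(\alpha_i)\cap K(\alpha_j)$. This disposes of the case where the roots of unity are available.

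The main obstacle is the totally real case, where the only roots of unity in $K$ are $\pm1$: Kneser's hypotheses genuinely fail, and a priori the compositum degree could drop below $[\Gamma_{ij}:K^*]$. To handle it I would exploit the reality of $K$ directly. Fixing an embedding $K\hookrightarrow\mathbb{R}$, the point is that any defect in the degree identity must be caused by the appearance of some $\zeta_p$ with $p$ an odd prime, or of $\zeta_4$, inside $K(\Gamma_{ij})$ but not in $K$, coupled with a radical relation among the $\alpha_i$. One then argues, in the spirit of Besicovitch and Mordell, that reality forbids this: by tracking the real positive radicals and using that $K$ admits no nontrivial real roots of unity beyond $-1$, such a relation would force a root of unity of odd prime order or a primitive fourth root to be real, a contradiction. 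This reality/specialization step—substituting for the Kummer-theoretic duality that is unavailable in the absence of roots of unity—is the delicate part of the argument. Once it is in place, the degree identity $[K(\Gamma_{ij}):K]=[\Gamma_{ij}:K^*]$ holds in the totally real case as well, and the asserted pairwise linear disjointness follows in both cases.
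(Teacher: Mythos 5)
The paper does not actually prove this proposition; it is imported wholesale from Mordell's theorem via the citation \cite{mordell}, so the only thing to assess is whether your argument is internally sound — and it has a genuine gap that your own reduction makes visible. You correctly observe that, granting $[K(\alpha_i):K]=n_i$ and the degree identity $[K(\Gamma_{ij}):K]=[\Gamma_{ij}:K^*]$, linear disjointness of $K(\alpha_i)$ and $K(\alpha_j)$ is \emph{equivalent} to the cyclic subgroups generated by $\alpha_iK^*$ and $\alpha_jK^*$ intersecting trivially in $\Gamma_{ij}/K^*$. But you then prove (part of) the degree identity and never return to the trivial-intersection claim, and it does not follow from the stated hypotheses: the condition $\alpha_i^{k_i}\notin K$ only forbids relations involving a single index, not mixed monomial relations $\alpha_i^s\alpha_j^t\in K^*$. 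Concretely, $K=\mathbb{Q}$, $\alpha_1=\sqrt{2}$, $\alpha_2=\sqrt{8}$, $n_1=n_2=2$ satisfies every hypothesis (in both the totally real and the $\zeta_{n_i}\in K$ readings), yet $K(\alpha_1)=K(\alpha_2)$. The trivial-intersection condition is exactly the independence hypothesis that Mordell's theorem actually carries (no nontrivial monomial $\prod\alpha_i^{s_i}$ with $0\le s_i<n_i$ lies in $K$), and it is the hypothesis under which the paper later applies the result (Proposition \ref{pro:totrealdisj} assumes $\langle a_1,a_2\rangle$ has rank $2$ in $K^*/(K^*)^n$). So the final sentence ``the asserted pairwise linear disjointness follows in both cases'' cannot be reached from the degree identity alone.

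There are two further problems in the totally real case. First, the preliminary claim $[K(\alpha_i):K]=n_i$ can fail when $K$ is totally real but $\alpha_i$ is not real: $\alpha=1+i$ over $\mathbb{Q}$ has $\alpha^4=-4\in\mathbb{Q}$ and $\alpha^2,\alpha^3\notin\mathbb{Q}$, yet $[\mathbb{Q}(\alpha):\mathbb{Q}]=2$. Mordell's theorem assumes the radicals themselves are real; your plan to ``fix an embedding and track real positive radicals'' silently requires replacing the given $\alpha_i$ by real conjugates, which changes the subfields whose disjointness is being asserted, so this needs to be addressed explicitly (the paper does so in Proposition \ref{pro:totrealdisj} by taking $a_i>0$ and real roots). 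Second, the step ``reality forbids this'' is the entire substance of the Besicovitch--Mordell argument and is only gestured at, not carried out. What does survive is the $\zeta_{n_i}\in K$ case of the degree identity via Kneser's theorem, though even there the condition should be checked against the exponent of $\Gamma_{ij}/K^*$ (which divides $\mathrm{lcm}(n_i,n_j)$) rather than its order (which only divides $n_in_j$), since, e.g., $4$ may divide $n_in_j$ without dividing either $n_i$ or $n_j$.
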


As for the relation between linear disjointness and almost classically Galois extensions, the following result is immediately deduced from Theorem \ref{charactalmostclassical} and Proposition \ref{pro:lindisjcharact}:

\begin{pro}\label{pro:lindisjalmostclassic} Let $L/K$ and $M/K$ be separable extensions with $M/K$ Galois. Then $L/K$ is almost classically Galois with Galois complement $M$ if and only if $L\cap M=K$ and $\widetilde{L}=LM$.
\end{pro}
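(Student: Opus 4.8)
The plan is to read the statement as a direct reformulation of the defining condition \ref{almostclassiccond2} of a Galois complement in Theorem \ref{charactalmostclassical}, decoding the tensor product $L\otimes_K M$ through the multiplication map into $\overline{K}$. Throughout, I would work with the canonical $K$-algebra map $\mu\colon L\otimes_K M\to\overline{K}$, $x\otimes y\mapsto xy$, and keep track of two pieces of data separately: the injectivity of $\mu$, which governs linear disjointness, and the image of $\mu$, which is always the compositum $LM$ (the $K$-subalgebra of $\overline{K}$ generated by $L$ and $M$ is finite-dimensional over $K$ and a domain, hence the field $LM$). By definition, $M$ being a Galois complement for $L/K$ means that $M/K$ is Galois and that $\mu$ is an isomorphism $L\otimes_K M\xrightarrow{\sim}\widetilde{L}$.

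For the forward direction, I would assume $M$ is a Galois complement and note that then $L\otimes_K M$ is a field, being isomorphic to $\widetilde{L}$ via $\mu$. By the tensor-product characterization of linear disjointness recalled in the text (the multiplication map being bijective is equivalent to $L\otimes_K M$ being a field), this is exactly the assertion that $L/K$ and $M/K$ are linearly disjoint. Since the image of $\mu$ is $LM$ and this image equals $\widetilde{L}$, I also obtain $\widetilde{L}=LM$.

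For the converse, I would assume $L/K$ and $M/K$ are linearly disjoint with $\widetilde{L}=LM$. Linear disjointness makes $\mu$ injective, hence an isomorphism onto its image $LM=\widetilde{L}$; thus $\mu$ witnesses $L\otimes_K M\cong\widetilde{L}$. As $M/K$ is Galois by hypothesis, this is precisely condition \ref{almostclassiccond2} of Theorem \ref{charactalmostclassical}, so $L/K$ is almost classically Galois with Galois complement $M$, completing the equivalence.

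The argument is short and, as the proposition already advertises, reduces to unwinding definitions; the only delicate point — the ``hard part'', such as it is — is to make precise that the equality $L\otimes_K M=\widetilde{L}$ in condition \ref{almostclassiccond2} is realized by $\mu$ itself, so that ``the image of $\mu$'' legitimately equals $LM$. This is fine because a $K$-algebra homomorphism out of $L\otimes_K M$ is determined by its restrictions to $L\otimes 1$ and $1\otimes M$, and with $L$ and $M$ fixed inside $\overline{K}$ the canonical such map is $\mu$. Once this identification is in place, both implications reduce to separating the injectivity of $\mu$ from the computation of its image.
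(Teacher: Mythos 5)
Your proposal is correct and matches the paper's intent: the paper gives no written proof, stating only that the proposition is ``immediately deduced'' from Theorem \ref{charactalmostclassical}, and your argument is exactly the expected unwinding of condition \ref{almostclassiccond2} via the multiplication map, separating injectivity (linear disjointness) from the image ($LM=\widetilde{L}$).
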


\begin{rmk}\normalfont Two almost classically Galois extensions $L_1/K$ and $L_2/K$ such that $\mathrm{gcd}([L_1:K],[L_2:K])=1$ are linearly disjoint, but the converse does not hold in general. For instance, the fields $\mathbb{Q}(\sqrt[4]{2})$ and $\mathbb{Q}(i)$ are $\mathbb{Q}$-linearly disjoint almost classically Galois extensions of $\mathbb{Q}$.
\end{rmk}

In the case of extensions of number or $p$-adic fields, it is possible to consider a stronger notion than linear disjointness.

\begin{pro} Two extensions of number or $p$-adic fields with the same ground field are said to be arithmetically disjoint if they are linearly disjoint and have coprime discriminants.
\end{pro}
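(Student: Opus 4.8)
My reading of the final item is that it is a Proposition in label only: as worded it is a \emph{definition}, introducing the term \emph{arithmetically disjoint} for a pair of extensions of number or $p$-adic fields that are linearly disjoint and have coprime discriminants. Its logical form is ``such extensions \emph{are said to be} arithmetically disjoint \emph{if} [two conditions hold]'', which stipulates terminology rather than asserting a fact; consequently there is no content to establish, and any genuine proof sketch would be vacuous. The only thing I would check is that both ingredients are already meaningful in this setting, and they are: linear disjointness has just been treated above, and for extensions of number or $p$-adic fields the relative discriminant of $L_i/K$ is a well-defined ideal of $\mathcal{O}_K$, so ``coprime discriminants'' is the unambiguous condition that these two ideals sum to $\mathcal{O}_K$.

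Were an assertion intended here, the natural candidate would be that arithmetic disjointness properly strengthens linear disjointness, that is, that coprimality of discriminants is compatible with, but strictly stronger than, the equality $L_1\cap L_2=K$. That is not what the sentence states, however, and turning it into such a theorem would mean rewriting the statement rather than proving it as given. So the main ``obstacle'' is really a matter of reading: I would record that this item functions as a definition, note that it is well posed, and defer any actual argument to the later points in the paper where the notion of arithmetic disjointness is put to use.
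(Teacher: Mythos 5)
You are right: this item is a definition mislabeled as a Proposition (the author evidently meant \texttt{defi} rather than \texttt{pro}), and accordingly the paper supplies no proof — the notion is simply introduced and then used, e.g.\ via the cited fact that arithmetic disjointness gives $\mathcal{O}_{L_1L_2}=\mathcal{O}_{L_1}\otimes_{\mathcal{O}_K}\mathcal{O}_{L_2}$. Your reading, including the well-posedness check and the decision not to invent a theorem where none is asserted, matches the paper exactly.
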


It is known that if $L_1/K$ and $L_2/K$ are arithmetically disjoint, then $\mathcal{O}_{L_1L_2}=\mathcal{O}_{L_1}\otimes_{\mathcal{O}_K}\mathcal{O}_{L_2}$ (see \cite[(2.13)]{frohlichtaylor}).

\subsection{Hopf-Galois module theory}\label{sect:hgmodtheory}

Let $K$ be the fraction field of a Dedekind domain $\mathcal{O}_K$, let $L$ be a degree $n$ Hopf-Galois extension of $K$ and let $\mathcal{O}_L$ be the integral closure of $\mathcal{O}_K$ in $L$. Let $(H,\cdot)$ be a Hopf-Galois structure on $L/K$. The associated order of $\mathcal{O}_L$ in $H$ is defined as $$\mathfrak{A}_H=\{h\in H\,|\,h\cdot\alpha\in\mathcal{O}_L\hbox{ for every }\alpha\in\mathcal{O}_L\}.$$ This is an $\mathcal{O}_K$-order in $H$ and, under the assumption that $\mathcal{O}_K$ is a PID, it is $\mathcal{O}_K$-free of rank $n$. Moreover, it is known that if $\mathfrak{A}$ is an $\mathcal{O}_K$-order in $H$ such that $\mathcal{O}_L$ is $\mathfrak{A}$-free, then $\mathfrak{A}=\mathfrak{A}_H$. If $L/K$ is Galois, we will write $\mathfrak{A}_{L/K}$ for the associated order in the classical Galois structure on $L/K$.

When $\mathcal{O}_K$ is a PID, Rio and the author \cite[Section 3]{gilrioinduced} established a constructive method to determine an $\mathcal{O}_K$-basis of $\mathfrak{A}_H$. We summarize here the main lines.

Let us fix $K$-bases $W=\{w_i\}_{i=1}^n$ and $B=\{\gamma_j\}_{j=1}^n$ of $H$ and $L$ respectively. Then, we can define a $K$-basis $\Phi=\{\varphi_i\}_{i=1}^{n^2}$ of $\mathrm{End}_{K}(L)$ as follows: For every $1\leq i\leq n^2$, there are $1\leq k,j\leq n$ such that $i=k+(j-1)n$. Let $\varphi_i$ be the map that sends $\gamma_j$ to $\gamma_k$ and the other $\gamma_l$ to $0$. The {matrix of the action} of $H$ on $L$ with respect to the bases $W$ and $B$ is the matrix $M(H_W,L_B)$ of the linear map $\rho_H\colon H\longrightarrow\mathrm{End}_{K}(L)$ arising from the choice of the basis $W$ in $H$ and the basis $\Phi$ in $\mathrm{End}_K(L)$. Equivalently, $$M(H,L)=\begin{pmatrix}
M_1(H,L) \\ \hline
\cdots \\ \hline
M_{n}(H,L) \end{pmatrix}\in\mathcal{M}_{n^2\times n}(K),$$ where\begin{equation}\label{blocksmatrix}
    M_j(H,L)\coloneqq\begin{pmatrix}
|& | &\dots  &|  \\
(w_1\cdot\gamma_j)_B&(w_2\cdot\gamma_j)_B&\dots &(w_n\cdot\gamma_j)_B \\
|& |&\dots & |\\
\end{pmatrix}\in \mathcal{M}_n(K)
\end{equation} for every $1\leq j\leq n$.

Now, there is a matrix $D\in\mathcal{M}_n(K)$ and a unimodular matrix $U\in\mathrm{GL}_{n^2}(\mathcal{O}_L)$ with the property that $$UM(H,L)=\begin{pmatrix}D \\ \hline \\[-2ex] O\end{pmatrix},$$ where $O$ is the zero matrix of $\mathcal{M}_{(m-n)\times n}(K)$ (see \cite[Theorem 2.3]{gilrioquartic}). We will refer to such a matrix $D$ as a \textbf{reduced matrix}. Note that the injectivity of $\rho_H$ implies that $M(H,L)$ has rank $n$, and therefore any reduced matrix of $M(H,L)$ is invertible. Then, an $\mathcal{O}_K$-basis of $\mathfrak{A}_H$ is determined as follows.

\begin{pro}\label{pro:basisassocorder} Let $W$ be a $K$-basis of $H$ and let $B$ be a $K$-integral basis of $L$ (i.e, an $\mathcal{O}_K$-basis of $\mathcal{O}_L$). Let $D$ be a reduced matrix for $M(H,L)$. Then, the elements of $H$ whose coordinates with respect to $W$ are the columns of the matrix $D^{-1}$ form an $\mathcal{O}_K$-basis of $\mathfrak{A}_H$.
\end{pro}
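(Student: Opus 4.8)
The plan is to reduce the defining condition of $\mathfrak{A}_H$ to a linear-algebra condition on coordinate vectors relative to $W$, and then to read off a basis directly from the reduction $UM(H,L)=\begin{pmatrix} D \\ O\end{pmatrix}$. Throughout I identify $H$ with $K^n$ via the basis $W$, so that an element $h=\sum_{i=1}^n x_iw_i$ is recorded by its coordinate vector $\mathbf{x}=(x_1,\dots,x_n)^{\mathsf T}$.

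First I would show that membership in $\mathfrak{A}_H$ can be tested on $B$ alone. Since $B$ is an $\mathcal{O}_K$-basis of $\mathcal{O}_L$ and the $H$-action is $K$-linear, $h\in\mathfrak{A}_H$ if and only if $h\cdot\gamma_j\in\mathcal{O}_L$ for every $1\le j\le n$. Expanding $w_i\cdot\gamma_j$ in the basis $B$, the coefficient of $\gamma_k$ in $h\cdot\gamma_j=\sum_i x_i(w_i\cdot\gamma_j)$ is exactly the entry of $M(H,L)\mathbf{x}$ in row $k+(j-1)n$, by the very definition of the blocks $M_j(H,L)$ in \eqref{blocksmatrix}. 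As $B$ is an integral basis, the condition $h\cdot\gamma_j\in\mathcal{O}_L$ for all $j$ is equivalent to all $n^2$ entries of $M(H,L)\mathbf{x}$ lying in $\mathcal{O}_K$. Thus, under the coordinate identification,
\[
\mathfrak{A}_H=\{\mathbf{x}\in K^n : M(H,L)\mathbf{x}\in\mathcal{O}_K^{n^2}\}.
\]

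Next I would apply the unimodular matrix $U$ to simplify this condition. The vector $M(H,L)\mathbf{x}$ has entries in $K$, and since $\mathcal{O}_K$ is integrally closed in $K$ we have $\mathcal{O}_L\cap K=\mathcal{O}_K$; hence a $K$-valued vector lies in $\mathcal{O}_K^{n^2}$ precisely when it lies in $\mathcal{O}_L^{n^2}$. Because $U$ is unimodular over $\mathcal{O}_L$ (both $U$ and $U^{-1}$ are integral), left multiplication by $U$ is a bijection of $\mathcal{O}_L^{n^2}$, so for a $K$-valued vector $v$ one has $v\in\mathcal{O}_L^{n^2}$ iff $Uv\in\mathcal{O}_L^{n^2}$. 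Applying this to $v=M(H,L)\mathbf{x}$ and using $UM(H,L)=\begin{pmatrix} D \\ O\end{pmatrix}$, the condition becomes $D\mathbf{x}\in\mathcal{O}_L^n$, and again by $\mathcal{O}_L\cap K=\mathcal{O}_K$ this is just $D\mathbf{x}\in\mathcal{O}_K^n$. Finally, since $M(H,L)$ has rank $n$ the reduced matrix $D$ is invertible, so $D\mathbf{x}\in\mathcal{O}_K^n$ holds iff $\mathbf{x}\in D^{-1}\mathcal{O}_K^n$. The coordinate lattice of $\mathfrak{A}_H$ is therefore exactly the free $\mathcal{O}_K$-module $D^{-1}\mathcal{O}_K^n$, whose standard basis is the set of columns of $D^{-1}$; translating back through $W$ yields the claimed $\mathcal{O}_K$-basis.

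I expect the main subtlety to be the equivalence in the third paragraph, namely that the reduction $U$ — only guaranteed unimodular over $\mathcal{O}_L$ rather than over $\mathcal{O}_K$ — still detects $\mathcal{O}_K$-integrality faithfully. This is resolved by keeping track of the fact that every vector appearing ($M(H,L)\mathbf{x}$ and $D\mathbf{x}$) is $K$-valued, so that $\mathcal{O}_L$- and $\mathcal{O}_K$-integrality coincide for it via $\mathcal{O}_L\cap K=\mathcal{O}_K$. The remaining ingredients — reducing to the action on $B$ and the invertibility of $D$ — are routine given that $B$ is an integral basis and that $\rho_H$ is injective.
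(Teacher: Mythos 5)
Your proof is correct, and it follows essentially the argument that the paper delegates to the cited reference \cite[Theorem 3.5]{gilrioinduced}: identify $\mathfrak{A}_H$ with the coordinate lattice $\{\mathbf{x}\in K^n : M(H,L)\mathbf{x}\in\mathcal{O}_K^{n^2}\}$ and then pass through the unimodular reduction to $D\mathbf{x}\in\mathcal{O}_K^n$, i.e.\ $\mathbf{x}\in D^{-1}\mathcal{O}_K^n$. Your explicit handling of the fact that $U$ is only stated to lie in $\mathrm{GL}_{n^2}(\mathcal{O}_L)$ — observing that every vector involved is $K$-valued so that $\mathcal{O}_L\cap K=\mathcal{O}_K$ closes the gap — correctly resolves a point the paper leaves implicit.
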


A proof can be found in \cite[Theorem 3.5]{gilrioinduced}. In that reference the result is stated for a concrete reduced matrix, the Hermite normal form of $M(H,L)$ (or more accurately, of the matrix obtained from $M(H,L)$ by dropping out the denominators of its entries, which has coefficients in $\mathcal{O}_K$), but this does not make any difference, since any two reduced matrices differ by multiplication of an invertible matrix in $\mathcal{M}_n(\mathcal{O}_K)$.

Let $L/K$ be an extension with Galois group of the form $G=J\rtimes G'$. When $L^{G'}/K$ and $L^J/K$ are arithmetically disjoint, it is possible to establish a relation between the associated order in an induced Hopf-Galois structure on $L/K$ and the ones in the inducing Hopf-Galois structures, as well as the freeness of the rings of integers over the corresponding associated orders.

\begin{pro}\label{pro:indhgmstr} Let $K$ be the fraction field of a PID $\mathcal{O}_K$, let $L$ be an extension of $K$ with Galois group of the form $G=J\rtimes G'$ and let $E=L^{G'}/K$ and $F=L^J/K$. Let $H$ be an induced Hopf-Galois structure on $L/K$ from Hopf-Galois structures $H_1$ on $E/K$ and $H_2$ on $F/K$. Assume that $E/K$ and $F/K$ are arithmetically disjoint. Then:
\begin{enumerate}
    \item[1.]\cite[Theorem 5.11]{gilrioinduced} $\mathfrak{A}_H=\mathfrak{A}_{H_1}\otimes_{\mathcal{O}_K}\mathfrak{A}_{H_2}$.
    \item[2.]\cite[Theorem 5.16]{gilrioinduced} If $\mathcal{O}_{E}$ is $\mathfrak{A}_{H_1}$-free and $\mathcal{O}_F$ is $\mathfrak{A}_{H_2}$-free, then $\mathcal{O}_L$ is $\mathfrak{A}_H$-free.
\end{enumerate}
\end{pro}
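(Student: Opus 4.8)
The plan is to prove both parts at once by reducing everything to the constructive description of the associated order in Proposition~\ref{pro:basisassocorder}, exploiting that under the standing hypothesis that $E=L^{G'}$ and $F=L^{J}$ are arithmetically disjoint, the integral structures on $L$, $E$, $F$ and on $H$, $H_1$, $H_2$ all factor as tensor products. Write $n_1=[E:K]$, $n_2=[F:K]$ and $n=n_1n_2=[L:K]$. The two inputs I would feed the argument are: first, arithmetic disjointness gives $\mathcal{O}_L=\mathcal{O}_E\otimes_{\mathcal{O}_K}\mathcal{O}_F$, so if $B^{(1)}=\{\gamma^{(1)}_{j}\}$ and $B^{(2)}=\{\gamma^{(2)}_{j'}\}$ are $K$-integral bases of $E$ and $F$, their products form a $K$-integral basis $B$ of $L$; second, by Proposition~\ref{pro:prodindhgstr} we have $H\cong H_1\otimes_K H_2$ with the action multiplicative on products, $(h_1h_2)\cdot(\alpha_1\alpha_2)=(h_1\cdot\alpha_1)(h_2\cdot\alpha_2)$, so if $W^{(1)},W^{(2)}$ are $K$-bases of $H_1,H_2$, the products $W=\{w^{(1)}_iw^{(2)}_{i'}\}$ form a $K$-basis of $H$.

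For part~1 I would compute the matrix of the action of $H$ on $L$ relative to $W$ and $B$. Writing $w^{(1)}_i\cdot\gamma^{(1)}_j=\sum_k a^{(1)}_{kij}\gamma^{(1)}_k$ and likewise for $H_2$, multiplicativity of the action makes the coefficient of $\gamma^{(1)}_{k}\gamma^{(2)}_{k'}$ in $(w^{(1)}_iw^{(2)}_{i'})\cdot(\gamma^{(1)}_j\gamma^{(2)}_{j'})$ equal to $a^{(1)}_{kij}\,a^{(2)}_{k'i'j'}$. Reading off the block form \eqref{blocksmatrix}, this says that, up to the row permutation reordering the double index $((j,k),(j',k'))$ into the stacked block ordering of $B$, one has $M(H,L)=M(H_1,E)\otimes M(H_2,F)$. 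I would then take reduced matrices $U_1M(H_1,E)=\binom{D_1}{O}$ and $U_2M(H_2,F)=\binom{D_2}{O}$ and verify that $D_1\otimes D_2$ is a reduced matrix for $M(H,L)$: the Kronecker product $U_1\otimes U_2$ is again unimodular, it sends $M(H_1,E)\otimes M(H_2,F)$ to $\binom{D_1}{O}\otimes\binom{D_2}{O}$, and a further permutation matrix collects the nonzero rows on top to produce $\binom{D_1\otimes D_2}{O}$ (all intervening permutations are unimodular, hence harmless). Since $(D_1\otimes D_2)^{-1}=D_1^{-1}\otimes D_2^{-1}$, Proposition~\ref{pro:basisassocorder} identifies the resulting $\mathcal{O}_K$-basis of $\mathfrak{A}_H$ with the products $\{v^{(1)}_iv^{(2)}_{i'}\}$ of the $\mathcal{O}_K$-bases of $\mathfrak{A}_{H_1}$ and $\mathfrak{A}_{H_2}$ read off from the columns of $D_1^{-1}$ and $D_2^{-1}$. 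This is exactly the assertion $\mathfrak{A}_H=\mathfrak{A}_{H_1}\otimes_{\mathcal{O}_K}\mathfrak{A}_{H_2}$ inside $H=H_1\otimes_K H_2$.

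For part~2, suppose $\mathcal{O}_E=\mathfrak{A}_{H_1}\cdot\beta_1$ and $\mathcal{O}_F=\mathfrak{A}_{H_2}\cdot\beta_2$ are free of rank one. I claim $\beta_1\beta_2$ is a free generator of $\mathcal{O}_L$ over $\mathfrak{A}_H$. Indeed, with the $\mathcal{O}_K$-bases $\{v^{(1)}_i\}$ of $\mathfrak{A}_{H_1}$ and $\{v^{(2)}_{i'}\}$ of $\mathfrak{A}_{H_2}$ from part~1, freeness means $\{v^{(1)}_i\cdot\beta_1\}$ and $\{v^{(2)}_{i'}\cdot\beta_2\}$ are $K$-integral bases of $E$ and $F$, so by $\mathcal{O}_L=\mathcal{O}_E\otimes_{\mathcal{O}_K}\mathcal{O}_F$ their products form a $K$-integral basis of $L$. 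Multiplicativity of the action gives $(v^{(1)}_iv^{(2)}_{i'})\cdot(\beta_1\beta_2)=(v^{(1)}_i\cdot\beta_1)(v^{(2)}_{i'}\cdot\beta_2)$, whence the $\mathfrak{A}_H$-module map $\mathfrak{A}_H\to\mathcal{O}_L$, $h\mapsto h\cdot(\beta_1\beta_2)$, carries the $\mathcal{O}_K$-basis $\{v^{(1)}_iv^{(2)}_{i'}\}$ of $\mathfrak{A}_H$ onto an $\mathcal{O}_K$-basis of $\mathcal{O}_L$. Hence it is an isomorphism of $\mathfrak{A}_H$-modules, and $\mathcal{O}_L$ is $\mathfrak{A}_H$-free.

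The routine inclusion $\mathfrak{A}_{H_1}\otimes_{\mathcal{O}_K}\mathfrak{A}_{H_2}\subseteq\mathfrak{A}_H$ could also be seen directly: for $h_1\in\mathfrak{A}_{H_1}$, $h_2\in\mathfrak{A}_{H_2}$ and a product $\alpha_1\alpha_2$ with $\alpha_i$ integral, multiplicativity yields $(h_1h_2)\cdot(\alpha_1\alpha_2)=(h_1\cdot\alpha_1)(h_2\cdot\alpha_2)\in\mathcal{O}_L$, and $\mathcal{O}_L$ is $\mathcal{O}_K$-spanned by such products. The substance of part~1, and the step I expect to be the main obstacle, is the reverse inclusion; the matrix computation above delivers it cleanly, but the delicate point is the bookkeeping identifying $M(H,L)$ with the Kronecker product $M(H_1,E)\otimes M(H_2,F)$ under the correct ordering of the product bases, together with the verification that tensoring the two reduction steps (followed by the permutation collecting the zero rows) genuinely produces a valid reduced matrix for $M(H,L)$.
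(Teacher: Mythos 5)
Your proof is correct and follows essentially the same route as the source: the paper itself only cites \cite{gilrioinduced} for this proposition, but the cited argument --- reproduced almost verbatim in the paper's own proof of the analogous Corollary~\ref{coro:prodhgmstr} for product Hopf-Galois structures --- proceeds exactly as you do, via the Kronecker factorization $M(H,L)=M(H_1,E)\otimes M(H_2,F)$ up to row permutation, reduction by the unimodular matrix $U_1\otimes U_2$, Proposition~\ref{pro:basisassocorder} applied to $D_1\otimes D_2$, and the product generator $\beta_1\beta_2$ for part~2. If anything, you are slightly more careful than the paper's parallel proof, which writes the Kronecker product of the two reduced stacks directly in reduced form without noting the interleaved zero rows that your extra collecting permutation handles.
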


\section{Products of Hopf-Galois structures on almost classically Galois extensions}\label{sectprodhg}

In this section we consider the compositum $L$ of almost classically Galois extensions $L_1/K$, $L_2/K$, which is isomorphic to their tensor product $L_1\otimes_KL_2$ when $L_1$ and $L_2$ are $K$-linearly disjoint. We are interested in finding conditions for $L_1$ and $L_2$ in order to assure that $L/K$ is almost classically Galois. This phenomenon is already known for Galois extensions by means of the following result, whose proof is straightforward.

\begin{pro}\label{pro:prodgalois} Let $E/K$ and $F/K$ be Galois extensions of fields with Galois groups $G_E$ and $G_F$, respectively. Write $L=EF$ for the compositum of $E$ and $F$. Then:
\begin{itemize}
    \item[1.] $L/K$ is Galois.
    \item[2.] The Galois group $G$ of $L/K$ is such that the map $f\colon G\longrightarrow G_E\times G_F$ defined by $f(\sigma)=(\sigma|_E,\sigma|_F)$ is injective.
    \item[3.] The map $f$ is bijective if and only if $E/K$ and $F/K$ are linearly disjoint.
\end{itemize}
\end{pro}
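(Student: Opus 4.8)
The plan is to establish the three claims in turn, the substance lying in the translation of bijectivity of $f$ into a degree equality that matches the tensor-product description of linear disjointness. Throughout I assume $E/K$ and $F/K$ are finite, as in the rest of the paper. For the first claim, I would invoke the standard facts that a compositum of normal extensions is normal and a compositum of separable extensions is separable: writing $E$ and $F$ as splitting fields over $K$ of families of separable polynomials, their compositum $L=EF$ is the splitting field of the union of those families, hence normal, and it is separable because it is generated over $K$ by separable elements. Thus $L/K$ is Galois, and $G=\mathrm{Gal}(L/K)$ is a finite group of order $[L:K]$.

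For the second claim, I first check that $f$ is well defined. Because $E/K$ and $F/K$ are normal, every $\sigma\in G$ satisfies $\sigma(E)=E$ and $\sigma(F)=F$, so the restrictions $\sigma|_E$ and $\sigma|_F$ are $K$-automorphisms of $E$ and $F$, that is, elements of $G_E$ and $G_F$. That $f$ is a group homomorphism is immediate from $(\sigma\tau)|_E=(\sigma|_E)(\tau|_E)$ and likewise on $F$. For injectivity, suppose $f(\sigma)=(\mathrm{id}_E,\mathrm{id}_F)$; then $\sigma$ fixes $E$ and $F$ pointwise, hence fixes the compositum $L=EF$ pointwise, so $\sigma=\mathrm{id}_L$. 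Therefore $\ker f$ is trivial and $f$ is injective.

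For the third claim, injectivity of $f$ already forces $|G|\leq |G_E\times G_F|=[E:K][F:K]$, with equality precisely when $f$ is surjective, hence bijective. Since $|G|=[L:K]$, bijectivity of $f$ is equivalent to the degree equality $[EF:K]=[E:K][F:K]$. On the other hand, the canonical surjection $E\otimes_KF\longrightarrow EF$ has source of $K$-dimension $[E:K][F:K]$ and target of dimension $[EF:K]$, so it is an isomorphism exactly when these dimensions coincide; by definition this is the linear disjointness of $E/K$ and $F/K$. Combining the two equivalences yields the claim. The only point requiring care is this final dimension count, where one must keep track of which map is automatically surjective and use finiteness to pass from injectivity of $f$ to the equivalence of bijectivity with the cardinality equality; everything else is routine verification.
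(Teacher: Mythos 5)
Your proof is correct and complete; the paper itself gives no argument for this proposition, dismissing it as straightforward, and your write-up is exactly the standard argument one would supply (normality and separability of the compositum, restriction maps giving an injective homomorphism into $G_E\times G_F$, and the degree count $[EF:K]=[E:K][F:K]$ matched against the dimension of $E\otimes_KF$ to characterize linear disjointness as defined in the paper). No gaps.
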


The second statement means that the Galois group $G$ of $L/K$ can be embedded in the direct product $G_E\times G_F$ by means of the map $f$. The action of $G$ on $L$ can be described as follows: Given $\sigma\in G$, for every $\alpha_E\in E$ and $\alpha_F\in F$ we have $$\sigma(\alpha_E\alpha_F)=\sigma(\alpha_E)\sigma(\alpha_F)=\sigma|_E(\alpha_E)\sigma|_F(\alpha_F),$$ with $f(\sigma)=(\sigma|_E,\sigma|_F)$. 

\subsection{The compositum of almost classically Galois extensions}

Let $L_1/K$ and $L_2/K$ be two almost classically Galois extensions with Galois complements $M_1$ and $M_2$ respectively. Assume that $L_1$ and $L_2$ are $K$-linearly disjoint; in particular $L_1\cap L_2=K$. We introduce the following terminology:

\begin{defi} Let $L_1/K$ and $L_2/K$ be almost classically Galois extensions with Galois complements $M_1$ and $M_2$ respectively. Assume that $L_1$ and $L_2$ are $K$-linearly disjoint. We say that $L_1/K$ and $L_2/K$ are \textbf{strongly disjoint} if $L_1\cap M_2=L_2\cap M_1=K$. Any extension that can be written as the compositum of strongly disjoint extensions will be called \textbf{strongly decomposable}.
\end{defi}

In this definition we are not requiring that $M_1\cap M_2=K$. In fact, it is quite easy to find strongly disjoint extensions $L_1,L_2/K$ with complements $M_1,M_2$ such that $M_1\cap M_2\neq K$.

\begin{rmk}\normalfont Note that the Galois complement $M$ of an almost classically Galois extension $L/K$ depends on its normal complement $J$, which in general is not unique. Hence, the notion of strong disjointness for almost classically Galois extensions $L_1/K$ and $L_2/K$ depends on the choice of the normal complements $J_1$ and $J_2$. Throughout this paper such a choice will always be implicit.
\end{rmk}

Write $L=L_1L_2$ for the compositum of $L_1$ and $L_2$. Recall that we want to show that $L/K$ is an almost classically Galois extension and build a Hopf-Galois structure on $L/K$ from Hopf-Galois structures on $L_1/K$ and $L_2/K$.

For $i\in\{1,2\}$, we introduce the following notation:
\begin{itemize}
    \item $\widetilde{L_i}$ is the normal closure of $L_i/K$.
    \item $G_i\coloneqq\mathrm{Gal}(\widetilde{L_i}/K)$, $G_i'\coloneqq\mathrm{Gal}(\widetilde{L_i}/L_i)$ and $J_i\coloneqq\mathrm{Gal}(\widetilde{L_i}/M_i)$.
    \item $\lambda_i\colon G_i\longrightarrow\mathrm{Perm}(G_i/G_i')$ is the left translation map for $L_i/K$.
    \item $N_i$ is a regular subgroup of $\mathrm{Perm}(G_i/G_i')$ normalized by $\lambda_i(G_i)$ (therefore, giving a Hopf-Galois structure on $L_i/K$).
\end{itemize}

Note that the normal closure of $L/K$ is $\widetilde{L}=\widetilde{L_1}\widetilde{L_2}$. Indeed, if $N/K$ is a normal extension such that $L\subseteq N$, then $L_1,L_2\subseteq N$, and the normality of $N$ gives that $\widetilde{L_1},\widetilde{L_2}\subseteq N$, so $\widetilde{L_1}\widetilde{L_2}\subseteq N$.

\begin{pro}\label{pro:compalmostclassical} Let $L_1/K$ and $L_2/K$ be strongly disjoint almost classically Galois extensions with complements $M_1$, $M_2$, and call $L=L_1L_2$. Then $L/K$ is an almost classically Galois extension with complement $M=M_1M_2$. Consequently, any strongly decomposable extension is almost classically Galois.
\end{pro}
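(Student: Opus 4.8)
The plan is to verify that $M=M_1M_2$ is a Galois complement for $L=L_1L_2$ and then invoke Proposition~\ref{pro:lindisjalmostclassic}. Concretely, I must check three things: that $M/K$ is Galois, that $\widetilde{L}=LM$, and that $L$ and $M$ are linearly disjoint over $K$. The first two are routine. Since $M_1/K$ and $M_2/K$ are Galois, their compositum $M=M_1M_2$ is Galois over $K$ by the first part of Proposition~\ref{pro:prodgalois}. For the normal closure, I would apply Proposition~\ref{pro:lindisjalmostclassic} to each $L_i/K$ to obtain $\widetilde{L_i}=L_iM_i$, and combine this with the identity $\widetilde{L}=\widetilde{L_1}\widetilde{L_2}$ established just before the statement: $\widetilde{L}=\widetilde{L_1}\widetilde{L_2}=(L_1M_1)(L_2M_2)=(L_1L_2)(M_1M_2)=LM$.

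The whole difficulty is therefore concentrated in the linear disjointness of $L$ and $M$ over $K$. Since $M/K$ is normal and $L/K$ is separable, Proposition~\ref{pro:lindisjcharact} reduces this to proving $L\cap M=K$, and I would first recast this as a numerical identity. As $M/K$ is Galois, $L\cap M=K$ is equivalent to $[LM:K]=[L:K][M:K]$, i.e. to $[\widetilde{L}:K]=[L:K][M:K]$. Expanding every degree via $[L:K]=[L_1:K][L_2:K]$ (linear disjointness of $L_1,L_2$), $[\widetilde{L_i}:K]=[L_i:K][M_i:K]$, the Galois formula $[M:K]=[M_1:K][M_2:K]/[M_1\cap M_2:K]$, and $[\widetilde{L}:K]=[\widetilde{L_1}:K][\widetilde{L_2}:K]/[\widetilde{L_1}\cap\widetilde{L_2}:K]$, everything cancels and the statement becomes equivalent, given the obvious inclusion $M_1\cap M_2\subseteq\widetilde{L_1}\cap\widetilde{L_2}$, to the clean identity
\[
\widetilde{L_1}\cap\widetilde{L_2}=M_1\cap M_2 .
\]
Thus only the reverse inclusion $\widetilde{L_1}\cap\widetilde{L_2}\subseteq M_1\cap M_2$ remains.

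I expect this reverse inclusion to be the main obstacle, and it is exactly the point where the full strength of the hypotheses enters: linear disjointness of $L_1$ and $L_2$, the cross conditions $L_1\cap M_2=L_2\cap M_1=K$, and the fact that each $M_i$ is genuinely the complement of $L_i$ (i.e. $\widetilde{L_i}=L_iM_i$). It cannot be obtained from the triviality of the pairwise intersections alone: for an unrelated Galois field one may have $A\cap L_1=A\cap L_2=K$ with $L_1,L_2$ linearly disjoint and yet $A\cap L_1L_2\neq K$ (e.g. $A=K(\sqrt{a_1a_2})$, $L_i=K(\sqrt{a_i})$), and what rules such configurations out here is precisely the relation $\widetilde{L_i}=L_iM_i$.

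To carry out the reverse inclusion I would pass to $G=\mathrm{Gal}(\widetilde{L}/K)$ with $\widetilde{L}=\widetilde{L_1}\widetilde{L_2}$. Writing $\pi_i\colon G\to G_i$ for the surjective restriction maps and $C=\widetilde{L_1}\cap\widetilde{L_2}$, the group $G$ is the fibre product $G_1\times_{\Gamma}G_2$ over $\Gamma=\mathrm{Gal}(C/K)$. Here $L_i$ and $M_i$ correspond to $\pi_i^{-1}(G_i')$ and $\pi_i^{-1}(J_i)$, and the almost classically Galois hypothesis supplies the semidirect decompositions $G_i=J_i\rtimes G_i'$ with $J_i$ normal. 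The hypotheses translate into subgroup relations: $\widetilde{L_i}=L_iM_i$ and $L_i\cap M_i=K$ read $\pi_i^{-1}(J_i)\cap\pi_i^{-1}(G_i')=\ker\pi_i$ and $\pi_i^{-1}(J_i)\pi_i^{-1}(G_i')=G$, while the cross conditions read $\pi_1^{-1}(J_1)\pi_2^{-1}(G_2')=\pi_2^{-1}(J_2)\pi_1^{-1}(G_1')=G$. The desired $C\subseteq M_1\cap M_2$ amounts to showing that each normal complement $J_i$ fixes $C$ pointwise, and I would deduce this by combining the above subgroup identities with the linear disjointness of $L_1,L_2$, which controls the index of $\pi_1^{-1}(G_1')\cap\pi_2^{-1}(G_2')$ in $G$. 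Reconciling these relations so as to pin down $[C:K]$ is the delicate step, since the natural tower-and-degree reductions turn out to be only formally equivalent to one another and genuinely require the cross conditions together with the complement structure to break the circularity.
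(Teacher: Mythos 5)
Your reduction is organized sensibly and the parts you do carry out are correct: $M/K$ is Galois by Proposition~\ref{pro:prodgalois}, $\widetilde{L}=LM$ follows from $\widetilde{L}=\widetilde{L_1}\widetilde{L_2}$ and $\widetilde{L_i}=L_iM_i$, and the degree bookkeeping that converts $L\cap M=K$ into the identity $\widetilde{L_1}\cap\widetilde{L_2}=M_1\cap M_2$ is valid (all the extensions involved in the quotient formulas are Galois, so the formulas apply). Your $K(\sqrt{a_1a_2})$ example also correctly identifies why pairwise-trivial intersections alone cannot close the argument. The problem is that the proof stops exactly at the point where the content of the proposition lives. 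The inclusion $\widetilde{L_1}\cap\widetilde{L_2}\subseteq M_1\cap M_2$ is, as your own computation shows, \emph{equivalent} to the statement being proved (given the routine parts); your treatment of it consists of introducing the fibre product $G_1\times_{\Gamma}G_2$, translating the hypotheses into subgroup identities, and then announcing that reconciling them is "the delicate step." No argument is given for why each $J_i$ fixes $C=\widetilde{L_1}\cap\widetilde{L_2}$ pointwise, and the closing sentence is an acknowledgement of the difficulty rather than a resolution of it. As written, this is a correct reformulation of the proposition, not a proof of it.

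For comparison, the paper does not compute the intersection at all: it verifies condition \ref{almostclassiccond2} of Theorem~\ref{charactalmostclassical} directly by showing $L\otimes_KM\cong\widetilde{L}$. It decomposes $L\otimes_KM$ as the compositum of the four subalgebras $L_i\otimes_KM_j$; the diagonal terms are the fields $\widetilde{L_1}$, $\widetilde{L_2}$ by the definition of Galois complement, and the off-diagonal terms are fields isomorphic to $L_1M_2$ and $L_2M_1$ inside $\widetilde{L}$ --- this is precisely where strong disjointness enters, via Proposition~\ref{pro:lindisjcharact} applied to $L_i$ and the normal extension $M_j$. All four pieces then embed in $\widetilde{L}$ and the tensor product collapses onto it. If you want to finish along your own route, the missing input is the same one: from the cross conditions you get $[L_1M_2:K]=[L_1:K][M_2:K]$ and $[L_2M_1:K]=[L_2:K][M_1:K]$, and you must actually convert these (together with $\widetilde{L_i}=L_iM_i$ and the linear disjointness of $L_1,L_2$) into the equality $[\widetilde{L}:K]=[L:K][M:K]$, or equivalently into the pointwise statement that $J_i$ acts trivially on $C$. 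Until that derivation is written out, the proposal has a genuine gap at its central step.
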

\begin{proof}
Since $M_1/K$ and $M_2/K$ are Galois, by Proposition \ref{pro:prodgalois}, $M/K$ is also Galois. Moreover, $$\widetilde{L}=\widetilde{L_1}\widetilde{L_2}=(L_1M_1)(L_2M_2)=(L_1L_2)(M_1M_2)=LM.$$ Let us prove that $LM\cong L\otimes_KM$. We consider the formal $K$-algebra $L\otimes_KM$. We have that $$L\otimes_KM=(L_1\otimes_KM_1)(L_1\otimes_KM_2)(L_2\otimes_KM_1)(L_2\otimes_KM_2).$$ At the right side member of this equality, the factors $L_i\otimes_KM_j$ are regarded as $K$-subalgebras of $L\otimes_KM$, and their product is the $K$-algebra generated by all possible products (within $L\otimes_KM$) of their elements. Now, by definition of Galois complement, $\widetilde{L_i}\cong L_i\otimes_KM_i$ for $i\in\{1,2\}$, so that $(L_1\otimes_KM_1)(L_2\otimes_KM_2)\cong\widetilde{L}$ as $K$-algebras. Furthermore, the strong disjointness together with the fact that $L_1,L_2,M_1,M_2\subseteq\widetilde{L}$ allows us to apply Proposition \ref{pro:lindisjcharact}, obtaining that $L_i\otimes_KM_j\cong L_iM_j$ for every $1\leq i,j\leq 2$. Then $L_1\otimes_KM_2$ and $L_2\otimes_KM_1$ can be embedded in $\widetilde{L}$, proving that $L\otimes_KM\cong\widetilde{L}$.
\end{proof}

Let us call $G\coloneqq\mathrm{Gal}(\widetilde{L}/K)$, $J\coloneqq\mathrm{Gal}(\widetilde{L}/M)$ and $G'\coloneqq\mathrm{Gal}(\widetilde{L}/L)$. Applying Proposition \ref{pro:prodgalois} to the extensions $\widetilde{L_1}/K$, $\widetilde{L_2}/K$, we obtain that there is a monomorphism $G\hookrightarrow G_1\times G_2$. 

\begin{lema}\label{lema:embedding} Under the embedding $G\hookrightarrow G_1\times G_2$, we have that:
\begin{enumerate}
    \item\label{lemaemb1} $G'$ is embedded in $G_1'\times G_2'$.
    \item\label{lemaemb2} $J$ is isomorphic to $J_1\times J_2$.
\end{enumerate}
\end{lema}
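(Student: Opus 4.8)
The plan is to work entirely through the fixed monomorphism $G\hookrightarrow G_1\times G_2$ set up just before the statement, under which each $g\in G$ corresponds to the pair of restrictions $(g_1,g_2)=(g|_{\widetilde{L_1}},g|_{\widetilde{L_2}})$. Both assertions then reduce to deciding which elements of $G$ fix the relevant intermediate fields; for \ref{lemaemb2} a dimension count will be needed to promote an injection into $J_1\times J_2$ to an isomorphism. Throughout I keep in mind that the embedding is already known to be injective, so it suffices to pin down the image.

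For \ref{lemaemb1}, I would take $g\in G'=\mathrm{Gal}(\widetilde{L}/L)$. Since $g$ fixes $L=L_1L_2$ pointwise, it fixes $L_1$ and $L_2$; because $L_i\subseteq\widetilde{L_i}$, the restriction $g_i=g|_{\widetilde{L_i}}$ fixes $L_i$, so $g_i\in\mathrm{Gal}(\widetilde{L_i}/L_i)=G_i'$. Hence the image of $G'$ under the embedding lies inside $G_1'\times G_2'$, which is precisely the claim that $G'$ embeds in $G_1'\times G_2'$. No disjointness hypothesis is used here beyond the existence of the embedding.

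For \ref{lemaemb2}, the same restriction argument, now applied to $g\in J=\mathrm{Gal}(\widetilde{L}/M)$ with $M=M_1M_2$, shows that $g$ fixes $M_1$ and $M_2$, so $g_i\in\mathrm{Gal}(\widetilde{L_i}/M_i)=J_i$; thus the embedding restricts to an injection $J\hookrightarrow J_1\times J_2$. The remaining task, and the only genuinely non-formal step, is surjectivity, which I would settle by matching orders. On one side, $|J_i|=[\widetilde{L_i}:M_i]=[L_i:K]$ since $\widetilde{L_i}/M_i$ is Galois with group $J_i$ of degree equal to $[L_i:K]$, and the linear disjointness of $L_1,L_2$ gives $[L:K]=[L_1:K][L_2:K]$, so $|J_1\times J_2|=[L:K]$. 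On the other side, Proposition \ref{pro:compalmostclassical} exhibits $M=M_1M_2$ as a Galois complement for $L/K$, whence $\widetilde{L}\cong L\otimes_K M$ and therefore $[\widetilde{L}:M]=[L:K]=|J|$. An injection between finite groups of equal order is a bijection, so $J\cong J_1\times J_2$.

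I expect the surjectivity in \ref{lemaemb2} to be the crux: it is exactly the point where the strong/linear disjointness enters, via Proposition \ref{pro:compalmostclassical} and the resulting identity $\widetilde{L}\cong L\otimes_K M$. Everything else is the routine bookkeeping of recording which restrictions fix $L_i$ or $M_i$.
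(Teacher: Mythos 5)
Your proposal is correct and follows essentially the same route as the paper: restriction of automorphisms fixing $L$ (resp.\ $M$) to $\widetilde{L_i}$ lands in $G_i'$ (resp.\ $J_i$), and the isomorphism in \ref{lemaemb2} is obtained by comparing orders via the linear disjointness identities $[L:K]=[L_1:K][L_2:K]$, $|J_i|=[L_i:K]$ and $|J|=[L:K]$, the last coming from Proposition \ref{pro:compalmostclassical}. The only cosmetic difference is that for \ref{lemaemb1} you use the inclusion $L_i\subseteq L$ directly, whereas the paper computes $\widetilde{L_i}\cap L$ explicitly; your shortcut suffices for the stated containment.
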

\begin{proof}
    If $\{i,j\}=\{1,2\}$, using the $K$-linear disjointness of $L_i$ with $L_j$ and $M_i$, we have that $\widetilde{L_i}\cap L=(L_iM_i)\cap(L_iL_j)=L_i(L_j\cap M_i)=L_i$, the last equality due to $L_1/K$ and $L_2/K$ being strongly disjoint. Hence, every $g\in G'$ satisfies $g|_{\widetilde{L_i}}\in\mathrm{Gal}(\widetilde{L_i}/L_i)$. Then, by means of the monomorphism $G\hookrightarrow G_1\times G_2$, $G'$ is embedded in $G_1'\times G_2'$. Likewise, we have that $\widetilde{L_i}\cap M=M_i$ for $i\in\{1,2\}$, and $J$ is embedded in $J_1\times J_2$. 
    
    Now, call $n_i=[L_i:K]$ for $i\in\{1,2\}$ and $n=[L:K]$. Since $L_1/K$ and $L_2/K$ are linearly disjoint, $n=n_1n_2$. Moreover, the $K$-linear disjointness of $L_i$ and $M_i$ gives that $n_i=[\widetilde{L}_i:M_i]=|J_i|$, and similarly, the $K$-linear disjointness of $L$ and $M$ yields $n=|J|$. We deduce that $|J|=|J_1\times J_2|$, and \eqref{lemaemb2} follows.
\end{proof}


Let $\lambda\colon G\longrightarrow\mathrm{Perm}(G/G')$ be the left translation map of $L/K$, and let $\lambda_i\colon G_i\longrightarrow\mathrm{Perm}(G_i/G_i')$ be the left translation map of $L_i/K$ for $i\in\{1,2\}$. We will prove that the definition of $\lambda$ can be recovered from the ones of $\lambda_1$ and $\lambda_2$. Since $G'$ is embedded in $G_1'\times G_2'$, there is a well defined map $$\begin{array}{rccl}
    \psi \colon & G/G' & \longrightarrow & G_1/G_1'\times G_2/G_2' \\
    & g_1g_2G' & \longmapsto & (g_1G_1',g_2G_2')
\end{array}$$ 

\begin{lema}\label{lem:psibij} The map $\psi$ is a bijection.
\end{lema}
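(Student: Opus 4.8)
The plan is to reduce the claim to injectivity by a cardinality count, and then to establish injectivity by identifying the subgroup $G\cap(G_1'\times G_2')$ of $G$ with $G'$ via the Galois correspondence.

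First I would record the relevant cardinalities. Since $\widetilde{L}/L$ is Galois with group $G'$, we have $|G/G'|=[L:K]$, and the $K$-linear disjointness of $L_1$ and $L_2$ gives $[L:K]=[L_1:K]\,[L_2:K]=n_1n_2$. Likewise $|G_i/G_i'|=[L_i:K]=n_i$ for $i\in\{1,2\}$, so the target $G_1/G_1'\times G_2/G_2'$ has cardinality $n_1n_2$ as well. As both the source and the target of $\psi$ are finite of the same cardinality $n_1n_2$, it will suffice to prove that $\psi$ is injective.

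For injectivity the key observation is that $G\cap(G_1'\times G_2')=G'$, where the intersection is taken inside $G_1\times G_2$ under the monomorphism $G\hookrightarrow G_1\times G_2$ of Proposition \ref{pro:prodgalois}. Indeed, an element $g=g_1g_2\in G$ lies in $G_1'\times G_2'$ precisely when $g_1=g|_{\widetilde{L_1}}$ fixes $L_1$ and $g_2=g|_{\widetilde{L_2}}$ fixes $L_2$; this happens if and only if $g$ fixes the compositum $L=L_1L_2$, that is, if and only if $g\in G'$. Granting this, suppose $\psi(g_1g_2G')=\psi(h_1h_2G')$. Then $h_1^{-1}g_1\in G_1'$ and $h_2^{-1}g_2\in G_2'$, so the element $(h_1h_2)^{-1}(g_1g_2)\in G$ corresponds to the pair $(h_1^{-1}g_1,h_2^{-1}g_2)$ in $G_1\times G_2$, which therefore lies in $(G_1'\times G_2')\cap G=G'$. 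Hence $g_1g_2G'=h_1h_2G'$, proving injectivity and thus bijectivity.

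I do not expect a serious obstacle here: everything rests on the Galois-theoretic identity that fixing both $L_1$ and $L_2$ is the same as fixing $L$, together with the dimension count coming from linear disjointness. The one point to state carefully is that multiplication in $G$ matches componentwise multiplication in $G_1\times G_2$ under the embedding of Proposition \ref{pro:prodgalois}, which is exactly what lets us pass from the two coset conditions on the components to a single coset condition in $G/G'$.
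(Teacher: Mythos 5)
Your proof is correct, but it takes the opposite route from the paper's. The paper establishes \emph{surjectivity} directly: given $(g_1G_1',g_2G_2')$, it uses the fact that $J_i$ is a transversal of $G_i/G_i'$ to write $g_i=\sigma_{j_i}^{(i)}g_i'$, and then invokes part \eqref{lemaemb2} of Lemma \ref{lema:embedding} (namely $J\cong J_1\times J_2$ inside $G_1\times G_2$) to see that $\sigma_{j_1}^{(1)}\sigma_{j_2}^{(2)}$ actually lies in $G$ and maps to the given pair; injectivity then follows from the cardinality count. You instead establish \emph{injectivity} directly via the identity $G\cap(G_1'\times G_2')=G'$, which is just the Galois-theoretic observation that an automorphism fixes $L=L_1L_2$ pointwise if and only if it fixes both $L_1$ and $L_2$ pointwise, and you get surjectivity from the same cardinality count. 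Both arguments are sound. Your version is somewhat more economical: it needs only the embedding $G\hookrightarrow G_1\times G_2$ of Proposition \ref{pro:prodgalois} and the degree count from linear disjointness, and it sharpens part \eqref{lemaemb1} of Lemma \ref{lema:embedding} from an inclusion $G'\hookrightarrow G_1'\times G_2'$ to an equality $G'=G\cap(G_1'\times G_2')$; in particular it does not rely on the decomposition $J\cong J_1\times J_2$. The paper's version has the minor advantage of exhibiting an explicit preimage (a representative taken from $J$), which is in the spirit of the transversal bookkeeping used throughout Section \ref{sectprodhg}. Your closing remark about multiplication in $G$ matching componentwise multiplication under the embedding is indeed the one point that needs to be said, and it holds because the embedding is given by restriction to $\widetilde{L_1}$ and $\widetilde{L_2}$, which is a group homomorphism in each component.
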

\begin{proof}
First, we will check that $\psi$ is surjective. Let $(g_1G_1',g_2G_2')\in G_1/G_1'\times G_2/G_2'$. We know that the elements of $J_i$ form a transversal for $G_i/G_i'$, that is: if $J_i=\{\sigma_1^{(i)},\dots,\sigma_{n_i}^{(i)}\}$, then $G_i/G_i'=\{\sigma_1^{(i)}G_i',\dots,\sigma_{n_i}^{(i)}G_i'\}$. Then for each $i\in\{1,2\}$ there are a unique $j_i\in\{1,\dots,n_i\}$ and some $g_i'\in G_i'$ such that $g_i=\sigma_{j_i}^{(i)}g_i'$. Now, using the operation of the direct product $G_1\times G_2$, $$g_1g_2=\sigma_{j_1}^{(1)}g_1'\sigma_{j_2}^{(2)}g_2'=\sigma_{j_1}^{(1)}\sigma_{j_2}^{(2)}g_1'g_2',$$ and since $J$ is isomorphic to $J_1\times J_2$ under the embedding $G\hookrightarrow G_1\times G_2$, we have that $\sigma_{j_1}^{(1)}\sigma_{j_2}^{(2)}\in J\subseteq G$. Now, $$\psi(\sigma_{j_1}^{(1)}\sigma_{j_2}^{(2)}G')=(\sigma_{j_1}^{(1)}G_1',\sigma_{j_2}^{(2)}G_2')=(g_1G_1',g_2G_2'),$$ and the surjectivity follows. 

The $K$-linear disjointness of $L_1$ and $L_2$ gives that the domain and codomain have the same (finite) number of elements, so $\psi$ is a bijection.
\end{proof}

Let us identify $G/G'$ with $G_1/G_1'\times G_2/G_2'$ by writing $g_1g_2G'=g_1G_1'g_2G_2'$ for $g_1\in G_1,g_2\in G_2$.

\begin{lema}\label{lema:declambda} With the notation above, $\lambda=\iota\circ\chi$, where \begin{equation}\label{eq:declambda2}\begin{array}{rcccr}
    \chi\colon & G & \longrightarrow & \mathrm{Perm}(G_1/G_1')\times\mathrm{Perm}(G_2/G_2'), &\\
     & g_1g_2 & \longmapsto & (\lambda_1(g_1),\lambda_2(g_2)), &\\
     \\
    \iota\colon & \mathrm{Perm}(G_1/G_1')\times\mathrm{Perm}(G_2/G_2') & \longrightarrow & \mathrm{Perm}(G/G'),&\\
     & (\varphi_1,\varphi_2) & \longmapsto & g_1g_2G'\mapsto\varphi_1(g_1G_1')\varphi_2(g_2G_2').&
\end{array}\end{equation} Moreover, both $\chi$ and $\iota$ are group monomorphisms.
\end{lema}
\begin{rmk}\normalfont If we remove the identification of $G/G'$ with its image by $\psi$, then the definition of $\iota(\varphi_1,\varphi_2)$ for $\varphi_i\in\mathrm{Perm}(G_i/G_i')$ is $\iota(\varphi_1,\varphi_2)(g_1g_2G')=\psi^{-1}(\varphi_1(g_1G_1'),\varphi_2(g_2G_2'))$, which is well defined because of the bijectivity of $\psi$.
\end{rmk}
\begin{proof}
Fix $g=g_1g_2\in G$ for $g_i\in G_i$. Given $hG'\in G/G'$, write $h=h_1h_2$ with $h_i\in G_i$, so that $h_1h_2G'=h_1G_1'h_2G_2'$. Then we have
\begin{equation*}
    \begin{split}
        \iota\circ\chi(g)(hG')&=\iota(\lambda_1(g_1),\lambda_2(g_2))(h_1h_2G')\\&=\lambda_1(g_1)(h_1G_1')\lambda_2(g_2)(h_2G_2')\\&=(g_1h_1G_1')(g_2h_2G_2')=g_1g_2h_1h_2G'=\lambda(g)(hG').
    \end{split}
\end{equation*}

That $\chi$ is a monomorphism follows immediately from the fact that so are $\lambda_1$ and $\lambda_2$. As for $\iota$, let $(\varphi_1,\varphi_2)\in\mathrm{Ker}(\iota)$, so that $\iota(\varphi_1,\varphi_2)=\mathrm{Id}_{G/G'}$. Given $(g_1,g_2)\in G_1\times G_2$, we know from Lemma \ref{lem:psibij} that there is $g\in G$ such that $\psi(gG')=(g_1G_1',g_2G_2')$, that is $gG'=g_1G_1'g_2G_2'$. Then $\iota(\varphi_1,\varphi_2)(gG')=\varphi_1(g_1G_1')\varphi_2(g_2G_2')$, but also $\iota(\varphi_1,\varphi_2)(gG')=gG'=g_1G_1'g_2G_2'$. Now, the injectivity of $\psi$ gives $\varphi_1(g_1G_1')=g_1G_1'$ and $\varphi_2(g_2G_2')=g_2G_2'$. Since $g_1$ and $g_2$ are arbitrary, we conclude that $\varphi_i=\mathrm{Id}_{G_i/G_i'}$.
\end{proof}

\begin{pro}\label{pro:prodhgstrsubg} If $N_i$ is a regular subgroup of $\mathrm{Perm}(G_i/G_i')$ normalized by $\lambda_i(G_i)$ for $i\in\{1,2\}$, then $N=\iota(N_1\times N_2)$ is a regular subgroup of $\mathrm{Perm}(G/G')$ normalized by $\lambda(G)$.
\end{pro}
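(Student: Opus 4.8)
The plan is to exploit the decomposition $\lambda=\iota\circ\chi$ and the fact that $\iota$ is a group monomorphism, both established in Lemma \ref{lema:declambda}, together with the identification of $G/G'$ with $G_1/G_1'\times G_2/G_2'$ coming from Lemma \ref{lem:psibij}. Since $N_1\times N_2$ is a subgroup of $\mathrm{Perm}(G_1/G_1')\times\mathrm{Perm}(G_2/G_2')$ and $\iota$ is a homomorphism, its image $N=\iota(N_1\times N_2)$ is automatically a subgroup of $\mathrm{Perm}(G/G')$; injectivity of $\iota$ gives moreover $N\cong N_1\times N_2$.

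For regularity, I would unwind the definition of $\iota$: under the identification $g_1g_2G'=g_1G_1'g_2G_2'$, an element $\iota(\varphi_1,\varphi_2)$ of $N$ sends $g_1G_1'g_2G_2'$ to $\varphi_1(g_1G_1')\varphi_2(g_2G_2')$. Thus the action of $N$ on $G/G'$ is nothing but the product action of $N_1\times N_2$ on $X_1\times X_2$, where $X_i=G_i/G_i'$. It then suffices to observe that a product of two simply transitive actions is again simply transitive: given two points, transitivity is achieved componentwise, and freeness follows because an element $(\varphi_1,\varphi_2)$ fixing a point $(x_1,x_2)$ must fix each coordinate, forcing $\varphi_1$ and $\varphi_2$ to be the identity by regularity of $N_1$ and $N_2$.

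It remains to check that $N$ is normalized by $\lambda(G)$. Fix $g=g_1g_2\in G$. Since $\lambda(g)=\iota(\chi(g))=\iota(\lambda_1(g_1),\lambda_2(g_2))$ and $\iota$ is a homomorphism,
$$\lambda(g)N\lambda(g)^{-1}=\iota\bigl(\chi(g)(N_1\times N_2)\chi(g)^{-1}\bigr).$$
Conjugation in the direct product $\mathrm{Perm}(G_1/G_1')\times\mathrm{Perm}(G_2/G_2')$ is performed componentwise, so the right-hand side equals $\iota\bigl((\lambda_1(g_1)N_1\lambda_1(g_1)^{-1})\times(\lambda_2(g_2)N_2\lambda_2(g_2)^{-1})\bigr)$. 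By hypothesis $N_i$ is normalized by $\lambda_i(G_i)$, whence each conjugate equals $N_i$ and the expression collapses to $\iota(N_1\times N_2)=N$.

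The argument is essentially formal once Lemmas \ref{lem:psibij} and \ref{lema:declambda} are in place; the only points demanding genuine attention are the correct interpretation of the action of $N$ through the identification $\psi$ (so that regularity really reduces to regularity of the product action) and the fact that conjugation may be pushed through $\iota$ because it is a group homomorphism. I expect no serious obstacle beyond keeping the two levels of identification — the one for points in $G/G'$ and the one for permutations via $\iota$ — carefully aligned.
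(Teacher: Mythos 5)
Your proof is correct and takes essentially the same approach as the paper: both rest on the decomposition $\lambda=\iota\circ\chi$ from Lemma \ref{lema:declambda} and on pushing componentwise conjugation through the monomorphism $\iota$ for the normalization step. The only cosmetic difference is in the regularity check, where you verify freeness of the action directly via triviality of stabilizers, while the paper combines componentwise transitivity with the order count $|N|=|N_1||N_2|=|G/G'|$; the two are equivalent for finite sets.
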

\begin{proof}
First, we prove that $N$ is regular. Since $N_1$ and $N_2$ are regular, we have that $|N|=|N_1||N_2|=|G_1/G_1'||G_2/G_2'|=|G/G'|$, so it is enough to check that the action of $N$ on $G/G'$ is transitive. Let $gG',\,hG'\in G/G'$ and write $g=g_1g_2$, $h=h_1h_2$ with $g_i,h_i\in G_i$. We know that $N_i$ acts transitively on $G_i/G_i'$, so there is $\eta_i\in N_i$ such that $\eta_i(g_iG_i')=h_iG_i'$ for $i\in\{1,2\}$. Therefore, $\iota(\eta_1,\eta_2)(gG')=\eta_1(g_1G_1')\eta_2(g_2G_2')=(h_1G_1')(h_2G_2')=hG'$.

Now, let us prove that $N$ is normalized by $\lambda(G)$. If $(\eta,\mu)\in N_1\times N_2$ and $g=g_1g_2\in G$, we have that
$$\chi(g)(\eta,\mu)\chi(g)^{-1}=(\lambda_1(g_1)\eta\lambda(g_1)^{-1},\lambda_2(g_2)\mu\lambda_2(g_2)^{-1})\in N_1\times N_2.$$ Thus, since $\lambda=\iota\circ\chi$, we have
$$\lambda(g)\iota(\eta,\mu)\lambda(g)^{-1}=\iota(\lambda(g)(\eta,\mu)\lambda(g)^{-1})\in N.$$
\end{proof}

\subsection{The product Hopf-Galois structure}

Applying the correspondence provided by the Greither-Pareigis theorem to the statement of Proposition \ref{pro:prodhgstrsubg}, we obtain a Hopf-Galois structure on $L/K$ from Hopf-Galois structures on $L_1/K$ and $L_2/K$.

\begin{defi}\label{defi:prodhgstr} Let $L_1/K$ and $L_2/K$ be strongly disjoint almost classically Galois extensions and, for $i\in\{1,2\}$, let $H_i$ be the Hopf-Galois structure on $L_i/K$ given by a permutation subgroup $N_i$. For $L=L_1L_2$, the Hopf-Galois structure on $L/K$ given by $N=\iota(N_1\times N_2)$ will be referred to as the \textbf{product Hopf-Galois structure} on $L/K$ from $H_1$ and $H_2$.
\end{defi}

The name for this construction will be justified after Proposition \ref{pro:prodhgstr} below, where we will prove that the product Hopf-Galois structure $H$ from $H_1$ and $H_2$ can be seen as the tensor product of $H_1$ and $H_2$, both at the level of Hopf algebras and at the level of actions.

We now consider the product of almost classically Galois structures.

\begin{pro}\label{pro:prodalmostclassical} Let $L_1/K$ and $L_2/K$ be strongly disjoint almost classically Galois extensions with complements $M_1$ and $M_2$, and call $L=L_1L_2$ and $M=M_1M_2$. If $H_i$ is an almost classically Galois structure on $L_i/K$, then the product Hopf-Galois structure $H$ from $H_1$ and $H_2$ on $L/K$ is almost classically Galois. Moreover, if $H_i$ is the almost classically Galois structure corresponding to $M_i$ for each $i\in\{1,2\}$, then $H$ is the almost classically Galois structure on $L/K$ corresponding to $M$.
\end{pro}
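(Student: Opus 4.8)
The plan is to carry out the entire argument on the permutation-group side, via the Greither--Pareigis dictionary and the decomposition $\lambda=\iota\circ\chi$ of Lemma \ref{lema:declambda}. By Definition \ref{defialmoststr}, proving that $H$ is almost classically Galois means showing that the opposite $N^{\mathrm{opp}}$ of the regular subgroup $N=\iota(N_1\times N_2)$ attached to $H$ is contained in $\lambda(G)$; proving the second assertion means identifying $N$ with $\lambda(J)^{\mathrm{opp}}$ for $J=\mathrm{Gal}(\widetilde{L}/M)$. The engine for both is a single identity,
\[N^{\mathrm{opp}}=\iota(N_1^{\mathrm{opp}}\times N_2^{\mathrm{opp}}),\]
expressing the opposite of the product subgroup as the image under $\iota$ of the product of the opposites; everything else is bookkeeping with $\iota$ and $\chi$, using that $\iota$ is a monomorphism.

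I would prove this identity through centralizers. Recall that for a regular subgroup the opposite group is identified with its centralizer in the ambient symmetric group, so $N_i^{\mathrm{opp}}$ is the centralizer of $N_i$ in $\mathrm{Perm}(G_i/G_i')$ and $N^{\mathrm{opp}}$ the centralizer of $N$ in $\mathrm{Perm}(G/G')$. Since $\iota$ is a group homomorphism, every element of $\iota(N_1^{\mathrm{opp}}\times N_2^{\mathrm{opp}})$ commutes with every element of $\iota(N_1\times N_2)=N$, so $\iota(N_1^{\mathrm{opp}}\times N_2^{\mathrm{opp}})$ lands in the centralizer of $N$. The two groups then coincide by counting: regularity gives $|N_i^{\mathrm{opp}}|=|N_i|$ and $|N^{\mathrm{opp}}|=|N|=|N_1||N_2|$, while the injectivity of $\iota$ gives $|\iota(N_1^{\mathrm{opp}}\times N_2^{\mathrm{opp}})|=|N_1^{\mathrm{opp}}||N_2^{\mathrm{opp}}|$, so both sides have the same order.

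With the identity available, the second (``corresponding to $M$'') assertion follows cleanly. If $H_i$ corresponds to $M_i$ then $N_i=\lambda_i(J_i)^{\mathrm{opp}}$ with $J_i=\mathrm{Gal}(\widetilde{L_i}/M_i)$ the normal complement, so $N_i^{\mathrm{opp}}=\lambda_i(J_i)$. By Lemma \ref{lema:embedding}\eqref{lemaemb2} the embedding $G\hookrightarrow G_1\times G_2$ identifies $J$ with $J_1\times J_2$, whence $\chi(J)=\lambda_1(J_1)\times\lambda_2(J_2)=N_1^{\mathrm{opp}}\times N_2^{\mathrm{opp}}$. Substituting into the identity and using $\lambda=\iota\circ\chi$ gives $N^{\mathrm{opp}}=\iota(\chi(J))=\lambda(J)$. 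In particular $N^{\mathrm{opp}}\subseteq\lambda(G)$, so $H$ is almost classically Galois, and $N=\lambda(J)^{\mathrm{opp}}$ with $J=\mathrm{Gal}(\widetilde{L}/M)$ a normal complement of $L/K$, so $H$ is exactly the structure corresponding to $M=M_1M_2$.

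For the first assertion I would reduce to the same inclusion. A general almost classically Galois structure satisfies $N_i^{\mathrm{opp}}\subseteq\lambda_i(G_i)$, so $N_i^{\mathrm{opp}}=\lambda_i(\tilde J_i)$ for a normal complement $\tilde J_i$ of $G_i'$ in $G_i$. By the identity together with the injectivity of $\iota$, the goal $N^{\mathrm{opp}}=\iota(\lambda_1(\tilde J_1)\times\lambda_2(\tilde J_2))\subseteq\iota(\chi(G))=\lambda(G)$ is equivalent to the purely group-theoretic inclusion $\tilde J_1\times\tilde J_2\subseteq G$ inside $G_1\times G_2$. This inclusion is the main obstacle: writing $G=\{(g_1,g_2)\colon g_1|_{F}=g_2|_{F}\}$ with $F=\widetilde{L_1}\cap\widetilde{L_2}$, it amounts to showing that each $\tilde J_i$ fixes $F$, i.e. $F\subseteq\widetilde{L_i}^{\tilde J_i}$. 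This is precisely where the disjointness hypotheses must be spent, and it is the step I expect to require care, since $F$ need not be trivial; the computation behind Lemma \ref{lema:embedding} already yields $F\subseteq M_1\cap M_2$, which forces $\tilde J_1\times\tilde J_2=J_1\times J_2\subseteq G$ for the complements attached to $M_1,M_2$ and thus secures the inclusion for the structures compatible with the strong disjointness that underlies the product construction.
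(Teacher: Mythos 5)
Your overall route is the same as the paper's: everything is made to rest on the identity $N^{\mathrm{opp}}=\iota(N_1^{\mathrm{opp}}\times N_2^{\mathrm{opp}})$, which you prove exactly as the paper does (the right-hand side centralizes $N$ because $\iota$ is a homomorphism, and both sides are regular, hence have the same order), and your treatment of the second assertion — $N_i^{\mathrm{opp}}=\lambda_i(J_i)$, then $\chi(J)=\lambda_1(J_1)\times\lambda_2(J_2)$ via Lemma \ref{lema:embedding}\eqref{lemaemb2}, hence $N^{\mathrm{opp}}=\lambda(J)$ and $N=\lambda(J)^{\mathrm{opp}}$ — is precisely the paper's argument. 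That part is correct.

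The problem is the first assertion. Your reduction of $N^{\mathrm{opp}}\subseteq\lambda(G)$ to the inclusion $\tilde J_1\times\tilde J_2\subseteq G$ inside $G_1\times G_2$ is correct, and you have put your finger on the one genuinely delicate point: the paper's own proof passes over it, since writing $\eta=\lambda(g_1g_2)$ already presupposes that $(g_1,g_2)$ lies in $G$, which is exactly this inclusion. But your proposed resolution — ``$F\subseteq M_1\cap M_2$, which forces $\tilde J_1\times\tilde J_2=J_1\times J_2$'' — only applies when $\tilde J_i=J_i$, i.e. when $H_i$ is the structure corresponding to the complement $M_i$ fixed by the strong-disjointness hypothesis. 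That is the hypothesis of the \emph{second} assertion; the first assertion allows $H_i$ to be \emph{any} almost classically Galois structure on $L_i/K$, and an extension may admit several normal complements with different fixed fields. For such a $\tilde J_i\neq J_i$, with $\tilde M_i=\widetilde{L_i}^{\tilde J_i}\neq M_i$, the information $F\subseteq M_1\cap M_2$ gives no control over whether $F\subseteq\tilde M_1\cap\tilde M_2$, so the inclusion $\tilde J_1\times\tilde J_2\subseteq G$ is not established and your argument does not prove the first assertion. The difficulty is not cosmetic: take $L_1=\mathbb{Q}(\sqrt[4]{2})$ and $L_2=\mathbb{Q}(\sqrt[4]{3})$, which are strongly disjoint with complements $M_1=M_2=\mathbb{Q}(i)$, and note that $L_1/\mathbb{Q}$ also admits the Galois complement $\mathbb{Q}(\sqrt{-2})$ (with normal complement $\tilde J_1=\{1,\sigma^2,\sigma\tau,\sigma^3\tau\}$ in $D_4$); here $F=\widetilde{L_1}\cap\widetilde{L_2}=\mathbb{Q}(i)$ is not fixed by $\sigma\tau\in\tilde J_1$, so $\tilde J_1\times\tilde J_2\not\subseteq G$ and the element $\iota(\lambda_1(\sigma\tau),\mathrm{id})$ of $N^{\mathrm{opp}}$ does not lie in $\lambda(G)$. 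So the step you flagged as requiring care genuinely fails for structures other than those corresponding to $M_1,M_2$, and some additional hypothesis (or a restriction to those structures) is needed to close the first assertion.
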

\begin{proof}
For each $i\in\{1,2\}$, let $J_i=\mathrm{Gal}(\widetilde{L_i}/M_i)$, $G_i'=\mathrm{Gal}(\widetilde{L_i}/L_i)$ and $\lambda_i\colon G_i\longrightarrow\mathrm{Perm}(G_i/G_i')$ be the left translation map for $L_i/K$. Let $N_i$ be the subgroup of $\mathrm{Perm}(G_i/G_i')$ giving an almost classically Galois structure $H_i$ on $L_i/K$, so that $N_i^{\mathrm{opp}}\subset\lambda_i(G_i)$. Let $N=\iota(N_1\times N_2)$, which is the regular subgroup of $\mathrm{Perm}(G/G')$ giving the product Hopf-Galois structure $H$ on $L/K$ from $H_1$ and $H_2$. We need to check that $N^{\mathrm{opp}}\subset\lambda(G)$. 

First, note that $N^{\mathrm{opp}}=\iota(N_1^{\mathrm{opp}}\times N_2^{\mathrm{opp}})$. Indeed, each element in the right side member is centralized by $N$, so $\iota(N_1^{\mathrm{opp}}\times N_2^{\mathrm{opp}})\subseteq N^{\mathrm{opp}}$. Since $N_i^{\mathrm{opp}}$ is a regular subgroup of $\mathrm{Perm}(G_i/G_i')$, Proposition \ref{pro:prodhgstrsubg} gives that $\iota(N_1^{\mathrm{opp}}\times N_2^{\mathrm{opp}})$ is a regular subgroup of $\mathrm{Perm}(G/G')$. But $N^{\mathrm{opp}}$ also is, so they have the same order and then the equality holds. 

Since $H_i$ is almost classically Galois, $N_i^{\mathrm{opp}}\subset\lambda_i(G_i)$. Then, given $\eta\in N^{\mathrm{opp}}$, there are $g_i\in G_i$ such that $\eta=\iota(\lambda_1(g_1),\lambda_2(g_2))$. Now, given $h_i\in G_i$, we have that $$\eta(h_1h_2G')=\lambda(g_1)(h_1G_1')\lambda(g_2)(h_2G_2')=(g_1h_1G_1')(g_2h_2G_2')=g_1g_2h_1h_2G'=\lambda(g_1g_2)(h_1h_2G').$$ Then $\eta=\lambda(g_1g_2)\in\lambda(G)$. We obtain that $N^{\mathrm{opp}}\subset\lambda(G)$, which proves that $H$ is almost classically Galois.

Now, let us assume that $H_i$ corresponds to $M_i$, so $N_i=\lambda_i(J_i)^{\mathrm{opp}}$. Then $N=\iota(\lambda_1(J_1)^{\mathrm{opp}}\times\lambda_2(J_2)^{\mathrm{opp}})$. Let us write $J=\mathrm{Gal}(\widetilde{L}/M)$, $G'=\mathrm{Gal}(\widetilde{L}/L)$ and $\lambda\colon G\longrightarrow \mathrm{Perm}(G/G')$. It is easy to check that $\lambda(J)=\iota(\lambda_1(J_1)\times\lambda_2(J_2))$, so $N=\lambda(J)^{\mathrm{opp}}$. Hence $H$ is the almost classically Galois structure on $L/K$ corresponding to $M$.
\end{proof}

\begin{rmk}\normalfont It is immediate from Proposition \ref{pro:prodalmostclassical} that under the assumption that $H_i$ is the almost classically Galois structure corresponding to $M_i$ for each $i\in\{1,2\}$, a Hopf-Galois structure $H$ on $L/K$ is the product Hopf-Galois structure from $H_1$ and $H_2$ if and only if it is the almost classically Galois structure on $L/K$, due to the uniqueness of such structures.
\end{rmk}

As noticed at the end of Section \ref{sect:almostclassic}, the almost classically Galois structure corresponding to a complement in a Galois extension is its classical Galois structure. Thus, we obtain the following.

\begin{coro}\label{coro:prodclassical} Let $L_1/K$ and $L_2/K$ be two linearly disjoint Galois extensions. The product Hopf-Galois structure on $L/K$ of the classical Galois structures on $L_1/K$ and $L_2/K$ is the classical Galois structure on $L/K$.
\end{coro}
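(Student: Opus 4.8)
The plan is to deduce this directly from Proposition \ref{pro:prodalmostclassical}, after recognizing that the classical Galois structure is nothing but the almost classically Galois structure attached to the trivial Galois complement. First I would recall, as explained at the end of Section \ref{sect:almostclassic}, that when a Galois extension $L_i/K$ with group $G_i$ is regarded as almost classically Galois, its normal complement is the whole group $J_i=G_i$ (since $G_i'=\mathrm{Gal}(\widetilde{L_i}/L_i)$ is trivial), so that the associated Galois complement is $M_i=K$. Moreover, the almost classically Galois structure corresponding to $M_i=K$ is precisely the classical Galois structure on $L_i/K$, as the permutation subgroup $N_i=\lambda_i(G_i)^{\mathrm{opp}}$ coincides with $\rho(G_i)$.

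Next I would verify that the hypotheses of Proposition \ref{pro:prodalmostclassical} hold. Since $M_1=M_2=K$, the strong disjointness condition $L_1\cap M_2=L_2\cap M_1=K$ reduces to $L_1\cap K=L_2\cap K=K$, which is trivially satisfied; combined with the assumed linear disjointness of $L_1/K$ and $L_2/K$, this shows that the two extensions are strongly disjoint almost classically Galois extensions, with $M=M_1M_2=K$. In particular the product Hopf-Galois structure on $L=L_1L_2$ is well defined in the sense of Definition \ref{defi:prodhgstr}.

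I would then apply Proposition \ref{pro:prodalmostclassical} taking $H_i$ to be the classical Galois structure on $L_i/K$, that is, the almost classically Galois structure corresponding to $M_i=K$. The second assertion of that proposition yields that the product Hopf-Galois structure $H$ on $L/K$ is the almost classically Galois structure corresponding to $M=M_1M_2=K$. Finally, since $L/K$ is itself Galois by Proposition \ref{pro:prodgalois} and its Galois complement is $M=K$, the almost classically Galois structure corresponding to $M=K$ is exactly the classical Galois structure on $L/K$, once again by the discussion closing Section \ref{sect:almostclassic}. This completes the argument. I do not expect any substantial obstacle: the whole content is the reduction of the classical Galois structure to the complement-$K$ case, after which the statement is an immediate specialization of Proposition \ref{pro:prodalmostclassical}.
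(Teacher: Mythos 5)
Your proposal is correct and follows exactly the route the paper intends: it reduces the classical Galois structure to the almost classically Galois structure corresponding to the complement $M_i=K$ (as noted at the end of Section \ref{sect:almostclassic}), observes that linear disjointness gives strong disjointness when the complements are trivial, and then specializes the second assertion of Proposition \ref{pro:prodalmostclassical}. No issues.
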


\subsubsection*{The likeness with induced Hopf-Galois structures}

In this section we compare the notion of product Hopf-Galois structure with the one of induced Hopf-Galois structure that we discussed in Section \ref{sect:inducedhgstr}. 

Let $L/K$ be a Galois extension with group $G=J\rtimes G'$, and let $E=L^{G'}$, $F=L^J$. The extension $E/K$ is almost classically Galois because its Galois closure $\widetilde{E}$ satisfies $\widetilde{E}\subseteq L\cong E\otimes_KF$ and we apply Theorem \ref{charactalmostclassical} \eqref{almostclassiccond1}. The Galois complement of $E/K$ is necessarily contained in $F$. Hence, the extensions $E/K$ and $F/K$ are not strongly disjoint unless the extension $L/K$ is Galois. Then, it only makes sense to consider both product Hopf-Galois structures and induced Hopf-Galois structures in compositums of linearly disjoint Galois extensions. In that case, both notions are the same.

\begin{pro} Let $E/K$ and $F/K$ be linearly disjoint Galois extensions with groups $J$ and $G'$ respectively, and let $L=EF$. Then the induced Hopf-Galois structures on $L/K$ are the product Hopf-Galois structures on $L/K$ from Hopf-Galois structures on $E/K$ and $F/K$.
\end{pro}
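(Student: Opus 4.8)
The plan is to show that, in this linearly disjoint Galois setting, both constructions are applied to the \emph{same} input data and are glued by the \emph{same} map, so that under the Greither--Pareigis correspondence they produce one and the same regular subgroup of $\mathrm{Perm}(G)$. First I would check that both notions are simultaneously defined and rest on the same underlying picture. Since $E/K$ and $F/K$ are Galois they are almost classically Galois with Galois complement $K$, and linear disjointness forces $E\cap K=F\cap K=K$, so they are strongly disjoint and the product Hopf-Galois structures are defined as in Definition \ref{defi:prodhgstr}. On the other hand, Proposition \ref{pro:prodgalois} gives that $L=EF$ is Galois and that restriction induces an isomorphism $G:=\mathrm{Gal}(L/K)\cong J\times G'$; in particular $G$ is the \emph{direct} product $J\times G'$ (the semidirect action is trivial), and $E=L^{G'}$, $F=L^{J}$. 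This places us exactly in the situation of Section \ref{sect:inducedhgstr}, so the induced Hopf-Galois structures are defined as well, and both families are parametrized by a choice of Hopf-Galois structure on $E/K$ together with one on $F/K$.

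Next I would match the two parametrizations. In both frameworks a Hopf-Galois structure on $E/K$ is recorded as a regular subgroup $N_1\leq\mathrm{Perm}(J)$: in the product framework because $\widetilde{E}=E$ gives $\mathrm{Perm}(G_1/G_1')=\mathrm{Perm}(J)$, and in the induced framework through the transversal identification $\mathrm{Perm}(G/G')\cong\mathrm{Perm}(J)$. A direct computation with $G=J\times G'$ shows that $\lambda_c(G)$ coincides with the image of the left regular representation of $J$, so the normalizing condition picking out the admissible $N_1$ is the same in both cases. Similarly, a Hopf-Galois structure on $F/K$ is recorded as a regular subgroup $N_2\leq\mathrm{Perm}(G')$; here I would use that $\mathrm{Gal}(L/E)$ and $\mathrm{Gal}(F/K)$ are canonically identified via restriction (again by linear disjointness), so that the bijection between the Hopf-Galois structures on $L/E$ and on $F/K$ used in Section \ref{sect:inducedhgstr} is the identity on regular subgroups of $\mathrm{Perm}(G')$. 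Thus both constructions take the very same pairs $(N_1,N_2)$ as input.

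Finally I would compare the gluing maps. The permutations act on $G/\mathrm{Gal}(L/L)=G$ in the product framework and on $G$ in the induced framework; under the identification $G\cong J\times G'$ every element is written $\sigma\tau$ with $\sigma\in J$ and $\tau\in G'$. Both the induced map $\iota$ of \eqref{eq:declambda} and the product map $\iota$ of Lemma \ref{lema:declambda} then send a pair $(\varphi_1,\varphi_2)$ to the permutation $\sigma\tau\mapsto\varphi_1(\sigma)\varphi_2(\tau)$, so the two maps $\iota$ are textually identical. Consequently $\iota(N_1\times N_2)$ is one and the same regular subgroup of $\mathrm{Perm}(G)$ in both constructions, and by the Greither--Pareigis correspondence the induced and the product Hopf-Galois structures attached to $(N_1,N_2)$ agree, which proves the claim.

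The only real work here is bookkeeping rather than computation. The main obstacle I anticipate is reconciling the two notational frameworks: the clash between the two meanings of $G'$ (the trivial group $\mathrm{Gal}(\widetilde{L}/L)$ in the product setup versus $\mathrm{Gal}(L/E)$ in the induced setup), and the two different identifications of $\mathrm{Perm}(G/G')$ with $\mathrm{Perm}(J)$. Once one verifies that the semidirect action is trivial, i.e. $G=J\times G'$, the two formulas for $\iota$ collapse to the same expression, so the delicate step to carry out carefully is precisely the identification $\lambda_c(G)=$ (image of the left regular representation of $J$), which guarantees that the induced transversal description really coincides with the direct product description.
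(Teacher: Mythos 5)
Your proposal is correct and follows essentially the same route as the paper: identify $G$ with the direct product $J\times G'$, observe that both constructions take the same pairs of regular subgroups $(N_1,N_2)$ of $\mathrm{Perm}(J)\times\mathrm{Perm}(G')$ as input (with matching normalizer conditions), and check that the two gluing maps $\iota$ from \eqref{eq:declambda} and Lemma \ref{lema:declambda} coincide. Your write-up is in fact somewhat more careful than the paper's, which simply asserts that the two decompositions of $\lambda$ ``are exactly the same''; your explicit verification that $\lambda_c(G)$ equals the image of the left regular representation of $J$ and that the bijection between Hopf--Galois structures on $L/E$ and on $F/K$ is the identity on permutation subgroups is exactly the bookkeeping the paper leaves implicit.
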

\begin{proof}
Since $E/K$ and $F/K$ are linearly disjoint, by Proposition \ref{pro:prodgalois}, the extension $L/K$ is Galois with Galois group $G$ isomorphic to the direct product $J\times G'$. Then, it makes sense to consider induced Hopf-Galois structures on $L/K$. Moreover, $E/K$ and $F/K$ are strongly disjoint, so it makes sense to consider product Hopf-Galois structures on $L/K$.

By the Greither-Pareigis theorem, the Hopf-Galois structures on $L/K$ are in bijective correspondence with the regular subgroups of $\mathrm{Perm}(G)$ normalized by the image of the left translation map $\lambda\colon G\longrightarrow\mathrm{Perm}(G)$. It is enough to prove that such a subgroup is as in Proposition \ref{pro:ngivesinduced} if and only if it is as in Proposition \ref{pro:prodhgstrsubg}. In order to do so, we will see that the decompositions of $\lambda$ in \eqref{eq:declambda} and \eqref{eq:declambda2} are actually the same (up to identifications).

Let $\iota$ and $\chi$ be the maps at \eqref{eq:declambda}. Note that since $G\cong J\times G'$, $J\cong\mathrm{Gal}(L/F)$ and $G'\cong\mathrm{Gal}(L/E)$. Then, we can write $\chi\colon G\longrightarrow\mathrm{Perm}(J)\times\mathrm{Perm}(G')$ and $\iota\colon\mathrm{Perm}(J)\times\mathrm{Perm}(G')\longrightarrow\mathrm{Perm}(G)$ also in this context. Recall that these maps are defined as $$\chi(\sigma\tau)=(\lambda_c(\sigma\tau),\lambda'(\tau)),$$ $$\iota(\varphi,\psi)(\sigma\tau)=\varphi(\sigma)\psi(\tau),$$ where $\lambda'\colon G'\longrightarrow\mathrm{Perm}(G')$ is the left regular representation of $G'$ and the map $\lambda_c\colon G\longrightarrow\mathrm{Perm}(J)$ is obtained from the left translation map $\overline{\lambda}\colon G\longrightarrow\mathrm{Perm}(G/G')$ associated to $E/K$ (recall that we can apply the Greither-Pareigis theorem as if $L=\widetilde{E}$) by considering the isomorphism $J\cong G/G'$, since $J$ is a transversal of $G'$ in $G$.

Now, let $\iota'$ and $\chi'$ be the maps at \eqref{eq:declambda2}. The isomorphisms $J\cong G/G'$ and $G'\cong G/J$ allow us to write $\chi'\colon G\longrightarrow\mathrm{Perm}(J)\times\mathrm{Perm}(G')$ and $\iota'\colon\mathrm{Perm}(J)\times\mathrm{Perm}(G')\longrightarrow\mathrm{Perm}(G)$. With this identification, $\iota'$ is defined as $$\iota'(\varphi,\psi)(\sigma\tau)=\varphi(\sigma)\psi(\tau),$$ and we obtain $\iota=\iota'$. Then $\lambda=\iota\circ\chi=\iota\circ\chi'$, and since $\iota$ is a monomorphism, the equality $\chi=\chi'$ follows.

Then, induced Hopf-Galois structures and product Hopf-Galois structures on $L/K$ are built in the same way.
\end{proof}

From the previous discussion it holds that as soon as one of the extensions we consider is not Galois, at most one of the two notions apply. However, there are still some similarities. Namely, we have an analogue of Proposition \ref{pro:prodindhgstr} for product Hopf-Galois structures.

\begin{pro}\label{pro:prodhgstr} Let $L/K$ be the compositum of two strongly disjoint almost classically Galois extensions $L_1/K$ and $L_2/K$. Let $H_i$ be a Hopf-Galois structure on $L_i/K$ and let $H$ be the product Hopf-Galois structure on $L/K$ from $H_1$ and $H_2$. Then:
\begin{itemize}
    \item[1.] $H\cong H_1\otimes_KH_2$ as $K$-algebras.
    \item[2.] If $h_i\in H_i$ and $\alpha_i\in L_i$ for $i\in\{1,2\}$, then $(h_1h_2)\cdot(\alpha_1\alpha_2)=(h_1\cdot \alpha_1)(h_2\cdot \alpha_2)$.
\end{itemize}
\end{pro}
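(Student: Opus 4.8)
The plan is to mirror the arguments that establish the analogous facts for induced Hopf-Galois structures (\cite[Propositions 5.5 and 5.8]{gilrioinduced}), exploiting that the decomposition $\lambda=\iota\circ\chi$ of Lemma \ref{lema:declambda} is formally identical to the one in \eqref{eq:declambda}. Throughout I write $\widetilde{L}=\widetilde{L_1}\widetilde{L_2}$, identify $G/G'$ with $G_1/G_1'\times G_2/G_2'$ via Lemma \ref{lem:psibij}, and view $G$ as a subgroup of $G_1\times G_2$, so that each $g\in G$ decomposes as $g=g_1g_2$ with $g_i\in G_i$ acting as $g_i$ on $\widetilde{L_i}$.

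For the first assertion I would argue by faithfully flat (Galois) descent. Since $\iota$ is a group monomorphism (Lemma \ref{lema:declambda}), it restricts to a group isomorphism $N_1\times N_2\xrightarrow{\sim}N$, which extends to an isomorphism of $\widetilde{L}$-algebras $\widetilde{L}[N_1\times N_2]\cong\widetilde{L}[N]$; combined with $\widetilde{L}[N_1\times N_2]\cong\widetilde{L}[N_1]\otimes_{\widetilde{L}}\widetilde{L}[N_2]$ this gives $\widetilde{L}[N]\cong\widetilde{L}[N_1]\otimes_{\widetilde{L}}\widetilde{L}[N_2]$. On the other hand, base-changing the Greither-Pareigis isomorphism $\widetilde{L_i}\otimes_KH_i\cong\widetilde{L_i}[N_i]$ along $\widetilde{L_i}\hookrightarrow\widetilde{L}$ yields $\widetilde{L}\otimes_KH_i\cong\widetilde{L}[N_i]$, whence $\widetilde{L}\otimes_K(H_1\otimes_KH_2)\cong\widetilde{L}[N_1]\otimes_{\widetilde{L}}\widetilde{L}[N_2]\cong\widetilde{L}[N]\cong\widetilde{L}\otimes_KH$. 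It then remains to verify that the resulting $\widetilde{L}$-algebra isomorphism is $G$-equivariant for the natural descent data: on $\widetilde{L}\otimes_K(H_1\otimes_KH_2)$ the group $G$ acts through the first factor, whereas on $\widetilde{L}[N]$ it acts by evaluation on $\widetilde{L}$ and by conjugation by $\lambda(G)$ on $N$. Here I would invoke that $g$ acts on $\widetilde{L_i}$ as $g_i$ and that the conjugation formula in the proof of Proposition \ref{pro:prodhgstrsubg} gives $\lambda(g)\iota(\eta,\mu)\lambda(g)^{-1}=\iota(\lambda_1(g_1)\eta\lambda_1(g_1)^{-1},\lambda_2(g_2)\mu\lambda_2(g_2)^{-1})$, so that the conjugation action on $N$ splits componentwise and matches the product action on $\widetilde{L}[N_1]\otimes_{\widetilde{L}}\widetilde{L}[N_2]$. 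Taking $G$-invariants then yields $H_1\otimes_KH_2\cong H$ as $K$-algebras.

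For the second assertion I would compute directly with the action formula recalled after Theorem \ref{thm:greitherpareigis}. Writing $h_i=\sum_l c_l^{(i)}\eta_l^{(i)}$ with $c_l^{(i)}\in\widetilde{L_i}$ and $\eta_l^{(i)}\in N_i$, the identification of the first part sends the product $h_1h_2$ to $\sum_{l,m}c_l^{(1)}c_m^{(2)}\,\iota(\eta_l^{(1)},\eta_m^{(2)})$ in $\widetilde{L}[N]^G$. Since $\iota$ is a homomorphism and $1_GG'$ corresponds to $(1_{G_1}G_1',1_{G_2}G_2')$, I obtain $\iota(\eta_l^{(1)},\eta_m^{(2)})^{-1}(1_GG')=\bigl((\eta_l^{(1)})^{-1}(1_{G_1}G_1'),(\eta_m^{(2)})^{-1}(1_{G_2}G_2')\bigr)$; denoting these cosets by $\sigma_lG_1'$ and $\tau_mG_2'$, a representative of the corresponding coset of $G/G'$ acts on $\alpha_1\alpha_2$ as $\sigma_l$ on $\alpha_1\in L_1$ and as $\tau_m$ on $\alpha_2\in L_2$, using that each $\alpha_i$ is $G_i'$-fixed (so the choice of representative is immaterial) and that $G\hookrightarrow G_1\times G_2$ acts componentwise on $\widetilde{L}=\widetilde{L_1}\widetilde{L_2}$. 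Hence $(h_1h_2)\cdot(\alpha_1\alpha_2)=\sum_{l,m}c_l^{(1)}c_m^{(2)}\sigma_l(\alpha_1)\tau_m(\alpha_2)$, which factors as $\bigl(\sum_l c_l^{(1)}\sigma_l(\alpha_1)\bigr)\bigl(\sum_m c_m^{(2)}\tau_m(\alpha_2)\bigr)=(h_1\cdot\alpha_1)(h_2\cdot\alpha_2)$, as desired.

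I expect the main obstacle to be the $G$-equivariance check in the first part: one must confirm that the purely algebraic isomorphism $\widetilde{L}\otimes_K(H_1\otimes_KH_2)\cong\widetilde{L}[N]$ intertwines the descent $G$-action (acting only on scalars) with the Greither-Pareigis $G$-action (acting on scalars and by conjugation on $N$). This hinges entirely on the componentwise conjugation formula from Proposition \ref{pro:prodhgstrsubg} together with the componentwise description of the $G$-action on $\widetilde{L}$ coming from $G\hookrightarrow G_1\times G_2$; once these compatibilities are in place, descent is automatic. The computation in the second part is then routine given the explicit action formula, the only subtlety being to track coset representatives, which is handled by the $G_i'$-invariance of the $\alpha_i$.
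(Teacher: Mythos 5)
Your proposal is correct and takes essentially the same route as the paper: part 2 is the paper's computation verbatim, and for part 1 the paper simply writes down the map $f(h_1\otimes h_2)=\sum_{i,j}h_i^{(1)}h_j^{(2)}\iota(\eta_i^{(1)},\eta_j^{(2)})$ directly on $G$-invariants (i.e.\ the restriction of your $\widetilde{L}$-algebra isomorphism), verifies that its image lies in $H$ using the same componentwise conjugation formula $g\cdot\iota(\eta_1,\eta_2)=\iota(g_1\cdot\eta_1,g_2\cdot\eta_2)$, and concludes by a dimension count instead of invoking descent explicitly. The equivariance check you flag as the main obstacle is precisely the computation the paper carries out, so there is no gap.
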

\begin{proof}
Let $N_i$ be the permutation subgroup corresponding to $H_i$ under the Greither-Pareigis correspondence, so that $H_i=\widetilde{L_i}[N_i]^{G_i}$. Then $H=\widetilde{L}[N]^G$, where $N=\iota(N_1\times N_2)$.
\begin{itemize}
    \item[1.] First of all, given $g=g_1g_2\in G$ with $g_1\in G_1$ and $g_2\in G_2$, $\eta_1\in N_1$ and $\eta_2\in N_2$, we have that $$g\cdot\iota(\eta_1,\eta_2)=\lambda(g)\iota(\eta_1,\eta_2)\lambda(g^{-1})=\iota(\chi(g)(\eta_1,\eta_2)\chi(g^{-1})),$$ and since $\chi(g)=(\lambda_1(g_1),\lambda_2(g_2))$, we have $$\chi(g)(\eta_1,\eta_2)\chi(g^{-1})=(\lambda_1(g_1)\eta_1\lambda_1(g_1^{-1}),\lambda_2(g_2)\eta_2\lambda_2(g_2^{-1}))=(g_1\cdot\eta_1,g_2\cdot\eta_2).$$ We conclude that $$g\cdot\iota(\eta_1,\eta_2)=\iota(g_1\cdot\eta_1,g_2\cdot\eta_2).$$
    
     Write $N_1=\{\eta_i^{(1)}\}_{i=1}^{n_1}$ and $N_2=\{\eta_j^{(2)}\}_{j=1}^{n_2}$. Let us consider the map $f\colon H_1\otimes_KH_2\longrightarrow \widetilde{L}[N]$ defined by $$f(h_1\otimes h_2)=\sum_{i=1}^{n_1}\sum_{j=1}^{n_2}h_i^{(1)}h_j^{(2)}\iota(\eta_i^{(1)},\eta_j^{(2)}),$$ where $h_1=\sum_{i=1}^{n_1}h_i^{(1)}\eta_i^{(1)}\in H_1$ and $h_2=\sum_{j=1}^{n_2}h_j^{(2)}\eta_j^{(2)}\in H_2$, with $h_i^{(1)}\in\widetilde{L}_1$ and $h_j^{(2)}\in\widetilde{L}_2$. It is clear that $f$ is a morphism of $K$-algebras. Let us check that it is injective. Indeed, if $h_1\otimes h_2\in\mathrm{Ker}(f)$, this means that $$\sum_{i=1}^{n_1}\sum_{j=1}^{n_2}h_i^{(1)}h_j^{(2)}\iota(\eta_i^{(1)},\eta_j^{(2)})=0.$$ Regarding this as an equality in the $\widetilde{L}$-vector space $\widetilde{L}[N]$, since $\{\iota(\eta_i^{(1)},\eta_j^{(2)})\}_{1\leq i\leq n_1,1\leq j\leq n_2}$ is an $\widetilde{L}$-basis of $\widetilde{L}[N]$, we obtain that $h_i^{(1)}h_j^{(2)}=0$ for every $1\leq i\leq n_1$ and every $1\leq j\leq n_2$. This gives immediately that $h_1\otimes h_2=0$.
     
     It is enough to prove that $f(H_1\otimes_K H_2)\subseteq H$, as we have that $\mathrm{dim}_K(H)=\mathrm{dim}_K(H_1\otimes_KH_2)=\mathrm{dim}_K(f(H_1\otimes_KH_2))$ and then it will follow that $f$ is an isomorphism over its image. Let $h_1\in H_1$ and $h_2\in H_2$ be as before and let $g=g_1g_2\in G$. Then 
    \begin{equation*}
        \begin{split}
            g\cdot(f(h_1\otimes h_2))&=\sum_{i=1}^{n_1}\sum_{j=1}^{n_2}g_1(h_i^{(1)})g_2(h_j^{(2)})\iota(g_1\cdot\eta_i^{(1)},g_2\cdot\eta_j^{(2)})
            \\&=f(g_1\cdot h_1\otimes g_2\cdot h_2)=f(h_1\otimes h_2),
        \end{split}
    \end{equation*} which proves that $f(H_1\otimes_KH_2)\subseteq H$ as desired.
    
    \item[2.] Note that in this statement we have identified $H$ with $H_1\otimes_KH_2$ via the isomorphism $f$ in the first statement and $L$ with $L_1\otimes_K L_2$ via the canonical map (since $L_1$ and $L_2$ are $K$-linearly disjoint). Write $h_1$ and $h_2$ as in the previous proof. Following the general description at \eqref{hopfactiongp} of a Hopf action, we can write $$f(h_1\otimes h_2)\cdot(\alpha_1\alpha_2)=\sum_{i=1}^{n_1}\sum_{j=1}^{n_2}h_i^{(1)}h_j^{(2)}\iota(\eta_i^{(1)},\eta_j^{(2)})^{-1}(1_GG')(\alpha_1\alpha_2).$$ For each $1\leq i\leq n_1$ and $1\leq j\leq n_2$, we have $\iota(\eta_i^{(1)},\eta_j^{(2)})^{-1}(1_GG')=(\eta_i^{(1)})^{-1}(1_{G_1}G_1')(\eta_j^{(2)})^{-1}(1_{G_2}G_2')$. Let $k_i^{(1)}\in G_1$ and $k_j^{(2)}\in G_2$ such that $k_i^{(1)}G_1'=(\eta_i^{(1)})^{-1}(1_{G_1}G_1')$ and $k_j^{(2)}G_2'=(\eta_j^{(2)})^{-1}(1_{G_2}G_2')$. Then $$(\eta_i^{(1)})^{-1}(1_{G_1}G_1')(\eta_j^{(2)})^{-1}(1_{G_2}G_2')=(k_i^{(1)}k_j^{(2)})G',$$ and hence \begin{equation}\label{eq1}
        \begin{split}
            f(h_1\otimes h_2)\cdot(\alpha_1\alpha_2)&=\sum_{i=1}^{n_1}\sum_{j=1}^{n_2}h_i^{(1)}h_j^{(2)}(\eta_i^{(1)})^{-1}(1_{G_1}G_1')(\eta_j^{(2)})^{-1}(1_{G_2}G_2')(\alpha_1\alpha_2)\\&=\sum_{i=1}^{n_1}\sum_{j=1}^{n_2}h_i^{(1)}h_j^{(2)}[k_i^{(1)}k_j^{(2)}(\alpha_1\alpha_2)].
        \end{split}
    \end{equation} Following the paragraph after Proposition \ref{pro:prodgalois}, we have $$k_i^{(1)}k_j^{(2)}(\alpha_1\alpha_2)=(k_i^{(1)}k_j^{(2)})|_{L_1}(\alpha_1)(k_i^{(1)}k_j^{(2)})|_{L_2}(\alpha_2).$$ The strong disjointness conditions on $L_1/K$ and $L_2/K$ imply immediately that $L_1\cap\widetilde{L_2}=K$ and $L_2\cap\widetilde{L_1}=K$, whence $(k_i^{(1)}k_j^{(2)})|_{L_1}=k_i^{(1)}$ and $(k_i^{(1)}k_j^{(2)})|_{L_2}=k_j^{(2)}$. Then $$k_i^{(1)}k_j^{(2)}(\alpha_1\alpha_2)=k_i^{(1)}(\alpha_1)k_j^{(2)}(\alpha_2).$$ Carrying this to \eqref{eq1}, we have \begin{equation*}
        \begin{split}
            f(h_1\otimes h_2)\cdot(\alpha_1\alpha_2)&=\sum_{i=1}^{n_1}\sum_{j=1}^{n_2}h_i^{(1)}h_j^{(2)}k_i^{(1)}(\alpha_1)k_j^{(2)}(\alpha_2)
            \\&=\Big(\sum_{i=1}^{n_1}h_i^{(1)}k_i^{(1)}(\alpha_1)\Big)\Big(\sum_{j=1}^{n_2}h_j^{(2)}k_j^{(2)}(\alpha_2)\Big)
            \\&=\Big(\sum_{i=1}^{n_1}h_i^{(1)}(\eta_i^{(1)})^{-1}(1_{G_1}G_1')(\alpha_1)\Big)\Big(\sum_{j=1}^{n_2}h_j^{(2)}(\eta_j^{(2)})^{-1}(1_{G_2}G_2')(\alpha_2)\Big)
            \\&=(h_1\cdot\alpha_1)(h_2\cdot\alpha_2).
        \end{split}
    \end{equation*}
\end{itemize}
\end{proof}

From now on, we identify $H$ with $H_1\otimes_K H_2$ by means of $f$ by writing $f(h_1\otimes h_2)=h_1h_2$ for $h_1\in H_1$ and $h_2\in H_2$. 

Now, we want to find a relation between the matrices describing the action (see Section \ref{sect:hgmodtheory}). In \cite[Theorem 5.10]{gilrioinduced}, it was shown that the matrix of the action of an induced Hopf-Galois structure is the Kronecker product of the matrices of the Hopf-Galois structures from which it is built, up to permutation of rows. In the proof of that result, we only use the fact from Proposition \ref{pro:prodindhgstr} that an induced Hopf-Galois structure is isomorphic to the tensor product of the inducing Hopf-Galois structures. Since the same is true for product Hopf-Galois structures from Proposition \ref{pro:prodhgstr}, we deduce the following.

\begin{pro}\label{pro:prodmatr} Let $L_1/K$ and $L_2/K$ be strongly disjoint almost classically Galois extensions. For $i\in\{1,2\}$, let $H_i$ be a Hopf-Galois structure on $L_i/K$ and let $H$ be the product Hopf-Galois structure of these on $L/K$. Then, there is a permutation matrix $P\in\mathrm{GL}_{n^2}(\mathcal{O}_K)$ such that $$PM(H,L)=M(H_1,L_1)\otimes M(H_2,L_2).$$
\end{pro}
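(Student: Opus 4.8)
The plan is to follow the route indicated just before the statement. The proof of \cite[Theorem 5.10]{gilrioinduced} for induced Hopf-Galois structures rests only on the two structural facts recorded in Proposition \ref{pro:prodindhgstr}: that the total Hopf algebra is the tensor product of the factors, and that its action respects the product decomposition. By Proposition \ref{pro:prodhgstr}, the product Hopf-Galois structure enjoys exactly these two properties, so the very same argument transfers. Let me nonetheless describe the computation explicitly, since its only real content is the bookkeeping of indices.

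First I would fix $K$-bases $W_i=\{w_s^{(i)}\}$ of $H_i$ and $B_i=\{\gamma_p^{(i)}\}$ of $L_i$ for $i\in\{1,2\}$, and form the product bases $W=\{w_s^{(1)}w_t^{(2)}\}$ of $H$ and $B=\{\gamma_p^{(1)}\gamma_q^{(2)}\}$ of $L$. These are genuine $K$-bases: $B$ because $L\cong L_1\otimes_KL_2$ by linear disjointness, and $W$ because $H\cong H_1\otimes_KH_2$ by the first part of Proposition \ref{pro:prodhgstr}. Writing $w_s^{(1)}\cdot\gamma_p^{(1)}=\sum_{p'}a_{p'p}^{(s)}\gamma_{p'}^{(1)}$ and $w_t^{(2)}\cdot\gamma_q^{(2)}=\sum_{q'}b_{q'q}^{(t)}\gamma_{q'}^{(2)}$, the scalars $a_{p'p}^{(s)}$ and $b_{q'q}^{(t)}$ are precisely the entries of $M(H_1,L_1)$ and $M(H_2,L_2)$ with respect to these bases.

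The key step is then the second part of Proposition \ref{pro:prodhgstr}, which gives
$$(w_s^{(1)}w_t^{(2)})\cdot(\gamma_p^{(1)}\gamma_q^{(2)})=(w_s^{(1)}\cdot\gamma_p^{(1)})(w_t^{(2)}\cdot\gamma_q^{(2)})=\sum_{p',q'}a_{p'p}^{(s)}b_{q'q}^{(t)}\,\gamma_{p'}^{(1)}\gamma_{q'}^{(2)}.$$
Thus every entry of $M(H,L)$ is the product of one entry of $M(H_1,L_1)$ and one of $M(H_2,L_2)$, which is exactly the shape of a Kronecker product. Ordering the columns, indexed by the $H$-basis, lexicographically in $(s,t)$ makes them coincide with the columns of $M(H_1,L_1)\otimes M(H_2,L_2)$. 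The rows, however, are indexed in $M(H,L)$ by the pair (input block $\gamma_p^{(1)}\gamma_q^{(2)}$, output coordinate $\gamma_{p'}^{(1)}\gamma_{q'}^{(2)}$), that is, in the order $(p,q,p',q')$, whereas $M(H_1,L_1)\otimes M(H_2,L_2)$ orders its rows as $(p,p',q,q')$. These two orderings of the $n^2=n_1^2n_2^2$ rows differ by the permutation interchanging the middle two positions, i.e.\ swapping $q$ and $p'$; let $P$ be the corresponding permutation matrix. Since permutation matrices have entries in $\{0,1\}\subseteq\mathcal{O}_K$ and are invertible over $\mathbb{Z}$, we get $P\in\mathrm{GL}_{n^2}(\mathcal{O}_K)$ and $PM(H,L)=M(H_1,L_1)\otimes M(H_2,L_2)$.

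The one point demanding care, and the only genuine obstacle, is the exact matching of the two row orderings: once the product bases are chosen and the second part of Proposition \ref{pro:prodhgstr} is applied, the entrywise identification is automatic, and what remains is simply to read off the permutation relating the block structure of $M(H,L)$ to that of the Kronecker product. Everything else is the routine verification that the product bases are bases and that the coefficient tables assemble as claimed.
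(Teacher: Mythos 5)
Your proposal is correct and takes essentially the same route as the paper, which likewise deduces the result by noting that the proof of the induced-structure analogue in \cite{gilrioinduced} uses only the two properties established in Proposition \ref{pro:prodhgstr}. Your explicit Kronecker-product bookkeeping, including the identification of the row permutation as the swap of the middle two indices in $(p,q,p',q')$ versus $(p,p',q,q')$, is a sound and more detailed rendering of that same argument.
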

\begin{proof}
It is completely analogous to the proof of \cite[Theorem 5.10]{gilrioinduced}.
\end{proof}

Let us consider the setting in Section \ref{sect:hgmodtheory}. Using the method described therein, we can use the relation between the matrices of the action to obtain an analogue of Proposition \ref{pro:indhgmstr} for product Hopf-Galois structures.

\begin{coro}\label{coro:prodhgmstr} Let $K$ be the fraction field of a PID $\mathcal{O}_K$. Let $L_1/K$ and $L_2/K$ be strongly disjoint almost classically Galois extensions, and assume that they are also arithmetically disjoint. For $i\in\{1,2\}$, let $H_i$ be a Hopf-Galois structure on $L_i/K$ and let $H$ be the product Hopf-Galois structure of these on $L/K$, where $L=L_1L_2$.
\begin{enumerate}
    \item $\mathfrak{A}_H=\mathfrak{A}_{H_1}\otimes_{\mathcal{O}_K}\mathfrak{A}_{H_2}$.
    \item If $\mathcal{O}_{L_i}$ is $\mathfrak{A}_{H_i}$-free for $i\in\{1,2\}$, then $\mathcal{O}_L$ is $\mathfrak{A}_H$-free.
\end{enumerate}
\end{coro}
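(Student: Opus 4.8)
The plan is to reduce everything to the determination of associated orders through reduced matrices (Proposition \ref{pro:basisassocorder}), exploiting the Kronecker factorization of the matrix of the action supplied by Proposition \ref{pro:prodmatr}. The arithmetic disjointness hypothesis enters only to guarantee $\mathcal{O}_L=\mathcal{O}_{L_1}\otimes_{\mathcal{O}_K}\mathcal{O}_{L_2}$: this ensures that if $B_i$ is a $K$-integral basis of $L_i$ then $B_1\otimes B_2$ is a $K$-integral basis of $L$, so that the matrices of the action can be computed in compatible bases. Using the identification $H\cong H_1\otimes_K H_2$ of Proposition \ref{pro:prodhgstr}, I would fix the $K$-basis $W=W_1\otimes W_2$ of $H$, which is precisely the choice for which the relation $PM(H,L)=M(H_1,L_1)\otimes M(H_2,L_2)$ of Proposition \ref{pro:prodmatr} holds, with $P$ a row permutation.

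The central step is to prove that if $D_i$ is a reduced matrix for $M(H_i,L_i)$, then $D_1\otimes D_2$ is a reduced matrix for $M(H,L)$. Starting from unimodular $U_i\in\mathrm{GL}_{n_i^2}(\mathcal{O}_{L_i})$ with $U_iM(H_i,L_i)=\left(\begin{smallmatrix}D_i\\O\end{smallmatrix}\right)$, the matrix $U_1\otimes U_2$ lies in $\mathrm{GL}_{n^2}(\mathcal{O}_L)$ (its inverse is $U_1^{-1}\otimes U_2^{-1}$, and both have entries in $\mathcal{O}_{L_1}\otimes_{\mathcal{O}_K}\mathcal{O}_{L_2}=\mathcal{O}_L$), and
$$(U_1\otimes U_2)(M(H_1,L_1)\otimes M(H_2,L_2))=\begin{pmatrix}D_1\\O\end{pmatrix}\otimes\begin{pmatrix}D_2\\O\end{pmatrix}.$$
The nonzero rows of this Kronecker product are exactly those indexed by a row of $D_1$ together with a row of $D_2$, and together they form the block $D_1\otimes D_2$; a further row permutation $Q$ (unimodular over $\mathcal{O}_K\subseteq\mathcal{O}_L$) moves them to the top, yielding $Q(U_1\otimes U_2)P\,M(H,L)=\left(\begin{smallmatrix}D_1\otimes D_2\\O\end{smallmatrix}\right)$ with $Q(U_1\otimes U_2)P$ unimodular over $\mathcal{O}_L$. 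This establishes the claim.

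For the first item, Proposition \ref{pro:basisassocorder} then gives an $\mathcal{O}_K$-basis of $\mathfrak{A}_H$ as the columns of $(D_1\otimes D_2)^{-1}=D_1^{-1}\otimes D_2^{-1}$. Read in the basis $W=W_1\otimes W_2$, each such column is the Kronecker product of a column of $D_1^{-1}$ with a column of $D_2^{-1}$, hence represents $v\,w$ for $v$ in the basis of $\mathfrak{A}_{H_1}$ and $w$ in the basis of $\mathfrak{A}_{H_2}$ furnished by Proposition \ref{pro:basisassocorder}; these products form an $\mathcal{O}_K$-basis of $\mathfrak{A}_{H_1}\otimes_{\mathcal{O}_K}\mathfrak{A}_{H_2}$, giving $\mathfrak{A}_H=\mathfrak{A}_{H_1}\otimes_{\mathcal{O}_K}\mathfrak{A}_{H_2}$. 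For the second item, if $\beta_i$ generates $\mathcal{O}_{L_i}$ freely over $\mathfrak{A}_{H_i}$, I would take $\beta=\beta_1\beta_2$; by part 2 of Proposition \ref{pro:prodhgstr}, $(h_1h_2)\cdot\beta=(h_1\cdot\beta_1)(h_2\cdot\beta_2)$, so as $h_i$ ranges over $\mathfrak{A}_{H_i}$ the right-hand sides generate $\mathcal{O}_{L_1}\otimes_{\mathcal{O}_K}\mathcal{O}_{L_2}=\mathcal{O}_L$. Thus $h\mapsto h\cdot\beta$ is a surjective $\mathcal{O}_K$-linear map between free $\mathcal{O}_K$-modules of equal rank $n_1n_2$, hence an isomorphism, so $\beta$ is a free $\mathfrak{A}_H$-generator.

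The main obstacle is the bookkeeping of the reduced-matrix manipulation, namely verifying that the Kronecker product of reduced forms, after the interspersed zero rows are permuted away, is again in reduced form, and confirming that $U_1\otimes U_2$ (and the permutations) are genuinely unimodular over $\mathcal{O}_L$ rather than merely over the fraction field; the rest follows formally from the Kronecker relation and the identity $(D_1\otimes D_2)^{-1}=D_1^{-1}\otimes D_2^{-1}$. This mirrors the argument for induced Hopf-Galois structures (Proposition \ref{pro:indhgmstr}), the only substantive change being the substitution of Proposition \ref{pro:prodmatr} for its induced counterpart.
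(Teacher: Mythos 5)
Your proof is correct and follows essentially the same route as the paper: reduced matrices and the Kronecker factorization of $M(H,L)$ from Proposition \ref{pro:prodmatr} for the identification $\mathfrak{A}_H=\mathfrak{A}_{H_1}\otimes_{\mathcal{O}_K}\mathfrak{A}_{H_2}$, and the product $\beta_1\beta_2$ of free generators together with $\mathcal{O}_L=\mathcal{O}_{L_1}\otimes_{\mathcal{O}_K}\mathcal{O}_{L_2}$ for freeness. If anything you are slightly more careful than the paper in making explicit the row permutation $Q$ that collects the nonzero rows of $\bigl(\begin{smallmatrix}D_1\\O\end{smallmatrix}\bigr)\otimes\bigl(\begin{smallmatrix}D_2\\O\end{smallmatrix}\bigr)$ into the block $D_1\otimes D_2$.
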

\begin{proof}
Let $B_i$ be an $\mathcal{O}_K$-basis of $\mathcal{O}_{L_i}$ for $i\in\{1,2\}$. The hypothesis that $L_1/K$ and $L_2/K$ are arithmetically disjoint gives that $\mathcal{O}_L=\mathcal{O}_{L_1}\otimes_{\mathcal{O}_K}\mathcal{O}_{L_2}$, so the product of the bases $B_1$ and $B_2$, consisting in all the possible products of an element of $B_1$ and an element of $B_2$, is an $\mathcal{O}_K$-basis of $\mathcal{O}_L$.
\begin{enumerate}
    \item For $i\in\{1,2\}$, let $D_i$ be a reduced matrix of $M(H_i,L_i)$. By definition, there is a unimodular matrix $U_i\in\mathrm{GL}_{n_i^2}(\mathcal{O}_K)$ such that $U_iM(H_i,L_i)=\begin{pmatrix}D_i \\ \hline \\[-2ex] O\end{pmatrix}$, where $M(H_i,L_i)$ is written by fixing the basis $B_i$ in $L_i$ for $i\in\{1,2\}$. Now, the Kronecker product $U_1\otimes U_2\in\mathrm{GL}_{n^2}(\mathcal{O}_K)$ is a unimodular matrix that satisfies \begin{equation*}
        \begin{split}
            &(U_1\otimes U_2)(M(H_1,L_1)\otimes M(H_2,L_2))=(U_1M(H_1,L_1))\otimes(U_2M(H_2,L_2))=\\&\begin{pmatrix}D_1 \\ \hline \\[-2ex] O\end{pmatrix}\otimes\begin{pmatrix}D_2 \\ \hline \\[-2ex] O\end{pmatrix}=\begin{pmatrix}D_1\otimes D_2 \\ \hline \\[-2ex] O\end{pmatrix}.
        \end{split}
    \end{equation*} By Proposition \ref{pro:prodmatr}, there is a unimodular matrix $P\in\mathrm{GL}_{n^2}(\mathcal{O}_K)$ such that $PM(H,L)=M(H_1,L_1)\otimes M(H_2,L_2)$, where $M(H,L)$ is written by fixing the basis $B$ in $L$. Then, $$[(U_1\otimes U_2)P]M(H,L)=\begin{pmatrix}D_1\otimes D_2 \\ \hline \\[-2ex] O\end{pmatrix},$$ and the matrix $(U_1\otimes U_2)P$ is unimodular. Moreover, $B$ is an integral basis of $L$. Then, $D_1\otimes D_2$ is a reduced matrix of $M(H,L)$. Now, the result is obtained by applying Proposition \ref{pro:basisassocorder}.
    \item[2.] For $i\in\{1,2\}$, let $V_i=\{v_j^{(i)}\}_{j=1}^{n_i}$ be an $\mathcal{O}_K$-basis of $\mathcal{O}_{L_i}$ and let $\gamma_i\in\mathcal{O}_{L_i}$ be an $\mathfrak{A}_{H_i}$-generator of $\mathcal{O}_{L_i}$. Then $\{v_j^{(i)}\cdot\gamma_i\}_{j=1}^{n_i}$ is an $\mathcal{O}_K$-basis of $\mathcal{O}_L$ for $i\in\{1,2\}$. Since $L_1/K$ and $L_2/K$ are arithmetically disjoint, the product of these bases is an $\mathcal{O}_K$-basis of $\mathcal{O}_L$. Now, this is formed by the elements of the product $V_1$ and $V_2$ acting on $\gamma=\gamma_1\gamma_2\in\mathcal{O}_L$, and by the previous part that product is an $\mathcal{O}_K$-basis of $\mathcal{O}_L$. Therefore, $\gamma$ is an $\mathfrak{A}_H$-free generator of $\mathcal{O}_L$ and in particular $\mathcal{O}_L$ is $\mathfrak{A}_H$-free.
\end{enumerate}
\end{proof}

For almost classically Galois extensions with associated rings of integers, the conditions of arithmetic and strong disjointness are not redundant. Clearly, two extensions that are strongly disjoint need not be arithmetically disjoint: a pair of linearly disjoint Galois extensions are automatically strongly disjoint, and there are many examples of such pairs that are not arithmetically disjoint. On the other hand, in the following, we will exhibit two arithmetically disjoint extensions that are not strongly disjoint.

\begin{example}\normalfont Let $L_1=\mathbb{Q}_3(\alpha)$, where $\alpha$ is a root of $f_1(x)=x^3+3x^2+3$, and let $L_2=\mathbb{Q}_3(\sqrt{2})$. At the LMFDB database, the extension $L_1/\mathbb{Q}_3$ corresponds to \href{https://www.lmfdb.org/padicField/3.3.4.4}{p-adic field 3.3.4.4}, while the extension $L_2/\mathbb{Q}_3$ corresponds to \href{https://www.lmfdb.org/padicField/3.2.0.1}{p-adic field 3.2.0.1}. Since the extensions $L_1/\mathbb{Q}_3$ and $L_2/\mathbb{Q}_3$ have coprime degrees, they are linearly disjoint. Moreover, since $L_1/\mathbb{Q}_3$ is ramified and $L_2/\mathbb{Q}_3$ is unramified, they are arithmetically disjoint. The extension $L_1/\mathbb{Q}_3$ is almost classically Galois with complement $M_1=\mathbb{Q}_3(\sqrt{-1})=\mathbb{Q}_3(\sqrt{2})$ (see \cite[Example 3.4]{gildegpdihedral}), and $L_2\cap M_1\neq\mathbb{Q}_3$. We conclude that $L_1/\mathbb{Q}_3$ and $L_2/\mathbb{Q}_3$ are not strongly disjoint.
\end{example}

\section{Kummer theory for Galois extensions}\label{kummergalois}

In this section we will review several well known facts for Kummer Galois extensions. From now on, we establish the following conventions. For each positive integer $m$, we fix a primitive $m$-th root of unity $\zeta_m$ such that if $m_1\mid m_2$, then $\zeta_{m_2}^{\frac{m_2}{m_1}}=\zeta_{m_1}$. Moreover, we write $n$ for a positive integer number and $K$ for a field whose characteristic is coprime to $n$. Let us fix an algebraic closure $\overline{K}$ of $K$, so that $\zeta_n\in\overline{K}$.

Given $a\in K$, the polynomial $x^n-a$ has $n$ different roots in $\overline{K}$, namely $\zeta_n^i\alpha$ with $0\leq i\leq n-1$. Let us assume that the polynomial $x^n-a$ is irreducible over $K$. Then, any of its roots generates a degree $n$ extension of $K$, which we denote by $L=K(\sqrt[n]{a})$, or $L=K(\alpha)$ with $\alpha^n=a\in K$. The field $L$ is determined possibly up to $K$-isomorphism. From now on, each time we consider an element $\alpha\in\overline{L}$ with $\alpha^n=a\in K$, we will assume that the polynomial $x^n-a$ is irreducible over $K$.

Now, assume that $K$ contains the $n$-th roots of unity (equivalently $\zeta_n\in K$), so that $L=K(\sqrt[n]{a})$ does not depend on the choice of the root of $x^n-a$. Then $L/K$ is Galois. The assumption that $x^n-a$ is irreducible gives that $\alpha^k\notin K$ for every $k<n$ and every root $\alpha$ of $x^n-a$. If $G$ is the Galois group of $L/K$, the automorphism $\sigma\in G$ such that $\sigma(\alpha)=\zeta_n\alpha$ has clearly order $n$, and therefore $L/K$ is cyclic. Conversely, if $L/K$ is cyclic with Galois group $G=\langle\sigma\rangle$, we know from Hilbert theorem 90 that there is some $\alpha\in L$ such that $\sigma(\alpha)=\zeta_n\alpha$. Since $\sigma$ is a generator of $G$, it follows that $\alpha^n\in K$ and no smaller power of $\alpha$ is in $K$. In summary, we have the following well known result.

\begin{pro}\label{pro:firstcharactcyclic} Assume that $\zeta_n\in K$. Let $L/K$ be a Galois extension of degree $n$ with Galois group $G$. The following statements are equivalent:
\begin{itemize}
    \item[(i)] $L/K$ is cyclic.
    \item[(ii)] $L=K(\alpha)$ for some $\alpha\in L$ such that $\alpha^n\in K$ and no smaller power of $\alpha$ is in $K$.
\end{itemize}
\end{pro}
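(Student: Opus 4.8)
The plan is to prove the two implications separately, using the standing hypothesis $\zeta_n\in K$ throughout, so that $\langle\zeta_n\rangle$ is a cyclic subgroup of $K^*$ fixed pointwise by $G$.

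For the implication (ii)~$\Rightarrow$~(i), I would start from $L=K(\alpha)$ with $\alpha^n=a\in K$. Since $\zeta_n\in K$, all $n$ roots $\zeta_n^i\alpha$ of $x^n-a$ already lie in $L$, so every $\sigma\in G$ sends $\alpha$ to $\zeta_n^{i(\sigma)}\alpha$ for a unique exponent $i(\sigma)$. I would then introduce the map $\chi\colon G\longrightarrow\langle\zeta_n\rangle$ given by $\chi(\sigma)=\zeta_n^{i(\sigma)}$. The two key checks are that $\chi$ is a group homomorphism, which follows from $\zeta_n$ being fixed by $G$ (so that $\sigma\tau(\alpha)=\zeta_n^{i(\sigma)+i(\tau)}\alpha$), and that $\chi$ is injective, since $\sigma(\alpha)=\alpha$ forces $\sigma$ to fix $K(\alpha)=L$ and hence $\sigma=\mathrm{id}$. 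As $\langle\zeta_n\rangle$ is cyclic, $G$ embeds into a cyclic group and is therefore cyclic. This direction is essentially formal.

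For the implication (i)~$\Rightarrow$~(ii), write $G=\langle\sigma\rangle$ of order $n$. The main tool is Hilbert's Theorem~90. Since $\zeta_n\in K$, its norm is $N_{L/K}(\zeta_n)=\zeta_n^n=1$, so Hilbert~90 produces a nonzero $\alpha\in L$ with $\sigma(\alpha)=\zeta_n\alpha$. From here the remaining steps are direct verifications: first, $\sigma(\alpha^n)=\zeta_n^n\alpha^n=\alpha^n$, so $\alpha^n$ is $\sigma$-invariant and hence lies in $L^G=K$; second, the conjugates $\sigma^k(\alpha)=\zeta_n^k\alpha$ for $0\le k\le n-1$ are pairwise distinct because $\zeta_n$ is a primitive $n$-th root of unity, which forces $[K(\alpha):K]=n$ and thus $K(\alpha)=L$; third, for $1\le k<n$ one has $\sigma(\alpha^k)=\zeta_n^k\alpha^k\neq\alpha^k$, so no such power of $\alpha$ lies in $K$, giving the minimality.

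The step I expect to require the most care is the invocation of Hilbert~90 in (i)~$\Rightarrow$~(ii): I must align the norm computation with the sign convention in the statement of the theorem so that the resolvent $\alpha$ satisfies $\sigma(\alpha)=\zeta_n\alpha$ rather than $\sigma(\alpha)=\zeta_n^{-1}\alpha$ (which is arranged by applying Hilbert~90 to $\zeta_n$ or to $\zeta_n^{-1}$ as appropriate), and that it can indeed be taken nonzero. Every other part of the argument is a routine consequence of $\zeta_n$ lying in $K$ and being a primitive $n$-th root of unity.
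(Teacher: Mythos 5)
Your proof is correct and follows essentially the same route as the paper's: Hilbert's Theorem 90 applied to $\zeta_n$ for (i)$\Rightarrow$(ii), and the observation that each $\sigma\in G$ multiplies $\alpha$ by a power of $\zeta_n$ for (ii)$\Rightarrow$(i). The only cosmetic difference is that in the latter direction you embed $G$ into $\langle\zeta_n\rangle$ via a character, whereas the paper directly exhibits the automorphism $\sigma(\alpha)=\zeta_n\alpha$ as an order-$n$ generator; the two arguments are interchangeable.
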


This provides a complete characterization of the cyclic extensions of the field $K$ in terms of the adjunction of $n$-th roots of unity. It is possible to extend this result to a characterization of all finite abelian extensions of $K$. It is well known that any finite abelian group is a direct product of cyclic subgroups, and its exponent is the least common multiple of the orders of the cyclic subgroups. This motivates the notion of Kummer extension.

\begin{defi} Let $L/K$ be an extension of fields with characteristic coprime to $n$. Assume that $K$ contains the $n$-th roots of unity. We say that $L/K$ is Kummer with respect to $n$ if $L/K$ is abelian and its Galois group has exponent dividing $n$.
\end{defi}

That is, the Galois group of a Kummer extension with respect to $n$ is a direct product of cyclic groups with exponent dividing $n$. Using the fundamental theorem of Galois theory, the following is proved (see for instance \cite[Theorem 11.4]{morandi}).

\begin{pro}\label{charactkummergalois} Assume that $\zeta_n\in K$. Let $L/K$ be a Galois extension with Galois group $G$. The following statements are equivalent:
\begin{itemize}
    \item[(i)] $L/K$ is Kummer with exponent $n$.
    \item[(ii)] $L=K(\alpha_1,\dots,\alpha_k)$ for some $\alpha_1,\dots,\alpha_k\in L$ such that $\alpha_i^n\in K$ for every $1\leq i\leq k$ and no positive integer smaller than $n$ has this property.
\end{itemize}
\end{pro}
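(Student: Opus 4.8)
The plan is to deduce the proposition from the cyclic case already recorded in Proposition \ref{pro:firstcharactcyclic} by invoking the structure theorem for finite abelian groups, so that a Kummer extension is reconstructed from its cyclic subextensions. Throughout, the hypothesis $\zeta_n\in K$ is what makes the two sides (the group theory of $G$ and the arithmetic of $n$-th roots) match up: it will be used repeatedly to guarantee that every intermediate cyclic extension again contains enough roots of unity, since any divisor $d\mid n$ gives $\zeta_d\in K$.

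For the implication (i) $\Rightarrow$ (ii), I would start from $G$ abelian of exponent $n$ and write $G=\prod_{i=1}^{k}\langle\sigma_i\rangle$ as an internal direct product of cyclic groups, with $\sigma_i$ of order $d_i$ and $\mathrm{lcm}(d_1,\dots,d_k)=n$. For each $i$ set $H_i=\prod_{j\neq i}\langle\sigma_j\rangle$ and $L_i=L^{H_i}$; then $\mathrm{Gal}(L_i/K)\cong\langle\sigma_i\rangle$ is cyclic of order $d_i$, and since $d_i\mid n$ we have $\zeta_{d_i}\in K$, so Proposition \ref{pro:firstcharactcyclic} furnishes $\alpha_i$ with $L_i=K(\alpha_i)$, $\alpha_i^{d_i}\in K$ and $d_i$ minimal for this. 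Because $\bigcap_i H_i=\{1\}$, the compositum of the $L_i$ is all of $L$, giving $L=K(\alpha_1,\dots,\alpha_k)$; moreover $\alpha_i^n=(\alpha_i^{d_i})^{n/d_i}\in K$, while the least $m$ with $\alpha_i^m\in K$ for every $i$ equals $\mathrm{lcm}(d_i)=n$, which is precisely the minimality required in (ii).

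For (ii) $\Rightarrow$ (i), each $\alpha_i$ is a root of $x^n-a_i$ with $a_i=\alpha_i^n\in K$, whose remaining roots are the $\zeta_n^{\,j}\alpha_i\in K(\alpha_i)$ since $\zeta_n\in K$; hence each $K(\alpha_i)/K$ is Galois and so is the compositum $L$. The key device is the map sending $\sigma\in G$ to the tuple of exponents $(c_1(\sigma),\dots,c_k(\sigma))\in(\mathbb{Z}/n\mathbb{Z})^k$ determined by $\sigma(\alpha_i)=\zeta_n^{c_i(\sigma)}\alpha_i$; using $\sigma(\zeta_n)=\zeta_n$ one checks it is an injective group homomorphism, so $G$ is abelian of exponent dividing $n$. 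The part I expect to require the most care is pinning the exponent to be exactly $n$ rather than merely a divisor: there I would let $d_i$ be the order of $\alpha_i$ modulo $K$, note $\mathrm{lcm}(d_i)=n$ by the minimality in (ii), observe that $K(\alpha_i)/K$ is cyclic of degree $d_i$ (again Proposition \ref{pro:firstcharactcyclic}), and use the restriction surjection $G\twoheadrightarrow\mathrm{Gal}(K(\alpha_i)/K)$ to conclude that $d_i$ divides $\exp(G)$; combining this with the previous bound gives $\exp(G)=\mathrm{lcm}(d_i)=n$. This interplay between the minimality condition on $n$ and the exponent of $G$ is the main obstacle, since both directions ultimately hinge on identifying $n$ with $\mathrm{lcm}(d_i)$.
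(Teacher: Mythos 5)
Your proposal is correct, and it follows the standard route: the paper itself does not prove Proposition \ref{charactkummergalois} (it defers to \cite[Theorem 11.4]{morandi}), but the decomposition into cyclic subextensions $L_i=L^{H_i}$ with $G\cong\prod_i G/H_i$, the reduction to Proposition \ref{pro:firstcharactcyclic}, and the identification of $n$ with $\mathrm{lcm}(d_1,\dots,d_k)$ are exactly the ingredients the paper sketches in the paragraph following the statement. The only point worth making explicit is that your appeal to Proposition \ref{pro:firstcharactcyclic} for ``$K(\alpha_i)/K$ is cyclic of degree $d_i$'' in the direction (ii) $\Rightarrow$ (i) presupposes $[K(\alpha_i):K]=d_i$, which needs the short norm argument (the Galois group of $K(\alpha_i)/K$ embeds into $\mu_{d_i}$ via $\sigma\mapsto\sigma(\alpha_i)/\alpha_i$, and if its order were $e<d_i$ then $\alpha_i^{e}\in K$, contradicting minimality); this is routine and is also taken for granted in the paper's discussion.
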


Note that (ii) can be restated by: There exist $a_1,\dots,a_k\in K$ such that $L=K(\sqrt[n]{a_1},\dots,\sqrt[n]{a_k})$ and $n$ is the minimal integer number with the property. Suppose that it is satisfied, so that $L/K$ is Kummer with exponent $n$. Call $L_i=K(\sqrt[n]{a_i})$, $n_i=[L_i:K]$, and let $\alpha_i\in L_i$ with $\alpha_i^n=a_i$. Then there is some $0\leq k\leq n$ such that $\zeta_n^k\alpha_i^{n_i}\in K$ (see \cite[Lemma 3.5]{barreramora1999}), whence $\alpha_i^{n_i}\in K$. Since in addition no smaller power of $\alpha_i$ belongs to $K$, each $L_i/K$ is a cyclic extension of degree exactly $n_i$. We can assume without loss of generality that $\{\alpha_1,\dots,\alpha_k\}$ is a minimal set of Kummer generators for $L/K$, in the sense that none of its proper subsets is a set of Kummer generators. In that case, calling $G_i\coloneqq\mathrm{Gal}(L_i/K)$ for each $1\leq i\leq k$, it is shown that $G=\prod_{i=1}^kG_i$ and $G_i\cong G/H_i$, where $H_i=\prod_{j=1,j\neq i}^kG_i$. Moreover, $i\neq j$ implies that $L_i\cap L_j=L^{H_iH_j}=L^G=K$, so the extensions $L_i/K$ and $L_j/K$ are linearly disjoint. Then, Kummer extensions with exponent $n$ are the compositums of pairwise linearly disjoint cyclic extensions with degree dividing $n$.

Each extension $L/K$ as in (ii) gives rise to a unique finitely generated subgroup of the multiplicative group $K^*/(K^*)^n$. Namely, if $L=K(\sqrt[n]{a_1},\dots,\sqrt[n]{a_k})$, then $\langle a_1,\dots,a_n\rangle$ is a subgroup of $K^*$. Now, multiplying any $a_i$ by an $n$-th power of an element in $K^*$ does not vary the extension $L/K$. Thus, the projection of such a subgroup onto $K^*/(K^*)^n$ is completely determined by $L$. Conversely, each subgroup $B$ of $K^*/(K^*)^n$ is assigned to the extension $L=K(\sqrt[n]{B})$, where $$\sqrt[n]{B}=\{\alpha\in L\,|\,\alpha^n\in B\}.$$ Note that each $\alpha\in\overline{K}$ such that $\alpha^n\in B$ belongs to $L$ because of the assumption that $\zeta_n\in K$.

The correspondence described above is bijective because two extensions of the form $K(\sqrt[n]{a})$, $K(\sqrt[n]{b})$ with $a,b\in K$ are the same if and only if there is some $c\in K^*$ and $r\in\mathbb{Z}_{>0}$ coprime to $n$ such that $a=b^rc^n$ (see \cite[Chapter III, Lemma 3]{casselsfrohlich}). Therefore, from Proposition \ref{charactkummergalois} we recover the following well known result:

\begin{coro}\label{corocorrgalois} Assume that $\zeta_n\in K$. Let $L/K$ be a Galois extension with Galois group $G$. The map $B\mapsto K(\sqrt[n]{B})$ defines a bijective correspondence between:
\begin{itemize}
    \item[1.] Finitely generated subgroups of the multiplicative group $K^*/(K^*)^n$.
    \item[2.] Finite abelian extensions of $K$ with exponent $n$.
\end{itemize}
Within this correspondence, degree $n$ cyclic extensions of $K$ correspond bijectively to cyclic subgroups of $K^*/(K^*)^n$.
\end{coro}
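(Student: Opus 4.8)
The plan is to exhibit the two maps explicitly and to prove that they are mutually inverse. Set $\Theta(B)=K(\sqrt[n]{B})$ for a finitely generated subgroup $B\leq K^*/(K^*)^n$, and for a finite abelian extension $L/K$ of exponent dividing $n$ set $\Psi(L)=\{\overline{a}\in K^*/(K^*)^n\,:\,\sqrt[n]{a}\in L\}$. That $\Theta(B)$ is abelian of exponent dividing $n$ follows from Proposition \ref{charactkummergalois} applied to a generating set of $B$, while $\Psi(L)$ is visibly a subgroup of $K^*/(K^*)^n$ (its finiteness will come out of the order formula below). Proposition \ref{charactkummergalois} also shows that every extension in the target class is of the form $K(\sqrt[n]{a_1},\dots,\sqrt[n]{a_k})$, so that $\Theta$ is surjective; it then remains to check $\Theta\circ\Psi=\mathrm{id}$ and $\Psi\circ\Theta=\mathrm{id}$.

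The equality $\Theta\circ\Psi=\mathrm{id}$ is the easy half: the inclusion $\Theta(\Psi(L))\subseteq L$ is immediate, and writing $L=K(\sqrt[n]{a_1},\dots,\sqrt[n]{a_k})$ via Proposition \ref{charactkummergalois}, each $\overline{a_i}$ lies in $\Psi(L)$, which gives $L\subseteq\Theta(\Psi(L))$. The whole content is therefore in $\Psi\circ\Theta=\mathrm{id}$, that is, in the claim that for $a\in K^*$ one has $\sqrt[n]{a}\in K(\sqrt[n]{B})$ if and only if $\overline{a}\in B$; here only the forward implication requires proof.

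I would prove it through the Kummer pairing. Writing $L_B=\Theta(B)$, $G=\mathrm{Gal}(L_B/K)$ and $\mu_n=\langle\zeta_n\rangle\subseteq K^*$, define $\langle\sigma,\overline{a}\rangle=\sigma(\alpha)/\alpha$ for $\alpha^n=a$. A routine verification shows this is well defined (independent of the chosen root $\alpha$, by $\zeta_n\in K$, and of the representative $a$ modulo $(K^*)^n$), lands in $\mu_n$ because $(\sigma(\alpha)/\alpha)^n=\sigma(a)/a=1$, and is bilinear. Its non-degeneracy is essentially formal: if $\sigma$ pairs trivially with all of $B$ then it fixes every radical generator of $L_B$, forcing $\sigma=1$; and if $\overline{a}$ pairs trivially with all of $G$ then $\alpha\in L_B^{G}=K$, forcing $\overline{a}=1$. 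Hence $G$ embeds into $\mathrm{Hom}(B,\mu_n)$ and $B$ into $\mathrm{Hom}(G,\mu_n)$; as both groups are finite abelian of exponent dividing $n$ and $\mu_n$ is cyclic of order $n$, one has $|\mathrm{Hom}(A,\mu_n)|=|A|$ for either, and comparing orders yields the key formula $[L_B:K]=|G|=|B|$. The forward implication now follows by closing up: if $\sqrt[n]{a}\in L_B$ then $B'=\langle B,\overline{a}\rangle$ satisfies $L_{B'}=L_B(\sqrt[n]{a})=L_B$, so $|B'|=[L_{B'}:K]=[L_B:K]=|B|$, and $B\subseteq B'$ forces $\overline{a}\in B$.

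The cyclic refinement falls out of the same pairing: since $\mathrm{Hom}(B,\mu_n)\cong B$ for finite abelian $B$ of exponent dividing $n$, the embeddings above give $G\cong B$, so $B$ is cyclic precisely when $G$ is, i.e. precisely when $L_B/K$ is cyclic, with order and degree matched by $|B|=[L_B:K]$. I expect the order formula $[L_B:K]=|B|$ --- equivalently, the non-degeneracy of the Kummer pairing --- to be the sole genuine obstacle, everything else being bookkeeping. A more elementary route, nearer to the discussion preceding the corollary, would bypass the pairing by decomposing $B$ into cyclic factors, realising $L_B$ as a compositum of linearly disjoint cyclic extensions, and appealing to the cited one-generator criterion $K(\sqrt[n]{a})=K(\sqrt[n]{b})\iff a=b^{r}c^{n}$; but controlling linear disjointness across the generators inductively is precisely where the pairing argument is cleaner.
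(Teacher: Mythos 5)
Your proof is correct, but it takes a genuinely different route from the paper's. The paper treats the corollary as a repackaging of Proposition \ref{charactkummergalois}: the discussion preceding the statement sets up the two assignments $L\mapsto\langle a_1,\dots,a_k\rangle$ and $B\mapsto K(\sqrt[n]{B})$, and delegates injectivity to the cited criterion that $K(\sqrt[n]{a})=K(\sqrt[n]{b})$ if and only if $a=b^rc^n$ with $r$ coprime to $n$, applied across the decomposition of a Kummer extension into linearly disjoint cyclic pieces. You instead prove the key implication $\sqrt[n]{a}\in K(\sqrt[n]{B})\Rightarrow\overline{a}\in B$ by introducing the Kummer pairing $G\times B\to\mu_n$, deriving the order formula $[K(\sqrt[n]{B}):K]=|B|$ from its non-degeneracy, and closing up; this is the standard textbook argument. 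It is self-contained (no appeal to the external one-generator lemma), it yields the stronger structural fact $G\cong B$, and it makes the cyclic refinement immediate --- indeed slightly sharper than stated, since it matches the order of $B$ with the degree of the extension. What it costs is the extra machinery of the pairing; the paper's version is shorter given the citation and stays closer to the explicit radical generators that the rest of the section manipulates. Your closing remarks correctly identify exactly where the elementary route (the one the paper takes) has to work harder, namely in controlling linear disjointness across the cyclic factors.
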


This result is sometimes known as the main theorem of Kummer theory.

\subsection{A characterization of Kummer Galois extensions}

In this section we will rewrite the Kummer condition for a Galois extension $L/K$ in terms of the action of its Galois group. We will start with the cyclic case. We know from Proposition \ref{pro:firstcharactcyclic} that such an extension is of the form $L=K(\alpha)$ with $\alpha^n\in K$ whenever $\zeta_n\in K$, since it has cyclic Galois group $G$. Now, note that for every $\sigma\in G$ there is a unique $0\leq i_{\sigma}\leq n-1$ such that $\sigma(\alpha)=\zeta_n^{i_{\sigma}}\alpha$. This means that the element $\alpha$ is an eigenvector of all the elements $\sigma\in G$, where these are regarded as $K$-endomorphisms of $L$. This property is also a characterization of cyclic extensions of $K$.

\begin{pro}\label{charactkummercyclic} Let $n\in\mathbb{Z}_{>0}$ and let $K$ be a field with characteristic coprime to $n$. Let $L=K(\alpha)$ be a Galois extension of $K$ with group $G$. The following statements are equivalent:
\begin{enumerate}
    \item\label{charactkummercyclic1} $\zeta_n\in K$, $\alpha^n\in K$ and no smaller power of $\alpha$ is in $K$.
    \item\label{charactkummercyclic2} $\alpha$ is an eigenvector of each automorphism $\sigma\in G$ and $|G|=n$.
\end{enumerate}
\end{pro}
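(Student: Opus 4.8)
The plan is to prove the two implications separately, with essentially all the content concentrated in $\ref{charactkummercyclic2}\Rightarrow\ref{charactkummercyclic1}$. For $\ref{charactkummercyclic1}\Rightarrow\ref{charactkummercyclic2}$, I would just record the computation preceding the statement. Assuming $\zeta_n\in K$ and $\alpha^n=a\in K$ with no smaller power of $\alpha$ in $K$, Proposition \ref{pro:firstcharactcyclic} already gives that $L/K$ is cyclic of degree $n$, so $|G|=n$. For each $\sigma\in G$, applying $\sigma$ to $\alpha^n=a$ yields $\sigma(\alpha)^n=a=\alpha^n$, so $\sigma(\alpha)/\alpha$ is an $n$-th root of unity; since $\zeta_n\in K$, this root lies in $K$, and hence $\sigma(\alpha)=\lambda_\sigma\alpha$ with $\lambda_\sigma\in K$. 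Thus $\alpha$ is an eigenvector of every $\sigma\in G$.

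The substantial direction is $\ref{charactkummercyclic2}\Rightarrow\ref{charactkummercyclic1}$. Write $\sigma(\alpha)=\lambda_\sigma\alpha$ with $\lambda_\sigma\in K$ for each $\sigma\in G$ (note $\alpha\neq0$ since it generates $L$). First I would check that $\sigma\mapsto\lambda_\sigma$ defines a group homomorphism $\chi\colon G\longrightarrow K^*$: from $(\sigma\tau)(\alpha)=\sigma(\lambda_\tau\alpha)=\lambda_\tau\,\sigma(\alpha)=\lambda_\sigma\lambda_\tau\alpha$ one reads off $\lambda_{\sigma\tau}=\lambda_\sigma\lambda_\tau$, using that $\lambda_\tau\in K$ is fixed by $\sigma$. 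This $\chi$ is injective, because $\lambda_\sigma=1$ means $\sigma$ fixes $\alpha$, hence fixes $L=K(\alpha)$, so $\sigma=\mathrm{id}$. Therefore $G$ embeds as a finite subgroup of $K^*$; since every finite subgroup of the multiplicative group of a field is cyclic, $G$ is cyclic of order $n$ and its image $\chi(G)$ is a cyclic subgroup of $K^*$ of order $n$.

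The key consequence is that a generator of $\chi(G)$ is an element of order $n$ in $K^*$, i.e.\ a primitive $n$-th root of unity, so $\zeta_n\in K$. Fixing a generator $\sigma_0$ of $G$, I set $\zeta:=\lambda_{\sigma_0}$, which is then a primitive $n$-th root of unity. It remains to deduce that $\alpha^n\in K$ and that $n$ is minimal. For the former, every $\lambda_\sigma$ lies in $\chi(G)$ and hence satisfies $\lambda_\sigma^n=1$, so $\sigma(\alpha^n)=\lambda_\sigma^n\alpha^n=\alpha^n$ for all $\sigma\in G$, whence $\alpha^n\in L^G=K$. For minimality, if $\alpha^k\in K$ for some $1\le k<n$, then $\alpha^k$ is fixed by $\sigma_0$, giving $\zeta^k\alpha^k=\alpha^k$ and therefore $\zeta^k=1$; since $\zeta$ is primitive this forces $n\mid k$, a contradiction.

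The main obstacle is the observation that $\sigma\mapsto\lambda_\sigma$ is an injective homomorphism into $K^*$: this single step simultaneously forces $G$ to be cyclic and produces a primitive $n$-th root of unity inside $K$, after which every remaining assertion reduces to a short computation exploiting that each eigenvalue is an $n$-th root of unity.
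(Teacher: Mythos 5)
Your proposal is correct, and both implications are sound; the direction \eqref{charactkummercyclic1}$\Rightarrow$\eqref{charactkummercyclic2} matches what the paper does (it simply points back to the discussion surrounding Proposition \ref{pro:firstcharactcyclic}). For the substantial direction \eqref{charactkummercyclic2}$\Rightarrow$\eqref{charactkummercyclic1}, however, you take a genuinely different route. The paper first proves $\alpha^n\in K$ by a norm computation: since $\alpha$ is primitive, $N(\alpha)=\prod_{\sigma\in G}\sigma(\alpha)=\bigl(\prod_{\sigma}\lambda_\sigma\bigr)\alpha^n\in K$ forces $\alpha^n\in K$; it then identifies the minimal polynomial as $x^n-a$, so the conjugates are $\zeta_n^{i}\alpha$ and each eigenvalue $\lambda_\sigma=\zeta_n^{i_\sigma}$ lies in $K$, which yields $\zeta_n\in K$ last. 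You instead observe at the outset that $\sigma\mapsto\lambda_\sigma$ is an injective character $G\hookrightarrow K^*$, so that $G$ is cyclic and $K^*$ contains an element of order $n$, giving $\zeta_n\in K$ first; the facts $\alpha^n\in K$ and the minimality of $n$ then drop out from $\lambda_\sigma^n=1$ and from evaluating a generator on $\alpha^k$. Your argument is the standard Kummer-theoretic character argument: it is slightly more structural, avoids the norm computation, and shows as a byproduct that $G$ must be cyclic (which the paper's proof never needs to make explicit), whereas the paper's proof is more computational and reads the root of unity directly off the factorization of the minimal polynomial. Both are complete; the only point worth double-checking in yours is the claim that a generator of the image has order exactly $n$, which follows from injectivity of the character together with $|G|=n$, as you say.
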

\begin{proof}
We have already seen that \eqref{charactkummercyclic1} implies \eqref{charactkummercyclic2}. Conversely, let us assume the situation in \eqref{charactkummercyclic2}, so that $|G|=n$ and for each $\sigma\in G$ there is an element $\lambda_{\sigma}\in K$ such that $\sigma(\alpha)=\lambda_{\sigma}\alpha$. Since $\alpha$ is a primitive element, all the elements $\sigma(\alpha)$ are distinct as $\sigma$ runs through $G$, so the norm of $\alpha$ is $N(\alpha)=\prod_{\sigma\in G}\sigma(\alpha)=\prod_{\sigma\in G}\lambda_{\sigma}\alpha^n$, which obviously belongs to $K$. Since the $\lambda_{\sigma}$ also do, we obtain that $\alpha^n\in K$. In other words, $L=K(\alpha)$ where $\alpha^n\in K$. Moreover, the condition that $|G|=n$ ensures that the minimal polynomial of $\alpha$ has degree $n$, so no smaller power of $\alpha$ belongs to $K$. Let us check that $\zeta_n\in K$. Indeed, the minimal polynomial of $\alpha$ over $K$ is $f(x)=x^n-a$ with $a\coloneqq\alpha^n$, and hence the conjugates of $\alpha$ are $\alpha,\,\zeta_n\alpha,\dots,\,\zeta_n^{n-1}\alpha$. Thus, there is a unique $\sigma_1\in G$ such that $\sigma_1(\alpha)=\zeta_n\alpha$, so $\zeta_n=\lambda_{\sigma_1}\in K$. 
\end{proof}

We will refer to a primitive element $\alpha$ as in Proposition \ref{charactkummercyclic} as a Galois eigenvector or $G$-eigenvector. From the proof of this result, we see that the eigenvalues are $n$-th roots of unity.

Now, we use the same idea to characterize arbitrary Kummer extensions in terms of the Galois action. We have seen that a Kummer extension is a product of cyclic extensions, each of which has a Galois eigenvector as a primitive element. Accordingly, we will prove that Kummer extensions are those with a finite generating set of Galois eigenvectors.

\begin{lema}\label{prodgaloiseig} Let $L_1/K$ and $L_2/K$ be Galois extensions with Galois groups $G_1$ and $G_2$. Let $L=L_1L_2$ and let $G=\mathrm{Gal}(L/K)$. If $\alpha_i\in L_i$ is a $G_i$-eigenvector for $i\in\{1,2\}$, then $\alpha_1\alpha_2$ is a $G$-eigenvector in $L$. Consequently, the union of generating sets of $G_k$-eigenvectors for $L_k/K$ is a generating set of $G$-eigenvectors for $L/K$.
\end{lema}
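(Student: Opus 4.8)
The plan is to deduce everything from the product structure of the Galois action recorded in Proposition \ref{pro:prodgalois}. Since $L_1/K$ and $L_2/K$ are Galois, so is $L=L_1L_2$, and that proposition gives an embedding $G\hookrightarrow G_1\times G_2$ under which each $\sigma\in G$ factors uniquely as $\sigma=\sigma_1\sigma_2$ with $\sigma_i=\sigma|_{L_i}\in G_i$, the action on $L$ being the product of the restricted actions: $(\sigma_1\sigma_2)(\beta_1\beta_2)=\sigma_1(\beta_1)\sigma_2(\beta_2)$ for $\beta_i\in L_i$.

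First I would prove the statement for a single product. Fix $\sigma\in G$ and write $\sigma=\sigma_1\sigma_2$ as above. Since $\alpha_i\in L_i$, the product action yields $\sigma(\alpha_1\alpha_2)=\sigma_1(\alpha_1)\sigma_2(\alpha_2)$. Because $\alpha_i$ is a $G_i$-eigenvector there are scalars $\lambda_{\sigma_1},\lambda_{\sigma_2}\in K$ with $\sigma_i(\alpha_i)=\lambda_{\sigma_i}\alpha_i$, and substituting gives $\sigma(\alpha_1\alpha_2)=(\lambda_{\sigma_1}\lambda_{\sigma_2})\alpha_1\alpha_2$. As $\lambda_{\sigma_1}\lambda_{\sigma_2}\in K$ and $\sigma$ was arbitrary, $\alpha_1\alpha_2$ is a $G$-eigenvector; by the remark after Proposition \ref{charactkummercyclic} both scalars are roots of unity, so the eigenvalue of $\alpha_1\alpha_2$ is too. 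This step is routine once the product action is available.

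For the consequence, let $S_i$ be a generating set of $G_i$-eigenvectors for $L_i/K$. By the single-product claim every $s_1s_2$ with $s_i\in S_i$ is a $G$-eigenvector; in particular, taking the trivial factor $1$ (a $G_i$-eigenvector with eigenvalue $1$) shows that each element of $S_1$ and of $S_2$ is itself a $G$-eigenvector. The resulting family generates $L$ because $K(S_1)K(S_2)=L_1L_2=L$.

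I expect the one genuinely delicate point to be the generation assertion. The naive set of pairwise products $\{s_1s_2\}$ need not generate $L$ — a single product such as $\sqrt[n]{a_1}\sqrt[n]{a_2}$ cuts out only a proper subfield — so I would be careful to retain the original generators, either by reading ``product of generating sets'' as the combined family that still contains $S_1$ and $S_2$ (via the factor $1$), or, when $S_i$ is the full eigenvector set $\sqrt[n]{B_i}$, by using that these sets are multiplicatively closed, so that their product equals $\sqrt[n]{B_1B_2}$, which generates $L$. In both readings the eigenvector property comes from the first part and the generation from the compositum identity $K(S_1)K(S_2)=L$.
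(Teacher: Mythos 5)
Your argument is correct and essentially identical to the paper's: both factor $\sigma=\sigma_1\sigma_2$ via Proposition \ref{pro:prodgalois} and multiply the two eigenvalue relations $\sigma_i(\alpha_i)=\lambda_{\sigma_i}\alpha_i$, the paper dispatching the final sentence with ``an easy induction.'' Your extra caution about reading ``product of generating sets'' so as to retain $S_1$ and $S_2$ (via the trivial factor $1$) is well placed, since the bare set of pairwise products $\{s_1s_2\}$ need not generate $L$, but it does not change the substance of the proof.
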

\begin{proof}
By assumption we have that for each $g_i\in G_i$ there is $\lambda_{g_i}\in K$ such that $g_i(\alpha_i)=\lambda_{g_i}\alpha_i$. From Proposition \ref{pro:prodgalois}, for each $g\in G$ there are unique elements $g_1\in G_1$ and $g_2\in G_2$ such that $g|_{L_i}=g_i$, $i\in\{1,2\}$. Moreover, $G$ acts on $L$ by means of the action of $G_i$ on $L_i$, $i\in\{1,2\}$. 
Hence, $$g(\alpha_1\alpha_2)=g_1(\alpha_1)g_2(\alpha_2)=\lambda_{g_1}\lambda_{g_2}\alpha_1\alpha_2$$ with $\lambda_{g_1}\lambda_{g_2}\in K$. The last sentence follows directly from the fact that any $G_k$-eigenvector for some $k\in\{1,2\}$ is also a $G$-eigenvector, since $1$ is always a Galois eigenvector.
\end{proof}

\begin{teo}\label{charactkummereig} Let $n\in\mathbb{Z}_{>0}$ and let $K$ be a field with characteristic coprime to $n$. Let $L=K(\alpha_1,\dots,\alpha_k)$ be a finite Galois extension of $K$ with group $G$. The following statements are equivalent:
\begin{enumerate}
    \item\label{charactkummereig1} $\zeta_n\in K$, $\alpha_i^n\in K$ for all $1\leq i\leq k$ and $n$ is minimal for this property.
    \item\label{charactkummereig2} $\{\alpha_1,\dots,\alpha_k\}$ is a generating set of $G$-eigenvectors for $L/K$ and $\mathrm{exp}(G)=n$.
\end{enumerate}
\end{teo}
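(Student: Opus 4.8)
The plan is to prove the two implications separately, treating \eqref{charactkummereig1} $\Rightarrow$ \eqref{charactkummereig2} as the routine direction and concentrating the real work on the converse. For the forward direction, I would assume $\zeta_n\in K$, each $\alpha_i^n\in K$ and $n$ minimal. Since $\zeta_n\in K$, Proposition \ref{charactkummergalois} applies directly and yields that $L/K$ is Kummer with exponent $n$, so in particular $\mathrm{exp}(G)=n$. To see that each $\alpha_i$ is a $G$-eigenvector, I would note that the minimal polynomial of $\alpha_i$ over $K$ divides $x^n-\alpha_i^n$, whose roots are $\zeta_n^j\alpha_i$ with $0\leq j\leq n-1$; hence every $\sigma\in G$ sends $\alpha_i$ to some $\zeta_n^{j}\alpha_i$, and because $\zeta_n\in K$ the scalar $\zeta_n^j$ lies in $K$. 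Thus $\{\alpha_1,\dots,\alpha_k\}$ is a generating set of $G$-eigenvectors, giving \eqref{charactkummereig2}.

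For the converse I would pass to the individual subextensions $L_i=K(\alpha_i)$. First I would observe that each $L_i/K$ is Galois: since $\alpha_i$ is a $G$-eigenvector, $\sigma(\alpha_i)=\lambda_\sigma\alpha_i\in L_i$ for every $\sigma\in G$, so $\sigma(L_i)=L_i$, meaning $L_i$ is stable under $G$. Writing $G_i=\mathrm{Gal}(L_i/K)$ and $n_i=|G_i|=[L_i:K]$, the element $\alpha_i$ remains an eigenvector for each element of $G_i$, so Proposition \ref{charactkummercyclic} gives that $L_i/K$ is cyclic of degree $n_i$ with $\zeta_{n_i}\in K$, $\alpha_i^{n_i}\in K$ and no smaller power of $\alpha_i$ in $K$.

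The crux is then to identify $n$. Since $L=L_1\cdots L_k$ is a compositum of Galois extensions, iterating Proposition \ref{pro:prodgalois} embeds $G$ into $\prod_{i=1}^k G_i$; as each $G_i$ is cyclic of order $n_i$, this forces $\mathrm{exp}(G)\mid\mathrm{lcm}(n_1,\dots,n_k)$. Conversely each restriction $G\to G_i$ is surjective, so $n_i=\mathrm{exp}(G_i)$ divides $\mathrm{exp}(G)$, giving $\mathrm{lcm}(n_i)\mid\mathrm{exp}(G)$ and hence $n=\mathrm{exp}(G)=\mathrm{lcm}(n_1,\dots,n_k)$. From $n_i\mid n$ I get $\alpha_i^n=(\alpha_i^{n_i})^{n/n_i}\in K$ for every $i$, and the fact that $n_i$ is the least positive exponent with $\alpha_i^{n_i}\in K$ shows $n_i\mid m$ for any $m$ with $\alpha_i^m\in K$ (the set of such exponents is a subgroup of $\mathbb{Z}$), whence $n=\mathrm{lcm}(n_i)$ is minimal with $\alpha_i^n\in K$ for all $i$.

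The main obstacle I anticipate is deducing $\zeta_n\in K$ from the individual facts $\zeta_{n_i}\in K$. Here I would use that the roots of unity contained in $K^*$ form a cyclic group: since $K$ contains a primitive $n_i$-th root of unity for each $i$, it contains the full group $\mu_{n_i}=\langle\zeta_{n_i}\rangle$, and the subgroup generated by these, $\prod_i\mu_{n_i}=\mu_{\mathrm{lcm}(n_i)}=\mu_n$, is again contained in $K$, so $\zeta_n\in K$. With this in hand all three clauses of \eqref{charactkummereig1} are established. Lemma \ref{prodgaloiseig} could alternatively be used to organize the eigenvector bookkeeping, but it is not strictly needed for the argument above.
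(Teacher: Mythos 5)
Your proof is correct and its core coincides with the paper's: the converse direction is handled in both cases by restricting to the subextensions $L_i=K(\alpha_i)$, invoking the cyclic case (Proposition \ref{charactkummercyclic}) on each, and then reassembling via Proposition \ref{pro:prodgalois}. You diverge in two minor but sound ways. First, in the forward direction you argue directly that every conjugate of $\alpha_i$ is $\zeta_n^j\alpha_i$ with $\zeta_n^j\in K$, rather than following the paper's route through Proposition \ref{charactkummergalois}'s decomposition into cyclic factors and Lemma \ref{prodgaloiseig}; your version is shorter and avoids any minimality assumption on the generating set. Second, for the step $\zeta_n\in K$ you use that $K^*$ contains the full groups $\mu_{n_i}$ and hence $\mu_{\mathrm{lcm}(n_1,\dots,n_k)}=\mu_n$, whereas the paper takes the explicit product $\zeta_{n_1}\cdots\zeta_{n_k}$ and asserts it equals $\zeta_n$; your argument is actually the more robust one, since that explicit product need not be a primitive $n$-th root of unity (e.g. $\zeta_2\cdot\zeta_2=1$), so the subgroup-generation argument is the right way to close this step. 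Your bookkeeping identifying $n=\exp(G)=\mathrm{lcm}(n_1,\dots,n_k)$ via surjectivity of the restrictions $G\to G_i$ and the embedding $G\hookrightarrow\prod_iG_i$, and the observation that the exponents $m$ with $\alpha_i^m\in K$ form the subgroup $n_i\mathbb{Z}$, are also correct and slightly more explicit than what the paper writes.
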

\begin{proof}
Call $L_i\coloneqq K(\alpha_i)$ and $n_i\coloneqq[L_i:K]$ for each $1\leq i\leq k$.

Assume the situation in \eqref{charactkummereig1}. Then Proposition \ref{charactkummergalois} gives that $L/K$ is Kummer with exponent $n$, and $\{\alpha_1,\dots,\alpha_k\}$ is a set of Kummer generators. Moreover, applying Proposition \ref{charactkummercyclic} at each $L_i/K$, $\alpha_i$ is a $G_i$-eigenvector for each $1\leq i\leq k$. 
By Lemma \ref{prodgaloiseig}, $\{\alpha_1,\dots,\alpha_k\}$ is a generating set of $G$-eigenvectors for $L/K$.

Now, suppose that $L/K$ satisfies \eqref{charactkummereig2}. Given $1\leq i\leq k$, for each $\sigma\in G$ there is a unique $\lambda_{\sigma,i}\in K$ such that $\sigma(\alpha_i)=\lambda_{\sigma,i}\alpha_i$. Then each $L_i/K$ is a Galois extension whose Galois group acts on $L_i$ by restriction of the action of $G$ on $L$. Therefore, given $1\leq i\leq k$, we have that $\sigma_i(\alpha_i)=\lambda_{\sigma_i,i}\alpha_i$ for every $\sigma_i\in G_i\coloneqq\mathrm{Gal}(L_i/K)$. Applying Proposition \ref{charactkummercyclic} at each $L_i/K$, we have that $\zeta_{n_i}\in K$, $\alpha_i^{n_i}\in K$ and no smaller power of $\alpha_i$ belongs to $K$, that is, $L_i/K$ is cyclic of degree $n_i$. By Proposition \ref{pro:prodgalois}, $G$ is embedded in the direct product of the groups $G_i$. But $G$ has exponent $n$ by hypothesis, so $n$ is the least common multiple of the numbers $n_i=|G_i|$. Then it follows that $\alpha_1,\dots,\alpha_k$ are as in \eqref{charactkummereig1}. Let us prove that $\zeta_n\in K$. Let $F$ be the prime field of $K$ (so that $F=\mathbb{Q}$ if $\mathrm{char}(K)=0$ and $F=\mathbb{F}_p$ if $\mathrm{char}(K)=p>0$). Since $\zeta_{n_i}\in K$ for all $1\leq i\leq k$, the compositum of the fields $F(\zeta_{n_i})$ is a subfield of $K$. But this compositum is just $F(\zeta_n)$ because $n$ is the least common multiple of the numbers $n_i$. We conclude that $\zeta_n\in K$ as we wanted to prove. 
\end{proof}

\begin{rmk}\normalfont If $L/K$ is a Galois extension with some $K$-basis of $G$-eigenvectors $\{\alpha_1,\dots,\alpha_k\}$, then the eigenvalues of the elements $\alpha_i$ under the action of $G$ are necessarily $n$-th roots of unity. Indeed, we have seen in the proof of Theorem \ref{charactkummereig} that $G\hookrightarrow G_1\times\dots\times G_k$ for $G_i=\mathrm{Gal}(K(\alpha_i)/K)$, and each $\alpha_i$ is a $G_i$-eigenvector with an $n$-th root of unity as eigenvalue. From the proof of Lemma \ref{prodgaloiseig}, we can see that the eigenvalue of $\alpha_i$ as a $G$-eigenvector is the same as the eigenvalue for $\alpha_i$ as a $G_i$-eigenvector.
\end{rmk}

\section{A Kummer condition for Hopf-Galois extensions}\label{sect:kummerhopfgalois}

Let $K$ be a field with characteristic coprime to $n\in\mathbb{Z}_{>0}$. In Section \ref{kummergalois} we have introduced the notion of Galois eigenvector and we have showed that it can be used to characterize the Kummer condition for Galois extensions. This notion only depends on the Galois group and its Galois action, so it makes sense to define an analogous concept for any Hopf-Galois structure on a given Hopf-Galois extension. 

Let $L/K$ be an $H$-Galois extension of fields and let $\cdot$ be the action of $H$ on $L$. Then the map $$\rho_H\colon H\longrightarrow\mathrm{End}_K(L)$$ defined as $\rho_H(h)(x)=h\cdot x$ is a $K$-linear monomorphism. Indeed, we have that $\rho_H=j\circ \iota$, where $\iota\colon H\longrightarrow L\otimes_KH$ is the canonical inclusion defined by $\iota(h)=1\otimes h$ and $j\colon L\otimes_KH\longrightarrow\mathrm{End}_K(L)$ is the map from Section \ref{secthgtheory}, which is a $K$-linear isomorphism by definition of Hopf-Galois structure.

\begin{defi}\label{defieig} Let $L/K$ be an $H$-Galois extension of fields. We say that an element $\alpha\in L$ is an eigenvector of the action of $H$, or an \textbf{$H$-eigenvector}, if for every $h\in H$ there exists some $\lambda(h)\in K$ such that $$h\cdot\alpha=\lambda(h)\alpha,$$ or equivalently, $\alpha$ is an eigenvector of the $K$-endomorphism $\rho_H(h)$ for every $h\in H$. The element $\lambda(h)$ is called an \textbf{$H$-eigenvalue}.
\end{defi}

When we consider the action of $H$ on different eigenvectors $\alpha$, we will sometimes make explicit mention to the corresponding eigenvalues by denoting $\lambda_{\alpha}(h)\equiv\lambda(h)$.

If $L/K$ is Galois with group $G$ and we write $H_c$ for its classical Galois structure, the notion of $G$-eigenvector in Section \ref{kummergalois} is just the one of $H_c$-eigenvector according to Definition \ref{defieig}.

There are some immediate remarks from Definition \ref{defieig}. The element $\alpha=0$ is always an $H$-eigenvector in $L$, since $h\cdot0=0$ for every $h\in H$. Another trivial example of eigenvector is $\alpha=1$. In this case, we have that $h\cdot1=\epsilon_H(h)$, where $\epsilon_H$ is the counity of $H$ as a $K$-Hopf algebra. Note that if $\alpha\neq0$, the element $\lambda(h)$ is completely determined by $h$. In the sequel we will always assume that $H$-eigenvectors are not zero.

In order to check that an element $\alpha$ is an $H$-eigenvector, it is enough to consider a $K$-basis of $H$. Indeed, if $W=\{w_i\}_{i=1}^n$ is a $K$-basis of $H$, for $h=\sum_{i=1}^nh_iw_i\in H$ we have that $$h\cdot\alpha=\sum_{i=1}^nh_iw_i\cdot\alpha=\Big(\sum_{i=1}^nh_i\lambda(w_i)\Big)\alpha=\lambda(h)\alpha,$$ where $\lambda(h)=\sum_{i=1}^nh_i\lambda(w_i)\in K$.

Under this terminology, Theorem \ref{charactkummereig} states that when $\zeta_n\in K$, a Galois extension $L/K$ is Kummer if and only if it admits some finite generating set of eigenvectors under the action of its Galois group, and therefore under the action of the classical Galois structure on $L/K$. This motivates the following definition.

\begin{defi}\label{defihkummer} Let $L/K$ be an $H$-Galois extension of fields. We say that $L/K$ is \textbf{$H$-Kummer} if it admits some finite generating set of $H$-eigenvectors.
\end{defi}

With this definition, any Kummer extension in the classical sense is Kummer with respect to its classical Galois structure.

Sometimes it will be more convenient to work with $K$-bases of $L$ rather than generating sets for $L/K$. Actually, the existence of such a basis is equivalent to the $H$-Kummer property, due to the following result:

\begin{pro}\label{pro:prodeig} Let $L/K$ be an $H$-Galois extension with some $H$-eigenvector. Then the product of $H$-eigenvectors is also an $H$-eigenvector.
\end{pro}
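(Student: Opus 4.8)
The plan is to use the only structural link between the action of $H$ and the multiplication of $L$, namely the $H$-module algebra axiom $h\cdot(xx')=\sum_{(h)}(h_{(1)}\cdot x)(h_{(2)}\cdot x')$ recorded in Section \ref{secthgtheory}. It suffices to treat the product of two $H$-eigenvectors, since the case of a product of finitely many factors then follows by an immediate induction on the number of factors. So let $\alpha,\beta\in L$ be $H$-eigenvectors. As agreed after Definition \ref{defieig} we take them nonzero, so for each $h\in H$ there are uniquely determined scalars $\lambda_\alpha(h),\lambda_\beta(h)\in K$ with $h\cdot\alpha=\lambda_\alpha(h)\alpha$ and $h\cdot\beta=\lambda_\beta(h)\beta$.

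First I would fix $h\in H$ and write its comultiplication as $\Delta_H(h)=\sum_{(h)}h_{(1)}\otimes h_{(2)}$. Applying the module algebra axiom with $x=\alpha$ and $x'=\beta$, and then substituting the eigenvector relations for each $h_{(1)}$ and $h_{(2)}$, I get
\begin{equation*}
h\cdot(\alpha\beta)=\sum_{(h)}(h_{(1)}\cdot\alpha)(h_{(2)}\cdot\beta)=\sum_{(h)}\lambda_\alpha(h_{(1)})\lambda_\beta(h_{(2)})\,\alpha\beta=\Big(\sum_{(h)}\lambda_\alpha(h_{(1)})\lambda_\beta(h_{(2)})\Big)\alpha\beta.
\end{equation*}
Setting $\mu(h)\coloneqq\sum_{(h)}\lambda_\alpha(h_{(1)})\lambda_\beta(h_{(2)})$, this exhibits $\alpha\beta$ as an eigenvector of $\rho_H(h)$ with eigenvalue $\mu(h)$.

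The only point requiring care, and hence the closest thing to an obstacle here, is to confirm that $\mu(h)$ genuinely lies in $K$ and that $\alpha\beta$ is a legitimate (nonzero) eigenvector. The latter is clear because $L$ is a field and $\alpha,\beta\neq0$, so $\alpha\beta\neq0$; this also guarantees that the scalar $\mu(h)$ produced above is unambiguous. For the former, I would note that each $h_{(1)}$ and $h_{(2)}$ is an element of $H$, so $\lambda_\alpha(h_{(1)})$ and $\lambda_\beta(h_{(2)})$ already lie in $K$ by the definition of $H$-eigenvector; hence every summand $\lambda_\alpha(h_{(1)})\lambda_\beta(h_{(2)})$ is a product of two elements of $K$, and the finite sum $\mu(h)$ is therefore in $K$ as required. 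Equivalently, $\mu(h)=(\lambda_\alpha\otimes\lambda_\beta)(\Delta_H(h))$ is the image of $\Delta_H(h)\in H\otimes_KH$ under the $K$-linear map $\lambda_\alpha\otimes\lambda_\beta\colon H\otimes_KH\to K\otimes_KK=K$. Since $h$ was arbitrary, $\alpha\beta$ is an $H$-eigenvector, and the induction on the number of factors completes the proof.
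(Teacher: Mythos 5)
Your proof is correct and follows essentially the same route as the paper: apply the $H$-module algebra axiom in Sweedler notation, substitute the eigenvalue relations for the components $h_{(1)},h_{(2)}$, and collect the resulting scalar in $K$. The extra care you take in justifying that $\mu(h)\in K$ via the well-defined map $\lambda_\alpha\otimes\lambda_\beta$ is a nice touch the paper leaves implicit, but it does not change the argument.
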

\begin{proof}
Let $\alpha_1,\alpha_2\in L$ be $H$-eigenvectors and let $h\in H$. Then there are unique elements $\lambda_{\alpha_1}(h),\lambda_{\alpha_2}(h)\in K$ such that $h\cdot\alpha_1=\lambda_{\alpha_1}(h)\alpha_1$ and $h\cdot\alpha_2=\lambda_{\alpha_2}(h)\alpha_2$. Using the Sweedler's notation for $h$, $$h\cdot(\alpha_1\alpha_2)=\sum_{(h)}(h_{(1)}\cdot\alpha_1)(h_{(2)}\cdot\alpha_2)=\sum_{(h)}\lambda_{\alpha_1}(h_{(1)})\lambda_{\alpha_2}(h_{(2)})\alpha_1\alpha_2=\lambda(h)\alpha_1\alpha_2,$$ where $\lambda(h)=\sum_{(h)}\lambda_{\alpha_1}(h_{(1)})\lambda_{\alpha_2}(h_{(2)})\in K$.
\end{proof}

\begin{rmk}\normalfont In general, the sum of $H$-eigenvectors is not an $H$-eigenvector. For example, let $L=\mathbb{Q}(\alpha)$ with $\alpha^3=2$, which admits a unique Hopf-Galois structure $H$ by Byott uniqueness theorem \cite{byottuniqueness}. It is described as follows: Let $G=\mathrm{Gal}(\widetilde{L}/\mathbb{Q})$, where $\widetilde{L}=\mathbb{Q}(\alpha,\zeta_3)$ is the normal closure of $L/\mathbb{Q}$. Let $\sigma\in G$ be defined by $\sigma(\alpha)=\zeta_3\alpha$ and $\sigma(\zeta_3)=\zeta_3$, and let $\tau\in G$ be the automorphism fixing $\alpha$ and taking $\zeta_3$ to its inverse, so that $G$ is generated by $\sigma$ and $\tau$. In the notation of Section \ref{secthgtheory}, let us identify $\sigma$ with $\lambda(\sigma)$. Then we can use the Greither-Pareigis theorem to show that $H=\mathbb{Q}[w]$ where $w=\sqrt{-3}(\sigma-\sigma^2)$ acts on $L$ by means of the Galois action of $\sigma$ on $L$ (this is a straightforward calculation, see for instance \cite[Section 1.2]{cresporiovela2015}). Now, $\alpha$ is an $H$-eigenvector since $w\cdot\alpha=-3\alpha$ and $w^2\cdot\alpha=9\alpha$, and by Proposition \ref{pro:prodeig}, so is $\alpha^2$. However, $w\cdot(\alpha+\alpha^2)=3(-\alpha+\alpha^2)$, and there is no $\mu\in\mathbb{Q}$ such that $w\cdot(\alpha+\alpha^2)=\mu(\alpha+\alpha^2)$.
\end{rmk}

\begin{coro}\label{coro:primitiveeigbasis} Let $L/K$ be an $H$-Galois extension. Assume that there is some primitive element $\alpha$ of $L/K$ which is an $H$-eigenvector. Then $L/K$ admits a $K$-basis of $H$-eigenvectors of $L$.
\end{coro}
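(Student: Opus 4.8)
The plan is to exhibit an explicit $K$-basis of $L$ whose members are all $H$-eigenvectors, built from the powers of the given primitive element $\alpha$. Since $L/K$ is an $H$-Galois extension of degree $n$ and $\alpha$ is a primitive element, the set $\{1,\alpha,\alpha^2,\dots,\alpha^{n-1}\}$ is a $K$-basis of $L$. Thus it suffices to show that each power $\alpha^k$ with $0\leq k\leq n-1$ is an $H$-eigenvector.

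To establish this, I would proceed by induction on $k$. For the base cases, recall from the remarks following Definition \ref{defieig} that $1$ is an $H$-eigenvector (as $h\cdot 1=\epsilon_H(h)\cdot 1$ for every $h\in H$), and $\alpha$ is an $H$-eigenvector by hypothesis. For the inductive step, suppose $\alpha^{k}$ is an $H$-eigenvector; since $\alpha$ is also an $H$-eigenvector, Proposition \ref{pro:prodeig} shows that the product $\alpha^{k+1}=\alpha^{k}\cdot\alpha$ is again an $H$-eigenvector. Hence every power $\alpha^k$ is an $H$-eigenvector, and $\{1,\alpha,\dots,\alpha^{n-1}\}$ is the desired $K$-basis of $H$-eigenvectors of $L$.

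There is no real obstacle here: the statement follows immediately once the existence of a single primitive $H$-eigenvector is combined with the multiplicativity of the eigenvector property furnished by Proposition \ref{pro:prodeig}. The only point worth remarking is that the powers of a primitive element form a basis precisely because $[L:K]=n$ equals the degree of the minimal polynomial of $\alpha$, so no verification beyond the inductive argument is needed.
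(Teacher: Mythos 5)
Your proof is correct and matches the paper's argument: both apply induction via Proposition \ref{pro:prodeig} to conclude that every power of $\alpha$ is an $H$-eigenvector, and then use primitivity to see that $\{1,\alpha,\dots,\alpha^{n-1}\}$ is the desired $K$-basis. Your write-up merely makes the base case and inductive step slightly more explicit than the paper does.
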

\begin{proof}
Let $n=[L:K]$. An easy induction on Proposition \ref{pro:prodeig} shows that if $\alpha\in L$ is an $H$-eigenvector, so is $\alpha^i$ for every positive integer $i$. Thus, when $\alpha$ is in addition a primitive element, we have that $\{1,\alpha,\dots,\alpha^{n-1}\}$ is a $K$-basis of $H$-eigenvectors for $L$.
\end{proof}

\begin{coro} Let $L/K$ be an $H$-Galois extension. Then $L/K$ is $H$-Kummer if and only if $L$ has some $K$-basis of $H$-eigenvectors.
\end{coro}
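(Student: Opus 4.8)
The plan is to prove the two implications separately, reducing the substantive content to Proposition \ref{pro:prodeig} (closure of $H$-eigenvectors under products). The reverse direction will be immediate: if $\{\gamma_1,\dots,\gamma_n\}$ is a $K$-basis of $L$ consisting of $H$-eigenvectors, then it spans $L$ over $K$, so in particular $L=K(\gamma_1,\dots,\gamma_n)$, and it is therefore a finite generating set of $H$-eigenvectors. Hence $L/K$ is $H$-Kummer by definition.

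For the forward direction I would start from a finite generating set of $H$-eigenvectors $\alpha_1,\dots,\alpha_k$ with $L=K(\alpha_1,\dots,\alpha_k)$. Since every Hopf-Galois extension is finite, we have $L=K[\alpha_1,\dots,\alpha_k]$, so $L$ is spanned as a $K$-vector space by the monomials $\alpha_1^{e_1}\cdots\alpha_k^{e_k}$ with $e_i\geq 0$, and finitely many of these suffice because $\dim_K L=n<\infty$. I would then apply Proposition \ref{pro:prodeig}: an easy induction on it shows that each power $\alpha_i^{e_i}$ is again an $H$-eigenvector, and a further application shows that each monomial $\alpha_1^{e_1}\cdots\alpha_k^{e_k}$, being a product of $H$-eigenvectors, is an $H$-eigenvector. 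Thus $L$ is spanned over $K$ by a finite set of $H$-eigenvectors.

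Finally, I would pass from a spanning set to a basis: from the finite spanning set of monomials one extracts a $K$-basis of $L$ by the standard linear algebra procedure, and since every element of the spanning set is an $H$-eigenvector, so is every element of the extracted basis. This produces the desired $K$-basis of $H$-eigenvectors. The argument is essentially the multi-generator analogue of Corollary \ref{coro:primitiveeigbasis}, and the only genuinely substantive ingredient is the multiplicative closure of $H$-eigenvectors supplied by Proposition \ref{pro:prodeig}; I do not expect any real obstacle beyond carefully tracking that closure property through the monomial spanning argument.
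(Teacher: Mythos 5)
Your proposal is correct and follows essentially the same route as the paper: the reverse implication is immediate from the definitions, and the forward implication uses Proposition \ref{pro:prodeig} to show that the monomials in the generators are $H$-eigenvectors spanning $L$ over $K$, from which a basis is extracted. If anything, your version is slightly more careful than the paper's, which speaks of "the elements $\alpha_i$ and their powers" as a spanning set where one really needs the products of such powers, exactly as you track.
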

\begin{proof}
If $L$ has a $K$-basis of $H$-eigenvectors, then this is a generating set of $H$-eigenvectors for $L/K$, so $L/K$ is $H$-Kummer. Conversely, suppose that $L/K$ is $H$-Kummer, and let $\{\alpha_1,\dots,\alpha_k\}$ be a generating set of $H$-eigenvectors for $L/K$. By Proposition \ref{pro:prodeig}, the powers of the elements $\alpha_i$ are also $H$-eigenvectors. Now, since $L$ is the compositum of the fields $K(\alpha_i)$, the elements $\alpha_i$ and their powers form a system of generators for $L$ as a $K$-vector space that are $H$-eigenvectors. Hence, $L$ contains some $K$-basis of $H$-eigenvectors.
\end{proof}

\section{The correspondence with radical extensions}\label{sect:corresprad}

In Section \ref{kummergalois} we established a correspondence between Kummer Galois extensions and radical extensions of a field $K$, and under this correspondence cyclic extensions correspond to simple radical ones. Moreover, we proved that this characterization can be rewritten in terms of the existence of a finite generating system of Galois eigenvectors. In this section we will generalize the results in Section \ref{kummergalois} to the Hopf-Galois setting by means of the notion of $H$-eigenvector introduced in Section \ref{sect:kummerhopfgalois}. As a consequence, we will establish a correspondence between $H$-Kummer extensions and radical extensions of a field $K$ with characteristic coprime to $n\in\mathbb{Z}_{>0}$. This correspondence will not include all $H$-Kummer extensions; in fact it will be defined for a subclass of almost classically Galois extensions of $K$.

\subsection{The case of simple radical extensions}

We will consider first simple radical extensions of a field $K$, i.e. those that are generated by a single $n$-th root of some element in $K$. In the case that $K$ contains the $n$-th roots of unity, this extension is cyclic and, by Proposition \ref{charactkummercyclic}, it corresponds to an extension generated by a single Galois eigenvector. Accordingly, we introduce the following notion.

\begin{defi} Let $L/K$ be an $H$-Galois extension. We say that $L/K$ is $H$-cyclic if it has some primitive element which in addition is an $H$-eigenvector.
\end{defi}

In this part we will prove Theorem \ref{maintheorem1} for $k=1$, which corresponds to simple radical extensions. Namely:

\begin{teo}\label{thm:charactpure} Let $n\in\mathbb{Z}_{>0}$, let $K$ be a field with characteristic coprime to $n$ and let $M=K(\zeta_n)$. 
Let $L=K(\alpha)$ be a finite extension of $K$. The following statements are equivalent:
\begin{enumerate}
    \item\label{thmpure1} $L\cap M=K$, $\alpha^n\in K$ and $n$ is minimal for this property.
    \item\label{thmpure2} $L/K$ is a degree $n$ almost cyclic extension with Galois complement $M$ and $\alpha$ is an $H$-eigenvector of $L$, where $H$ is the almost classically Galois structure on $L/K$ corresponding to $M$.
\end{enumerate}
In particular, the simple radical degree $n$ extensions of $K$ that are linearly disjoint with $M$ are the degree $n$ almost cyclic extensions of $K$ that are $H$-cyclic.
\end{teo}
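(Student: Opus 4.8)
The plan is to prove the equivalence \eqref{thmpure1}$\Leftrightarrow$\eqref{thmpure2} directly and then read off the closing ``in particular'' sentence as a reformulation. First I would treat \eqref{thmpure1}$\Rightarrow$\eqref{thmpure2}. Assuming $\alpha^n=a\in K$ with $n$ minimal, the polynomial $x^n-a$ is irreducible over $K$, so $[L:K]=n$; and the conjugates $\zeta_n^i\alpha$ show that the normal closure is $\widetilde L=K(\alpha,\zeta_n)=LM$. Since $M/K$ is Galois and $L\cap M=K$, Proposition \ref{pro:lindisjcharact} gives that $L$ and $M$ are linearly disjoint, and then Proposition \ref{pro:lindisjalmostclassic} shows that $L/K$ is almost classically Galois with Galois complement $M$. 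By linear disjointness $[\widetilde L:M]=[L:K]=n$, so $\widetilde L=M(\alpha)$ is the degree-$n$ extension cut out by the (now irreducible over $M$) polynomial $x^n-a$; its group $J=\mathrm{Gal}(\widetilde L/M)$ is cyclic, generated by $\sigma$ with $\sigma(\alpha)=\zeta_n\alpha$, so $L/K$ is almost cyclic. Finally, since $J$ is abelian, Proposition \ref{almostclassicstr} identifies the almost classically Galois structure corresponding to $M$ as $H=M[J]^{G'}$, where $G'=\mathrm{Gal}(\widetilde L/L)$.

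The heart of this direction is to check that $\alpha$ is an $H$-eigenvector. Using the Greither--Pareigis action formula together with $\sigma(\alpha)=\zeta_n\alpha$, the basis element $\sigma^i$ acts by $\sigma^i\cdot\alpha=\zeta_n^{i}\alpha$ (up to the inversion convention of the formula), so a general $h=\sum_i c_i\sigma^i\in M[J]^{G'}$ with $c_i\in M$ satisfies $h\cdot\alpha=\lambda(h)\alpha$ with $\lambda(h)=\sum_i c_i\zeta_n^{i}\in M$. The point is that $\lambda(h)\in K$: writing the conjugation action of $\tau\in G'$ on $J$ as $\tau\sigma\tau^{-1}=\sigma^{t}$ where $\tau(\zeta_n)=\zeta_n^{t}$, the $G'$-invariance of $h$ reads $\tau(c_i)=c_{ti}$, and a short reindexing gives $\tau(\lambda(h))=\lambda(h)$ for every $\tau\in G'$. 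Hence $\lambda(h)\in M\cap\widetilde L^{G'}=M\cap L=K$, where the hypothesis $L\cap M=K$ is exactly what is needed. Thus $\alpha$ is a primitive $H$-eigenvector, completing \eqref{thmpure1}$\Rightarrow$\eqref{thmpure2}.

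For \eqref{thmpure2}$\Rightarrow$\eqref{thmpure1}, the hypothesis that $L/K$ is almost cyclic of degree $n$ with Galois complement $M$ already yields $L\cap M=K$ and that $J=\mathrm{Gal}(\widetilde L/M)=\langle\sigma\rangle$ is cyclic of order $n$ with $\widetilde L=M(\alpha)$. I would exploit that the $H$-eigenvalue map $\lambda\colon H\to K$ is a $K$-algebra character (as $(hh')\cdot\alpha=h\cdot(h'\cdot\alpha)$ shows), and base-change it to $\Lambda=\mathrm{id}\otimes\lambda\colon\widetilde L\otimes_K H\to\widetilde L$. A direct computation gives $u\cdot(1\otimes\alpha)=\Lambda(u)(1\otimes\alpha)$ for every $u\in\widetilde L\otimes_K H$; and under the isomorphism $\widetilde L\otimes_K H\cong\widetilde L[N]\cong\widetilde L[J]$, $\Lambda$ is an $\widetilde L$-algebra character of the group algebra, hence a homomorphism $J\to\widetilde L^{*}$ sending the order-$n$ generator $\sigma$ to an $n$-th root of unity $\omega$. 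Comparing the two evaluations of $\sigma$ on $1\otimes\alpha$ yields $\sigma(\alpha)=\omega^{\pm1}\alpha$, and since $\alpha$ generates $\widetilde L/M$ and $\sigma$ has order $n$, the orbit $\{\sigma^i(\alpha)\}$ has size $n$, forcing $\omega$ to be primitive, i.e. $\sigma(\alpha)=\zeta_n^{\pm1}\alpha$. Then $\sigma(\alpha^n)=\alpha^n$, so $\alpha^n\in\widetilde L^{J}=M$; as $\alpha^n\in L$ as well, we get $\alpha^n\in L\cap M=K$, and primitivity of the eigenvalue forces $x^n-\alpha^n$ to be the minimal polynomial of $\alpha$, so $n$ is minimal. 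This is \eqref{thmpure1}.

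The closing statement is then immediate: by Proposition \ref{pro:lindisjcharact} a simple radical degree-$n$ extension $L=K(\sqrt[n]{a})$ is linearly disjoint with $M$ precisely when $L\cap M=K$, which is condition \eqref{thmpure1}; and condition \eqref{thmpure2}, applied to the primitive element $\alpha$, says exactly that $L/K$ is almost cyclic of degree $n$ with complement $M$ and $H$-cyclic. Hence the equivalence identifies the two families. I expect the main obstacle to be the reverse direction: recognising the base-changed character as a group homomorphism into the roots of unity, and pinning down the inversion and sign conventions of the Greither--Pareigis action so that its value $\omega$ on $\sigma$ translates correctly into $\sigma(\alpha)=\zeta_n^{\pm1}\alpha$. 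By contrast, the forward eigenvalue computation is routine once the invariance bookkeeping $\tau(c_i)=c_{ti}$ is set up correctly.
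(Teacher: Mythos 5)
Your argument is correct, and the forward direction is essentially the paper's Proposition \ref{pro:pureisalmostcyclic}: same reduction to $\widetilde{L}=LM$, linear disjointness, $H=M[J]^{G'}$ via Proposition \ref{almostclassicstr}, and an a priori eigenvalue $\lambda(h)\in M$ that must be trapped in $K$. The one place you work harder than necessary is precisely there: the paper gets $\lambda(h)\in L$ for free from the fact that $H$ acts on $L$ (so $\lambda(h)=(h\cdot\alpha)/\alpha\in L\cap M=K$), whereas you re-derive it from the explicit $G'$-invariance bookkeeping $\tau(c_i)=c_{ti}$; both are fine, but the paper's observation saves you the reindexing. Where you genuinely diverge is the reverse direction. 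The paper (Proposition \ref{pro:root}) base-changes $H$ to $M$, notes that $M\otimes_KH=M[J]$ is the classical structure on the Galois extension $\widetilde{L}/M$ so that $\alpha$ becomes a $J$-eigenvector, and then simply quotes the already-proved Galois-case criterion (Proposition \ref{charactkummercyclic}) to get $\zeta_n\in M$, $\alpha^n\in M$ and minimality. You instead base-change all the way to $\widetilde{L}$, recognise the eigenvalue map as an algebra character of $\widetilde{L}[J]$, hence a homomorphism $J\to\widetilde{L}^*$, and re-prove the relevant part of Proposition \ref{charactkummercyclic} inline via the size of the orbit of $\alpha$. This is self-contained and arguably illuminating, but it costs you the convention-chasing about the Greither--Pareigis inversion that you flag yourself, and it uses the full hypotheses of \eqref{thmpure2}; the paper's Proposition \ref{pro:root} is deliberately stated under weaker hypotheses (almost abelian, unspecified complement) and \emph{derives} $M=K(\zeta_n)$ and almost cyclicity, which is what lets it be generalized later to Proposition \ref{pro:kummerroot} for the full Theorem \ref{maintheorem1}. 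Two small points to watch: minimality of $n$ alone does not force $x^n-a$ to be irreducible (you need $L\cap M=K$ as well, or the paper's standing irreducibility convention from Section \ref{kummergalois}), and in the reverse direction you should say explicitly that $\alpha^n\in L$ because $\alpha\in L$, so that $\alpha^n\in L\cap M=K$ — you do, so nothing is missing.
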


Note that if in Theorem \ref{thm:charactpure} we impose that $\zeta_n\in K$, then $M=K$ and the condition in \eqref{thmpure2} becomes that $L/K$ is Galois with some primitive element as eigenvector of its Galois action, recovering the statement of Proposition \ref{charactkummercyclic}.

In Theorem \ref{thm:charactpure}, that \eqref{thmpure2} implies \eqref{thmpure1} is an immediate consequence of the following.

\begin{pro}\label{pro:root} Let $n\in\mathbb{Z}_{>0}$ and let $K$ be a field with characteristic coprime to $n$. Let $L/K$ be a degree $n$ almost abelian extension and let $H$ be an almost classically Galois structure corresponding to some Galois complement $M$. Assume that $L/K$ is $H$-cyclic and let $\alpha$ be a primitive element of $L/K$ which is an $H$-eigenvector of $L$. Then $M=K(\zeta_n)$, $\alpha^n\in K$ and no smaller power of $\alpha$ is in $K$. In particular, $L/K$ is an almost cyclic extension.
\end{pro}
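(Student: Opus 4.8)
The plan is to reduce the statement to the Galois Kummer characterization (Proposition \ref{charactkummercyclic}) applied to the Galois extension $\widetilde{L}/M$, and then to descend the resulting information from $M$ down to $K$. Throughout, write $G=\mathrm{Gal}(\widetilde{L}/K)$, $G'=\mathrm{Gal}(\widetilde{L}/L)$ and $J=\mathrm{Gal}(\widetilde{L}/M)$, so that $G=J\rtimes G'$ and, since $L/K$ is almost abelian, $J$ is abelian and $H=M[J]^{G'}$ by Proposition \ref{almostclassicstr}. Because $M$ is a Galois complement we have $\widetilde{L}=LM$ and $L\otimes_K M\cong\widetilde{L}$; as $L=K(\alpha)$ this gives $\widetilde{L}=M(\alpha)$, a Galois extension of $M$ of degree $[\widetilde{L}:M]=[L:K]=n$ with group $J$.

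First I would transfer the $H$-eigenvector condition on $\alpha$ into an eigenvector condition for the Galois action of $J$. Since $\alpha$ is an $H$-eigenvector, the eigenvalue assignment $\lambda\colon H\to K$ given by $h\cdot\alpha=\lambda(h)\alpha$ is a $K$-algebra character. Because $M/K$ is Galois with group identified with $G'$ and $H=(M[J])^{G'}$, Galois descent yields an isomorphism $M\otimes_K H\cong M[J]$ under which the base change of the action of $H$ on $L$ becomes the natural action of $M[J]$ on $M\otimes_K L\cong\widetilde{L}$, each $\sigma\in J$ acting through its Galois automorphism (a direct unwinding of the Greither-Pareigis action formula of Section \ref{secthgtheory}). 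Extending $\lambda$ to $\lambda_M=\lambda\otimes\mathrm{id}_M\colon M[J]\to M$, the element $\alpha$ becomes an eigenvector for all of $M[J]$; in particular $\sigma(\alpha)=\mu_\sigma\alpha$ with $\mu_\sigma=\lambda_M(\sigma)\in M^{*}$ for every $\sigma\in J$. Thus $\alpha$ is a $J$-eigenvector of $\widetilde{L}/M$.

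With $\widetilde{L}=M(\alpha)$ Galois of degree $n$ over $M$ and $\alpha$ a $J$-eigenvector, Proposition \ref{charactkummercyclic}, applied with $M$ in the role of the base field, gives $\zeta_n\in M$, $\alpha^n\in M$, no smaller power of $\alpha$ lying in $M$, and that $\widetilde{L}/M$ is cyclic, i.e. $J$ is cyclic. This already proves that $L/K$ is almost cyclic. It then remains to descend to $K$. Since $\alpha\in L=\widetilde{L}^{G'}$ is fixed by $G'$, so is $\alpha^n$; combined with $\alpha^n\in M=\widetilde{L}^{J}$ and $M^{G'}=K$, this yields $\alpha^n\in K$, and the minimality of $n$ over $K$ is immediate from its minimality over the larger field $M$. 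Finally, as $\alpha^n\in K$, the normal closure is the splitting field $\widetilde{L}=L(\zeta_n)$ of $x^n-\alpha^n$, so $[M:K]=[\widetilde{L}:L]=[L(\zeta_n):L]=[K(\zeta_n):K]$, where the last equality uses $K(\zeta_n)\cap L=K$, which follows from the linear disjointness of $L$ and $M$ (Proposition \ref{pro:lindisjalmostclassic}) together with $\zeta_n\in M$. Since $\zeta_n\in M$ forces $K(\zeta_n)\subseteq M$, the degree equality gives $M=K(\zeta_n)$.

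The main obstacle is the second step: one must check carefully that the descent isomorphism $M\otimes_K H\cong M[J]$ carries the $H$-action on $L$ precisely to the Galois action of $J$ on $\widetilde{L}$, since it is exactly this identification that converts the a priori weak $H$-eigenvector hypothesis (an eigenvalue condition over $K$ for the $G'$-invariant combinations in $M[J]$) into the usable statement that each individual $\sigma\in J$ scales $\alpha$ by an element of $M$. Once this is secured, both the application of Proposition \ref{charactkummercyclic} and the descent to $K$ are routine.
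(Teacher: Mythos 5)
Your proposal is correct and follows essentially the same route as the paper: base change along $M/K$ to identify $M\otimes_KH$ with $M[J]$ acting as the classical Galois structure on $\widetilde{L}/M$, conclude that $\alpha$ is a $J$-eigenvector, apply Proposition \ref{charactkummercyclic} over $M$, and then descend via $L\cap M=K$ and the degree count $\widetilde{L}=L(\zeta_n)=LM$ to get $M=K(\zeta_n)$. The step you flag as the main obstacle is exactly the one the paper handles (rather tersely) by observing that elements of $M\otimes_KH$ are $M$-linear combinations of elements of $H$, so your $\lambda\otimes\mathrm{id}_M$ argument matches the intended reasoning.
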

\begin{proof}
Since $M$ is the Galois complement of $L/K$, we know that $L$ and $M$ are $K$-linearly disjoint and $\widetilde{L}\cong L\otimes_KM$. On the other hand, that $\alpha$ is an $H$-eigenvector means that for each $h\in H$ there is a unique $\lambda(h)\in K$ such that $h\cdot\alpha=\lambda(h)\alpha$. Call $J=\mathrm{Gal}(\widetilde{L}/M)$, which is abelian by hypothesis. From Proposition \ref{almostclassicstr} we obtain that $H=M[J]^{G'}$, so $M\otimes_K H=M[J]$, and this is the classical Galois structure on $\widetilde{L}/M$. Since elements in $M\otimes_KH$ are $M$-linear combinations of elements in $H$ and $h\cdot\alpha=\lambda(h)\alpha$ for every $h\in H$, we have that $\alpha$ is an eigenvector under the action of $M\otimes_KH$. Therefore, $\alpha$ is an eigenvector with respect to the classical Galois structure on $\widetilde{L}/M$, i.e. a $J$-eigenvector. Since $L/K$ has degree $n$, we know that $|J|=n$. Applying Proposition \ref{charactkummercyclic}, we obtain that $\zeta_n\in M$, $\alpha^n\in M$ and no smaller power of $\alpha$ is in $M$. Thus $\alpha^n\in L\cap M=K$, and if $\alpha^k\in K$ with $1\leq k\leq n$, the fact that $\alpha^k\in M$ implies that $k=n$. Then the minimal polynomial of $\alpha$ over $K$ is $x^n-a$ with $a\coloneqq\alpha^n\in K$, so the conjugates of $\alpha$ are of the form $\zeta_n^k\alpha$, $0\leq k\leq n-1$. We deduce that $\widetilde{L}=L(\zeta_n)=LK(\zeta_n)$. Since in addition $\widetilde{L}=LM$ with $L,M$ $K$-linearly disjoint and $K(\zeta_n)\subseteq M$, we conclude that $M=K(\zeta_n)$.
\end{proof}

It is remarkable that unlike in Theorem \ref{thm:charactpure} \eqref{thmpure2}, in Proposition \ref{pro:root} we do not need to assume that the extension $L/K$ is almost cyclic and that its complement is $K(\zeta_n)$. Instead, this is obtained as a consequence of the assumption that the almost classically Galois extension is almost abelian and $H$-cyclic, where $H$ is the almost classically Galois structure corresponding to the fixed complement.

Next, we prove that the first statement of Theorem \ref{thm:charactpure} implies the second one.

\begin{pro}\label{pro:pureisalmostcyclic} Let $n\in\mathbb{Z}_{>0}$, let $K$ be a field with characteristic coprime to $n$ and let $M=K(\zeta_n)$. Let $L=K(\alpha)$ with $\alpha^n\in K$ and such that $n$ is minimal for this property, and assume that $L\cap M=K$. Then $L/K$ is an almost cyclic extension and $\alpha$ is an $H$-eigenvector of $L$.
\end{pro}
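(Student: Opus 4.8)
The plan is to run the logic of Proposition \ref{pro:root} in reverse, obtaining the structural conclusions over $K$ from the corresponding cyclic Kummer facts over $M$. First I would pin down the degree $[L:K]$. Since $M/K$ is Galois and $L\cap M=K$, Proposition \ref{pro:lindisjcharact} gives that $L$ and $M$ are $K$-linearly disjoint, so $[M(\alpha):M]=[L:K]$. As $\zeta_n\in M$ and $\alpha^n=a\in K\subseteq M$, the degree $[M(\alpha):M]$ equals the least positive integer $e$ with $\alpha^e\in M$; but any such $\alpha^e$ lies in $L$ as well, hence in $L\cap M=K$, so the minimality of $n$ forces $e=n$. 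Therefore $[L:K]=n$ and $x^n-a$ is the minimal polynomial of $\alpha$ over $K$. Its splitting field is $K(\alpha,\zeta_n)=LM$, so $\widetilde{L}=LM$, and Proposition \ref{pro:lindisjalmostclassic} then yields that $L/K$ is almost classically Galois with Galois complement $M$.

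Next I would establish the almost cyclic property. Writing $J=\mathrm{Gal}(\widetilde{L}/M)$, the extension $\widetilde{L}/M=M(\alpha)/M$ has degree $n$, satisfies $\zeta_n\in M$ and $\alpha^n\in M$, and no smaller power of $\alpha$ lies in $M$ by the computation above. Proposition \ref{pro:firstcharactcyclic} then shows that $\widetilde{L}/M$ is cyclic, i.e.\ $J$ is cyclic, so $L/K$ is almost cyclic. In particular $J$ is abelian, whence by Proposition \ref{almostclassicstr} we have $H=M[J]^{G'}$ and $M\otimes_K H=M[J]$ is the classical Galois structure on $\widetilde{L}/M$.

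Finally I would verify that $\alpha$ is an $H$-eigenvector. Each $\tau\in J$ sends $\alpha$ to another root $\zeta_n^{i}\alpha$ of $x^n-a$ with $\zeta_n^{i}\in M$, so $\alpha$ is a $J$-eigenvector, and hence an eigenvector for the classical action of $M[J]=M\otimes_K H$ on $\widetilde{L}$ (this is the direction $\eqref{charactkummercyclic1}\Rightarrow\eqref{charactkummercyclic2}$ of Proposition \ref{charactkummercyclic} applied over $M$). Consequently, for each $h\in H\subseteq M\otimes_K H$ there is some $\mu_h\in M$ with $h\cdot\alpha=\mu_h\alpha$. The crux is to descend $\mu_h$ from $M$ to $K$: since the action of $H$ on $L$ is $K$-linear, $h\cdot\alpha\in L$, so $\mu_h=(h\cdot\alpha)/\alpha\in L$, and therefore $\mu_h\in L\cap M=K$. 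Thus $\alpha$ is an $H$-eigenvector. I expect the only genuinely delicate points to be the transfer of the minimality of $n$ from $K$ to $M$ in the first step and this final descent of the eigenvalue, both of which hinge squarely on the hypothesis $L\cap M=K$.
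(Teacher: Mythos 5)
Your proposal is correct and follows essentially the same route as the paper: establish $\widetilde{L}=LM$ with $L$ and $M$ linearly disjoint to get the almost classically Galois property, transfer the minimality of $n$ from $K$ to $M$ via $L\cap M=K$ to see that $\widetilde{L}/M$ is cyclic, and then use $H=M[J]^{G'}$ together with the final descent $\lambda(h)=(h\cdot\alpha)/\alpha\in L\cap M=K$. The only cosmetic difference is that you verify $[L:K]=n$ explicitly at the outset, whereas the paper relies on its standing convention that $x^n-a$ is irreducible over $K$.
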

\begin{proof}
The conjugates of $\alpha$ are $\zeta_n^i\alpha$ with $0\leq i\leq n$, so we have that $\widetilde{L}=LM$. In addition $L\cap M=K$ and $M/K$ is Galois, so $L$ and $M$ are $K$-linearly disjoint. Therefore $L/K$ is almost classically Galois with complement $M$. On the other hand, we have that the normal closure of $L$ is $\widetilde{L}=M(\alpha)$ with $\alpha^n\in M$. If $0<k\leq n$ and $\alpha^k\in M$, then $\alpha^k\in L\cap M=K$, so $k=n$. Hence no power of $\alpha$ smaller than $n$ belongs to $M$, and $\zeta_n\in M$. This proves that $\widetilde{L}/M$ is cyclic, that is, $L/K$ is almost cyclic. On the other hand, from Proposition \ref{charactkummercyclic} we see that for each $\sigma\in J$ there is a unique $\lambda_{\sigma}\in M$ such that $\sigma(\alpha)=\lambda_{\sigma}\alpha$. Now, Proposition \ref{almostclassicstr} gives that $H=M[J]^{G'}$. Hence for each $h\in H$ there are $h_1,\dots,h_n\in M$ such that $h=\sum_{i=1}^nh_i\sigma_i$. Then $$h\cdot\alpha=\Big(\sum_{i=1}^nh_i\sigma_i\Big)\cdot\alpha=\Big(\sum_{i=1}^nh_i\lambda_{\sigma_i}\Big)\alpha.$$ Write $\lambda(h)=\sum_{i=1}^nh_i\lambda_{\sigma_i}\in M$. Then $h\cdot\alpha=\lambda(h)\alpha\in L$, so $\lambda(h)\in L\cap M=K$ and $\alpha$ is an $H$-eigenvector of $L$.
\end{proof}

The proof of Theorem \ref{thm:charactpure} is immediate from Propositions \ref{pro:root} and \ref{pro:pureisalmostcyclic}.

If $n$ is a Burnside number (i.e, with the property that $\mathrm{gcd}(n,\varphi(n))=1$), then the condition $L\cap M=K$ is always fulfilled, as $[M:K]$ is always a divisor of $\varphi(n)$, where $\varphi$ is the Euler totient function, and hence $[L:K]$ and $[M:K]$ are coprime. In that case, the almost classically Galois structure $H$ on $L/K$ corresponding to $K(\zeta_n)$ is the unique Hopf-Galois structure on $L/K$ (see \cite[Theorem 2]{byottuniqueness}), and the simple radical degree $n$ extensions of $K$ are the degree $n$ almost cyclic $H$-cyclic extensions of $K$.

It is possible to use Theorem \ref{thm:charactpure} to derive an injective correspondence from a subset of degree $n$ almost cyclic extensions of $K$ to cyclic subgroups of $K^*/(K^*)^n$. We follow the same idea as in Corollary \ref{corocorrgalois}: a simple radical extension $K(\sqrt[n]{a})$ determines a cyclic subgroup $\langle a\rangle$ of $K^*/(K^*)^n$. The following result is a generalization of \cite[Chapter III, Lemma 3]{casselsfrohlich} in which the assumption $\zeta_n\in K$ is removed.

\begin{lema}\label{lemmadiffhopf} Let $n\in\mathbb{Z}_{>0}$ and let $K$ be a field with characteristic coprime to $n$. The top fields of two degree $n$ almost cyclic extensions $K(\sqrt[n]{a})/K$ and $K(\sqrt[n]{b})/K$ with $a,b\in K$ and complement $M=K(\zeta_n)$ are $K$-isomorphic if and only if $a=b^rc^n$ with $r\in\mathbb{Z}_{\geq0}$ coprime to $n$ and $c\in M$.
\end{lema}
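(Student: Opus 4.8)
The plan is to fix roots $\alpha,\beta\in\overline{K}$ with $\alpha^n=a$ and $\beta^n=b$, and write $L_a=K(\alpha)$, $L_b=K(\beta)$. By Theorem \ref{thm:charactpure}, each of these is a simple radical degree $n$ extension linearly disjoint with $M$, so in particular $L_a\cap M=L_b\cap M=K$, the minimal polynomial of $\alpha$ (resp. $\beta$) over $K$ is $x^n-a$ (resp. $x^n-b$), and the normal closure $\widetilde{L_b}=L_bM=M(\beta)$ is cyclic Kummer of degree $n$ over $M$ with group $J=\langle\sigma\rangle$, where I normalize the generator $\sigma$ so that $\sigma(\beta)=\zeta_n\beta$.

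For the backward implication, suppose $a=b^rc^n$ with $c\in K^*$ and $\gcd(r,n)=1$. I set $\gamma=\beta^rc\in L_b$ and observe that $\gamma^n=b^rc^n=a$. Because $\gcd(r,n)=1$, for $0<k<n$ one has $\gamma^k=\beta^{rk}c^k\in K$ only if $n\mid rk$, i.e. $n\mid k$, which is impossible; hence $n$ is minimal with $\gamma^n\in K$ and $[K(\gamma):K]=n=[L_b:K]$, giving $K(\gamma)=L_b$. Since $\gamma$ and $\alpha$ are both roots of the same irreducible polynomial $x^n-a$, the assignment $\alpha\mapsto\gamma$ defines a $K$-isomorphism $L_a=K(\alpha)\xrightarrow{\sim}K(\gamma)=L_b$.

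For the forward implication, let $\phi\colon L_a\to L_b$ be a $K$-isomorphism and put $\delta=\phi(\alpha)\in L_b$, which generates $L_b/K$ and satisfies $\delta^n=\phi(a)=a$. The normal closure of $K(\delta)=L_b$ is $M(\delta)=\widetilde{L_b}$, so $\delta$ generates a degree $n$ extension of $M$. As $\delta^n=a\in M$ is fixed by $\sigma$, I get $\sigma(\delta)=\zeta_n^r\delta$ for a unique $0\le r<n$; and since $\delta$ has exactly $n$ conjugates over $M$, the orbit $\{\zeta_n^{rj}\delta\}_j$ must have size $n$, forcing $\gcd(r,n)=1$. I then form $w=\delta\beta^{-r}\in L_b$ and compute $\sigma(w)=\zeta_n^r\delta\,(\zeta_n\beta)^{-r}=w$; as $J=\langle\sigma\rangle$, this shows $w\in\widetilde{L_b}^{\,J}=M$. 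Hence $w\in L_b\cap M=K$, and $w^n=ab^{-r}$ yields $a=b^rw^n$ with $w\in K^*$ and $\gcd(r,n)=1$.

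The only genuinely delicate point is the descent in the forward direction: Kummer theory over $M$ readily produces an identity $a=b^rd^n$ with $d$ lying only in $M^*$, whereas the statement demands the factor $c$ in $K^*$. The device that resolves this is precisely the element $w=\delta\beta^{-r}$: it is manifestly in $L_b$ by construction, and it is in $M$ because it is fixed by the cyclic group $J$, so the linear disjointness $L_b\cap M=K$ (guaranteed by the almost cyclic hypothesis through Theorem \ref{thm:charactpure}) forces it into $K$. I expect this interplay between the Galois action of $J$ on $\widetilde{L_b}/M$ and the disjointness of $L_b$ from $M$ to be the crux of the argument; everything else reduces to the minimal-polynomial bookkeeping already used in Proposition \ref{charactkummercyclic}.
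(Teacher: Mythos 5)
Your proof is correct, but it takes a more self-contained route than the paper. The paper's own proof is essentially a one-line reduction: after normalizing $a,b$, it observes that the normal closures are $M(\sqrt[n]{a})$ and $M(\sqrt[n]{b})$ with $M=K(\zeta_n)$, and then invokes \cite[Chapter III, Lemma 3]{casselsfrohlich} for the Kummer extensions of $M$. You instead reprove the relevant Kummer-theoretic facts directly over $M$ and, more importantly, you supply the descent step that the paper's citation leaves implicit: the cited lemma only produces a factorization $a=b^rd^n$ with $d\in M^*$, whereas the statement (and its use in Corollary \ref{coro:corrkummerHcyclic}, which concerns subgroups of $K^*/(K^*)^n$) needs the cofactor in $K^*$. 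Your element $w=\delta\beta^{-r}$, which lies in $M$ because it is fixed by $J=\mathrm{Gal}(\widetilde{L_b}/M)$ and in $L_b$ by construction, hence in $L_b\cap M=K$, is exactly the device that closes this gap; this is a genuine gain in precision over the paper's argument. One small repair in your backward direction: the inference ``$n$ is minimal with $\gamma^n\in K$, hence $[K(\gamma):K]=n$'' is not valid in general (e.g.\ $\gamma=1+i$ over $\mathbb{Q}$ has minimal exponent $4$ but degree $2$). Here it is harmless because the conclusion follows anyway: $x^n-a$ is irreducible over $K$ by the paper's standing convention (it is what makes $K(\sqrt[n]{a})/K$ have degree $n$), so any root of it, in particular $\gamma$, generates a degree $n$ extension; alternatively, writing $ur+vn=1$ gives $\beta=(\beta^r)^u b^v=(\gamma c^{-1})^u b^v\in K(\gamma)$, whence $K(\gamma)=L_b$.
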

\begin{proof}
We can assume without loss of generality that $K(\sqrt[n]{a})/K$ and $K(\sqrt[n]{b})/K$ have degree $n$. Since $\zeta_n\in M$, the fields $M(\sqrt[n]{a})$ and $M(\sqrt[n]{b})$ are in the conditions of \cite[Chapter III, Lemma 3]{casselsfrohlich}. We obtain that $M(\sqrt[n]{a})=M(\sqrt[n]{b})$ if and only if $a=b^rc^n$ with $r\in\mathbb{Z}_{\geq0}$ coprime to $n$ and $c\in M$. Now, note that by Theorem \ref{thm:charactpure}, the fields $K(\sqrt[n]{a})/K$ and $K(\sqrt[n]{b})/K$ are almost classically Galois with complement $M$, so that their Galois closures are $M(\sqrt[n]{a})$ and $M(\sqrt[n]{b})$ respectively. This implies that each one of $M(\sqrt[n]{a})$ and $M(\sqrt[n]{b})$ contains $n$ subfields with degree $n$ over $K$ (corresponding to the $n$ roots of $x^n-a$) that are $K$-isomorphic with each other. Then, the equality $M(\sqrt[n]{a})=M(\sqrt[n]{b})$ is equivalent to $K(\sqrt[n]{a})$ and $K(\sqrt[n]{b})$ being $K$-isomorphic, and the statement follows.
\end{proof}

The main difference with respect to the Galois case is that if $\zeta_n\notin K$, the label $K(\sqrt[n]{a})$ does not determine a unique extension of $K$, but a $K$-isomorphism class of these. Hence, Lemma \ref{lemmadiffhopf} means that $a$ and $b$ generate a rank $2$ subgroup of $K^*/(K^*)^n$ if and only if they generate extensions of $K$ lying in different $K$-isomorphism classes. Thus, an extension $L=K(\alpha)$ with $\alpha^n=a\in K$ is assigned to the projection of the cyclic subgroup $\langle a\rangle$ in $K^*/(K^*)^n$, but the extensions generated by the conjugates of $\alpha$ are also sent to the same subgroup. In order to obtain an injective correspondence, we need to identify all such extensions.

On the other hand, recall that when a simple radical extension $K(\sqrt[n]{a})$ is linearly disjoint with $K(\zeta_n)$, then by Theorem \ref{thm:charactpure}, it uniquely determines an almost cyclic extension of $K$ that is $H$-cyclic, where $H$ corresponds to $K(\zeta_n)$. All together, we obtain the following:

\begin{coro}\label{coro:corrkummerHcyclic} Let $n\in\mathbb{Z}_{>0}$ and let $K$ be a field with characteristic coprime to $n$. There is an injective correspondence from the $K$-isomorphism classes of degree $n$ almost cyclic extensions $L/K$ that are $H$-cyclic, where $H$ is the almost classically Galois structure on $L/K$ corresponding to $K(\zeta_n)$, to the cyclic subgroups of $K^*/(K^*)^n$. Moreover, a cyclic subgroup $\langle a\rangle$ of $K^*/(K^*)^n$ lies in the image of this correspondence if and only if $K(\sqrt[n]{a})\cap K(\zeta_n)=K$.
\end{coro}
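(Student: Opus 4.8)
The plan is to use Theorem \ref{thm:charactpure} to identify the source of the correspondence: its objects, the $K$-isomorphism classes of degree $n$ almost cyclic $H$-cyclic extensions $L/K$ (with $H$ corresponding to $K(\zeta_n)$), are exactly the $K$-isomorphism classes of simple radical degree $n$ extensions $K(\sqrt[n]{a})$ that are linearly disjoint with $M=K(\zeta_n)$. To such a class I would assign the cyclic subgroup $\langle a\rangle\le K^*/(K^*)^n$ generated by $a=\alpha^n$, where $\alpha$ is a primitive $H$-eigenvector generator (which satisfies $\alpha^n\in K$ by Proposition \ref{pro:root}). The whole statement then reduces to analyzing the assignment $K(\sqrt[n]{a})\mapsto\langle a\rangle$, whose essential input is Lemma \ref{lemmadiffhopf}.

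First I would check that the map is well defined on isomorphism classes. If $K(\sqrt[n]{a})\cong_K K(\sqrt[n]{b})$ are two such extensions (this also covers replacing the $H$-eigenvector generator inside a fixed field, where the two radical fields coincide literally), then Lemma \ref{lemmadiffhopf} gives $a=b^rc^n$ with $c\in K^*$ and $\gcd(r,n)=1$. Since $\bar b$ has order dividing $n$ in $K^*/(K^*)^n$, the power $\bar b^r$ generates $\langle\bar b\rangle$, so $\langle\bar a\rangle=\langle\bar b\rangle$ and the subgroup depends only on the class.

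For injectivity I would run Lemma \ref{lemmadiffhopf} in reverse. If $\langle\bar a\rangle=\langle\bar b\rangle$ then $\bar a=\bar b^r$ for some $r$ coprime to the order of $\bar b$; to invoke the lemma I must upgrade this to $\gcd(r,n)=1$, and this is where the almost cyclic hypothesis enters through an order computation. The claim is that the order of $\bar a$ in $K^*/(K^*)^n$ equals $n$ for any extension in the image: passing to $M$, linear disjointness of $L$ and $M$ gives $[M(\alpha):M]=[L:K]=n$, and since $\zeta_n\in M$ classical Kummer theory identifies this degree with the order of $\bar a$ in $M^*/(M^*)^n$, which is therefore $n$. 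Because $(K^*)^n\subseteq(M^*)^n$, the order over $M$ divides the order over $K$, while $a^n\in(K^*)^n$ forces the order over $K$ to divide $n$; hence both orders equal $n$. With order exactly $n$ we get $\gcd(r,n)=1$, so $a=b^rc^n$ and Lemma \ref{lemmadiffhopf} returns a $K$-isomorphism, yielding injectivity.

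Finally I would identify the image. If $\langle a\rangle$ lies in the image it comes from an extension which by Theorem \ref{thm:charactpure} is linearly disjoint with $M$, whence $K(\sqrt[n]{a})\cap K(\zeta_n)=K$; conversely, if $K(\sqrt[n]{a})\cap K(\zeta_n)=K$ for an irreducible $x^n-a$, then $L=K(\sqrt[n]{a})$ is a simple radical degree $n$ extension with $L\cap M=K$, so $L$ and $M$ are linearly disjoint by Proposition \ref{pro:lindisjcharact}, and Theorem \ref{thm:charactpure} makes $L/K$ an almost cyclic $H$-cyclic extension mapping to $\langle a\rangle$. I expect the main obstacle to be precisely the order computation feeding injectivity, namely reconciling the congruence ``$r$ coprime to the order of $\bar b$'' coming from the subgroup equality with the condition ``$r$ coprime to $n$'' required by Lemma \ref{lemmadiffhopf}; a secondary point to verify is that the image condition $K(\sqrt[n]{a})\cap K(\zeta_n)=K$ is insensitive to the chosen generator of $\langle a\rangle$, which holds because $K$-isomorphic subfields of $\overline{K}$ are simultaneously linearly disjoint (or not) from the Galois extension $M/K$.
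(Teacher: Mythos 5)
Your proposal is correct and follows essentially the same route as the paper: Theorem \ref{thm:charactpure} identifies the objects of the correspondence with the $K$-isomorphism classes of simple radical degree $n$ extensions linearly disjoint from $K(\zeta_n)$, and Lemma \ref{lemmadiffhopf} converts $K$-isomorphism into the relation $a=b^rc^n$ with $\gcd(r,n)=1$. Your explicit verification that $\bar a$ has order exactly $n$ in $K^*/(K^*)^n$ (by passing to $M$, where classical Kummer theory applies) is a detail the paper leaves implicit when it asserts that equality of the cyclic subgroups forces the extensions to be $K$-isomorphic, and it is a worthwhile addition.
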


\subsection{The general case: radical extensions}

In this part we will provide a complete proof of Theorem \ref{maintheorem1}. We want to apply the construction in Section \ref{sectprodhg} to classes of Kummer extensions of $K$, either Galois or Hopf-Galois, to obtain Hopf-Galois structures in the compositum of those. We have already characterized simple radical extensions of $K$ as almost cyclic $H$-cyclic extensions.

The following result can be seen as a generalization of Proposition \ref{pro:root} for almost classically Galois extensions that are $H$-Kummer.

\begin{pro}\label{pro:kummerroot} Let $n\in\mathbb{Z}_{>0}$ and let $K$ be a field with characteristic coprime to $n$. Let $L/K$ be an almost abelian extension of exponent $n$ and let $H$ be an almost classically Galois structure corresponding to some Galois complement $M$ such that $\widetilde{L}/M$ is abelian. Assume that $L/K$ is $H$-Kummer and let $\{\alpha_1,\dots,\alpha_k\}$ be a generating set of $H$-eigenvectors for $L/K$. Then $M=K(\zeta_n)$, $\alpha_i^n\in K$ for all $1\leq i\leq k$, and $n$ is minimal for this property. In particular, $L/K$ is almost Kummer.
\end{pro}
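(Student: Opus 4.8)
The plan is to mimic the strategy of Proposition \ref{pro:root} but replace the cyclic characterization by the general Kummer characterization of Theorem \ref{charactkummereig}, reducing everything to the Galois extension $\widetilde{L}/M$. Since $L/K$ is almost abelian, its normal complement $J=\mathrm{Gal}(\widetilde{L}/M)$ is abelian, and Proposition \ref{almostclassicstr} gives $H=M[J]^{G'}$. Extending scalars, $M\otimes_KH=M[J]$ is precisely the classical Galois structure on $\widetilde{L}/M$. Because $\widetilde{L}=LM$ and $L=K(\alpha_1,\dots,\alpha_k)$, the set $\{\alpha_1,\dots,\alpha_k\}$ also generates $\widetilde{L}$ over $M$, and by hypothesis $J$ has exponent $n$. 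So all the ingredients needed to apply Theorem \ref{charactkummereig} to $\widetilde{L}/M$ are in place, except the eigenvector property with respect to $J$.

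The key step is to transfer the $H$-eigenvector property of each $\alpha_i$ to a $J$-eigenvector property. For this I would use that $H$ spans $M[J]$ over $M$: comparing dimensions, $\dim_M M[J]=|J|=n=\dim_K H=\dim_M(M\otimes_K H)$, so the $M$-span of $H$ is all of $M[J]$. Since $h\cdot\alpha_i=\lambda(h)\alpha_i$ with $\lambda(h)\in K$ for every $h\in H$, taking $M$-linear combinations shows that $\alpha_i$ is an eigenvector for every element of $M[J]$, with eigenvalue in $M$; in particular $\sigma(\alpha_i)=\mu_\sigma\alpha_i$ with $\mu_\sigma\in M$ for each $\sigma\in J$. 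Thus $\{\alpha_1,\dots,\alpha_k\}$ is a generating set of $J$-eigenvectors for $\widetilde{L}/M$.

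Applying Theorem \ref{charactkummereig} to $\widetilde{L}/M$ then yields $\zeta_n\in M$, $\alpha_i^n\in M$ for all $i$, and the minimality of $n$ for this property over $M$. The descent to $K$ is immediate from linear disjointness: since $\alpha_i\in L$ and $\alpha_i^n\in M$, we get $\alpha_i^n\in L\cap M=K$; and any positive integer $m$ with all $\alpha_i^m\in K$ would force all $\alpha_i^m\in M$, so minimality over $M$ gives $m\geq n$, i.e. minimality over $K$. The ``almost Kummer'' conclusion then follows because $\widetilde{L}/M$ is abelian of exponent $n$ with $\zeta_n\in M$.

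The remaining point, and the place where the general case genuinely departs from the cyclic one, is proving $M=K(\zeta_n)$: here I cannot read off the minimal polynomial of a single generator from its degree being equal to $n$. Instead I would argue via conjugates. Each $\alpha_i$ is a root of $x^n-a_i$ with $a_i=\alpha_i^n\in K$, so every conjugate of $\alpha_i$ over $K$ has the form $\zeta_n^j\alpha_i\in K(\zeta_n)\,L$. As $\widetilde{L}$ is generated over $K$ by these conjugates, $\widetilde{L}\subseteq K(\zeta_n)\,L$, and the reverse inclusion is clear since $\zeta_n\in M\subseteq\widetilde{L}$, so $\widetilde{L}=K(\zeta_n)\,L$. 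Combining $\widetilde{L}=LM$ with the $K$-linear disjointness of $L$ and $M$ and the inclusion $K(\zeta_n)\subseteq M$ forces $[M:K]=[\widetilde{L}:L]=[K(\zeta_n)\,L:L]\leq[K(\zeta_n):K]\leq[M:K]$, whence equality throughout and $M=K(\zeta_n)$. I expect this last normalization of the complement to be the main obstacle, as it is the only step that does not reduce formally to the already-established Galois statement.
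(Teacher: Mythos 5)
Your proof is correct and follows essentially the same route as the paper's: extend scalars via Proposition \ref{almostclassicstr} to identify $M\otimes_KH$ with the classical structure $M[J]$ on $\widetilde{L}/M$, transfer the eigenvector property to $J$, apply Theorem \ref{charactkummereig} over $M$, descend to $K$ by linear disjointness, and pin down $M=K(\zeta_n)$ from $\widetilde{L}=LK(\zeta_n)=LM$. The only blemish is the line $\dim_M M[J]=|J|=n$: since $n$ is the exponent of $J$ rather than its order, this should read $|J|=[\widetilde{L}:M]=[L:K]=\dim_KH$, which is what the span argument actually needs.
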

\begin{proof}
By definition we have that $L\cap M=K$ and $\widetilde{L}\cong L\otimes_KM$. Then we have that $\widetilde{L}=M(\alpha_1,\dots,\alpha_k)$. Let $J=\mathrm{Gal}(\widetilde{L}/M)$, so that $H=M[J]^{G'}$ and $M\otimes_KH=M[J]$, which is the classical Galois structure on the Galois extension $\widetilde{L}/M$. Then $\{\alpha_1,\dots,\alpha_k\}$ is a generating set of $J$-eigenvectors for $\widetilde{L}/M$. Moreover, by assumption, the group $J$ has exponent $n$. Using Theorem \ref{charactkummereig}, we obtain that $\zeta_n\in M$ and $\widetilde{L}/M$ is Kummer with generators $\alpha_1,\dots,\alpha_k$. Thus $\alpha_i^n\in M$ for all $1\leq i\leq k$ and $n$ is minimal for this property, meaning that for each $1\leq l<n$ there is some $1\leq i\leq k$ such that $\alpha_i^l\notin M$. Hence $\alpha_i^n\in L\cap M=K$ and if there is some $1\leq l\leq n$ such that for every $1\leq i\leq k$ we have that $\alpha_i^l\in K$, then $\alpha_i^l\in M$ for every $1\leq i\leq k$, so $l=n$. Finally, we have that $\widetilde{L}=LK(\zeta_n)=LM$ with $K(\zeta_n)\subseteq M$, so necessarily $M=K(\zeta_n)$.
\end{proof}

As an immediate consequence, in Theorem \ref{maintheorem1}, \eqref{mainthm12} implies \eqref{mainthm11}. Moreover, as in the case of simple radical extensions, it is not necessary to assume that the extension is almost Kummer and that $M=K(\zeta_n)$, as these are implied by the hypothesis that the almost classically Galois extension is almost abelian and $H$-Kummer.

As for the converse, we will need the following natural generalization of Lemma \ref{prodgaloiseig} to this setting.

\begin{lema}\label{lem:prodeighopf} Let $L_1/K$ and $L_2/K$ be strongly disjoint almost classically Galois extensions. For each $i\in\{1,2\}$, let $H_i$ be a Hopf-Galois structure on $L_i/K$ and let $\alpha_i\in L_i$ be an $H_i$-eigenvector. Let $L=L_1L_2$ and let $H$ be the product Hopf-Galois structure of $H_1$ and $H_2$ on $L$. Then $\alpha_1\alpha_2$ is an $H$-eigenvector of $L$. In particular, the union of generating sets of $H_i$-eigenvectors for $L_i$ with $i\in\{1,2\}$ is a generating set of $H$-eigenvectors for $L$.
\end{lema}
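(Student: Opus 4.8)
The plan is to reduce the claim to a verification on a $K$-basis of $H$ and then exploit the multiplicativity of the product action recorded in Proposition \ref{pro:prodhgstr}. As observed in Section \ref{sect:kummerhopfgalois}, to certify that $\alpha_1\alpha_2$ is an $H$-eigenvector it suffices to produce, for each element $w$ of some fixed $K$-basis of $H$, a scalar $\lambda(w)\in K$ with $w\cdot(\alpha_1\alpha_2)=\lambda(w)\alpha_1\alpha_2$; the $K$-linear extension then delivers the eigenvalue $\lambda(h)=\sum_i h_i\lambda(w_i)$ for an arbitrary $h=\sum_i h_iw_i$. Since by Proposition \ref{pro:prodhgstr} we have $H\cong H_1\otimes_KH_2$ as $K$-algebras, I would pick $K$-bases $\{w_i^{(1)}\}$ of $H_1$ and $\{w_j^{(2)}\}$ of $H_2$ and take the products $\{w_i^{(1)}w_j^{(2)}\}$ as the adapted basis of $H$.

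The heart of the computation is then the action of such a basis element. As $\alpha_1$ is an $H_1$-eigenvector and $\alpha_2$ an $H_2$-eigenvector, there are $\lambda_{\alpha_1}(w_i^{(1)}),\lambda_{\alpha_2}(w_j^{(2)})\in K$ with $w_i^{(1)}\cdot\alpha_1=\lambda_{\alpha_1}(w_i^{(1)})\alpha_1$ and $w_j^{(2)}\cdot\alpha_2=\lambda_{\alpha_2}(w_j^{(2)})\alpha_2$. Proposition \ref{pro:prodhgstr} then yields
\[
(w_i^{(1)}w_j^{(2)})\cdot(\alpha_1\alpha_2)=(w_i^{(1)}\cdot\alpha_1)(w_j^{(2)}\cdot\alpha_2)=\lambda_{\alpha_1}(w_i^{(1)})\lambda_{\alpha_2}(w_j^{(2)})\,\alpha_1\alpha_2,
\]
and the eigenvalue $\lambda_{\alpha_1}(w_i^{(1)})\lambda_{\alpha_2}(w_j^{(2)})$ lies in $K$. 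Running over the whole basis shows that $\alpha_1\alpha_2$ is an $H$-eigenvector.

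For the final assertion, I would take $S_i\subseteq L_i$ a generating set of $H_i$-eigenvectors, so that $L_i=K(S_i)$. The compositum satisfies $L=L_1L_2=K(S_1\cup S_2)$, so $S_1\cup S_2$ generates $L/K$; it remains to see that every element of $S_1\cup S_2$ is an $H$-eigenvector of $L$. Here I would invoke the fact that the constant $1$ is an eigenvector for any Hopf-Galois action (indeed $h\cdot 1=\epsilon_H(h)\in K$): for $\alpha\in S_1$ I write $\alpha=\alpha\cdot 1$ with $1\in L_2$ an $H_2$-eigenvector and apply the first part, and symmetrically for $\alpha\in S_2$. Thus $S_1\cup S_2$ is a generating set of $H$-eigenvectors for $L$.

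I do not expect a genuine obstacle: the entire content is packaged in Proposition \ref{pro:prodhgstr}. The only points requiring mild care are the choice of a tensor-product basis of $H$, which makes the action formula directly applicable, and the observation that $1$ is always an eigenvector, which is what allows the two generating sets to be merged into one.
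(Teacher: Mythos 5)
Your proof is correct and follows essentially the same route as the paper: the core step is the multiplicativity of the product action from Proposition \ref{pro:prodhgstr} applied to the eigenvector relations, and the merging of generating sets via the observation that $1$ is always an eigenvector is exactly the paper's argument. The only (favorable) difference is that you verify the eigenvector condition on a basis of simple tensors $w_i^{(1)}w_j^{(2)}$ and extend by $K$-linearity, which handles general elements of $H\cong H_1\otimes_KH_2$ a bit more carefully than the paper's shorthand of writing every $h\in H$ as a single product $h_1h_2$.
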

\begin{proof}
Let $h_1\in H_1$ and $h_2\in H_2$ and consider $h=h_1h_2\in H$. Since $\alpha_i$ is an $H_i$-eigenvector of $L_i$, there are $\lambda_1(h_1),\lambda_2(h_2)\in K$ such that $h_1\cdot\alpha_1=\lambda_1(h_1)\alpha_1$ and $h_2\cdot\alpha_2=\lambda_2(h_2)\alpha_2$. From Proposition \ref{pro:prodhgstr} we obtain that $$h\cdot(\alpha_1\alpha_2)=(h_1\cdot\alpha_1)(h_2\cdot\alpha_2)=\lambda_1(h_1)\lambda_2(h_2)\alpha_1\alpha_2=\lambda(h)\alpha_1\alpha_2,$$ where $\lambda(h)=\lambda_1(h_1)\lambda_2(h_2)\in K$. Now, any element of $H$ is a sum of elements of this form, so $\alpha_1\alpha_2$ is an $H$-eigenvector. For the last sentence, note that since $1$ is an eigenvector for a Hopf-Galois structure on any extension, any $H_i$-eigenvector of $L_i$ is also an $H$-eigenvector of $L$.
\end{proof}

Now, the other implication is proved.

\begin{pro}\label{pro:radicalisalmostkummer} Let $n\in\mathbb{Z}_{>0}$, let $K$ be a field with characteristic coprime to $n$ and let $M=K(\zeta_n)$. Let $L/K$ be a strongly decomposable extension and let $\alpha_1,\dots,\alpha_k\in L$ be such that $L=K(\alpha_1,\dots,\alpha_k)$ and $K(\alpha_i)$, $K(\alpha_j)$ are strongly disjoint whenever $i\neq j$. Assume that $L\cap M=K$, $\alpha_i^n\in K$ for all $1\leq i\leq k$, and $n$ is minimal for this property. Then $L/K$ is almost Kummer of exponent $n$ with complement $M$ and $\alpha_1,\dots,\alpha_k$ are $H$-eigenvectors, where $H$ is the almost classically Galois structure corresponding to $M$.
\end{pro}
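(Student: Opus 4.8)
The plan is to mirror the proof of the single-generator case, Proposition~\ref{pro:pureisalmostcyclic}, replacing its cyclic input Proposition~\ref{charactkummercyclic} by the Kummer characterization Theorem~\ref{charactkummereig}, and to carry the whole argument out by descending to the Galois closure over $M$. For each $i$ let $n_i$ be the least positive integer with $\alpha_i^{n_i}\in K$; then $n_i\mid n$, so $\zeta_{n_i}\in M$, and $n$ is the least common multiple of the $n_i$. First I would transfer minimality to $M$: if some $m<n$ satisfied $\alpha_i^m\in M$ for all $i$, then $\alpha_i^m\in L\cap M=K$ for all $i$, contradicting the minimality of $n$ over $K$; the same argument applied to a single index shows $n_i$ is also the least power of $\alpha_i$ lying in $M$. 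Since $\zeta_{n_i}\in M$, standard Kummer theory over $M$ (as in Proposition~\ref{pro:firstcharactcyclic}) then gives that $x^{n_i}-\alpha_i^{n_i}$ is irreducible over $M$ and $[M(\alpha_i):M]=n_i$; comparing with $[M(\alpha_i):M]\le[K(\alpha_i):K]\le n_i$ forces $[K(\alpha_i):K]=n_i$ and the irreducibility of $x^{n_i}-\alpha_i^{n_i}$ already over $K$. Hence the conjugates of $\alpha_i$ are exactly $\zeta_{n_i}^{j}\alpha_i$, and their ratios put $\zeta_{n_i}$ in $\widetilde{L}$ for every $i$, so $M=K(\zeta_{n_1},\dots,\zeta_{n_k})\subseteq\widetilde{L}$. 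Together with $L\subseteq\widetilde{L}$ and the normality of $LM=M(\alpha_1,\dots,\alpha_k)$ over $K$, this yields $\widetilde{L}=LM=M(\alpha_1,\dots,\alpha_k)$.

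With the normal closure pinned down, the structural conclusions follow formally. Since $L\cap M=K$ and $M/K$ is Galois, $L$ and $M$ are linearly disjoint by Proposition~\ref{pro:lindisjcharact}, and $\widetilde{L}=LM$, so by Proposition~\ref{pro:lindisjalmostclassic} the extension $L/K$ is almost classically Galois with Galois complement $M$. Put $J=\mathrm{Gal}(\widetilde{L}/M)$. Applying Theorem~\ref{charactkummereig} over $M$ (where $\zeta_n\in M$) to $\widetilde{L}=M(\alpha_1,\dots,\alpha_k)$, for which $\alpha_i^n\in M$ with $n$ minimal as just verified, shows that $\widetilde{L}/M$ is Kummer of exponent $n$ and that $\{\alpha_1,\dots,\alpha_k\}$ is a generating set of $J$-eigenvectors. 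In particular $J$ is abelian of exponent $n$, so $L/K$ is almost Kummer of exponent $n$ with complement $M$, as required.

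It remains to see that the $\alpha_i$ are $H$-eigenvectors for the almost classically Galois structure $H$ corresponding to $M$, and here I would argue exactly as at the end of Proposition~\ref{pro:pureisalmostcyclic}. Since $J$ is abelian, Proposition~\ref{almostclassicstr} gives $H=M[J]^{G'}$, and $M\otimes_K H=M[J]$ is the classical Galois structure on $\widetilde{L}/M$. Fixing $i$, each $\sigma\in J$ acts by $\sigma(\alpha_i)=\lambda_{\sigma,i}\alpha_i$ with $\lambda_{\sigma,i}\in M$; writing $h=\sum_s h_s\sigma_s\in H$ with $h_s\in M$ and $\sigma_s\in J$, we get $h\cdot\alpha_i=\big(\sum_s h_s\lambda_{\sigma_s,i}\big)\alpha_i=\lambda_i(h)\alpha_i$ with $\lambda_i(h)\in M$. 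But $h\cdot\alpha_i\in L$ and $\alpha_i\in L$, so $\lambda_i(h)=(h\cdot\alpha_i)/\alpha_i\in L\cap M=K$; thus $\alpha_i$ is an $H$-eigenvector.

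The step I expect to be the main obstacle is the identification $\widetilde{L}=M(\alpha_1,\dots,\alpha_k)$ of the first paragraph, specifically proving $\zeta_n\in\widetilde{L}$, which rests on the degree computation over $M$ that upgrades the minimality of each $n_i$ to the irreducibility of $x^{n_i}-\alpha_i^{n_i}$; once this is in place the remaining steps are routine. I would stress that the descent to $M$, rather than an assembly of the $k$ factors $K(\alpha_i)$ via the product Hopf-Galois structure of Section~\ref{sectprodhg}, is what makes the proof go through for an arbitrary generating set: the generators need not be mutually independent (for instance $\sqrt{2},\sqrt{3},\sqrt{6}$ over $\mathbb{Q}$ are pairwise strongly disjoint yet $\mathbb{Q}(\sqrt{6})\subseteq\mathbb{Q}(\sqrt{2},\sqrt{3})$), so the product construction on all $k$ factors simply does not apply, whereas Theorem~\ref{charactkummereig} is stated for generating sets of eigenvectors and handles this uniformly. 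Accordingly the strong-disjointness hypotheses beyond $L\cap M=K$ are not needed for this implication; they enter Theorem~\ref{maintheorem1} to exhibit $H$ concretely as a product Hopf-Galois structure and for the associated module-theoretic results.
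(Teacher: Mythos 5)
Your proof is correct, but it takes a genuinely different route from the paper's. The paper decomposes $L$ as the compositum of the simple radical extensions $L_i=K(\alpha_i)$, applies Theorem \ref{thm:charactpure} to each factor (so that $L_i/K$ is almost cyclic with complement $K(\zeta_{n_i})$ and $\alpha_i$ is an $H_i$-eigenvector), and then assembles the pieces with the machinery of Section \ref{sectprodhg}: Proposition \ref{pro:prodalmostclassical} to realize the almost classically Galois structure corresponding to $M$ as the product of the $H_i$, Lemma \ref{lem:prodeighopf} to propagate the eigenvector property, and Lemma \ref{lema:embedding} to see that $J\cong\prod_i J_i$ is abelian of exponent $n$ --- this assembly is exactly where the strong disjointness hypothesis is consumed. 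You bypass the product construction entirely: you pin down $\widetilde{L}=M(\alpha_1,\dots,\alpha_k)$ by transferring the minimality of the $n_i$ to $M$ and running a degree/irreducibility argument over $M$, obtain the complement from Proposition \ref{pro:lindisjalmostclassic}, apply the purely Galois-theoretic Theorem \ref{charactkummereig} over $M$, and descend the eigenvector property by the same $L\cap M=K$ computation as at the end of Proposition \ref{pro:pureisalmostcyclic}. Each route buys something: the paper's exhibits $H$ explicitly as a product Hopf-Galois structure, which is what gets exploited later (e.g.\ Proposition \ref{pro:freeraddisj}), while yours is more economical and, as you correctly observe, never uses the strong disjointness of the $K(\alpha_i)$, so it establishes a slightly stronger statement. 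Your $\sqrt{2},\sqrt{3},\sqrt{6}$ remark is also well taken: iterating the two-factor product construction over $k\geq 3$ generators requires the partial compositum $L_1\cdots L_i$ to be strongly disjoint from $L_{i+1}$, which pairwise strong disjointness alone does not guarantee, whereas your descent to $M$ is indifferent to any dependence among the generators.
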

\begin{proof}
Let us call $L_i=K(\alpha_i)$ and $n_i=[L_i:K]$ for every $1\leq i\leq k$. Let us fix some such an $i$. By \cite[Lemma 3.5]{barreramora1999} there is some $m$ such that $\zeta_n^m\alpha_i^{n_i}\in K$. Since $\alpha_i^{n_i}\in L$ and $\zeta_n^m\in M$ with $L$ and $M$ $K$-linearly disjoint, necessarily $\alpha_i^{n_i}\in K$. Moreover, it is immediate that no smaller power of $\alpha_i$ belongs to $K$. In addition to this, we have that $M_i\coloneqq K(\zeta_{n_i})\subseteq M$ and $L\cap M=K$, whence $L_i\cap M_i=K$. Therefore, we can apply Theorem \ref{thm:charactpure}, which gives that $L_i/K$ is an almost cyclic extension with complement $M_i$ and $\alpha_i$ is an $H_i$-eigenvector, where $H_i$ is the almost classically Galois structure on $L_i/K$ corresponding to $M_i$. On the other hand, since $n$ is the minimal integer such that $\alpha_i^n\in K$ for all $1\leq i\leq k$, we have that $n$ is the least common multiple of $n_1,\dots,n_k$. Arguing as in the proof of \eqref{charactkummereig2} implies \eqref{charactkummereig1} in Proposition \ref{charactkummergalois}, we prove that $\zeta_n\in M$, so $M=\prod_{i=1}^kM_i$. In particular, $\widetilde{L}=LM$. By the assumption, $L_i/K$ and $L_j/K$ are strongly disjoint whenever $i\neq j$. Applying repeatedly Propositions \ref{pro:compalmostclassical} and \ref{pro:prodalmostclassical} and Lemma \ref{lem:prodeighopf}, we obtain that $L/K$ is almost classically Galois with complement $M$ and $\{\alpha_1,\dots,\alpha_k\}$ is a generating system of $H$-eigenvectors for $L/K$. Finally, by applying repeatedly Lemma \ref{lema:embedding}, the group $J=\mathrm{Gal}(\widetilde{L}/K)$ is isomorphic to the direct product of the groups $J_i=\mathrm{Gal}(\widetilde{L_i}/K)$, and hence abelian of exponent $n$. Therefore, the extension $\widetilde{L}/M$ is Kummer, so $L/K$ is almost Kummer.
\end{proof}

Theorem \ref{maintheorem1} follows immediately from Propositions \ref{pro:kummerroot} and \ref{pro:radicalisalmostkummer}. 

If in the statement of Theorem \ref{maintheorem1} we choose $k=1$, we recover Theorem \ref{thm:charactpure}. On the other hand, if we assume that $\zeta_n\in K$, then $M=K$ and $L\cap M=K$, so the strong disjointness condition translates just to linear disjointness, while \eqref{mainthm11} is just that $L/K$ is Kummer with exponent $n$. Moreover, \eqref{mainthm12} means that $L/K$ is almost classically Galois with some $K$-basis of $G$-eigenvectors, so we recover Theorem \ref{charactkummereig}.

Next, we discuss how to extend the correspondence on Corollary \ref{coro:corrkummerHcyclic} to an injective correspondence from a subset of almost Kummer extensions of $K$ to finitely generated subgroups of $K^*/(K*)^n$. First of all, an $n$-radical extension $L=K(\sqrt[n]{a_1},\dots,\sqrt[n]{a_k})$ gives rise to a finitely generated subgroup $\langle a_1,\dots,a_k\rangle$ of $K^*/(K^*)^n$. But Theorem \ref{maintheorem1} only applies to strongly decomposable extensions, so we would need that $K(\sqrt[n]{a_i})$ and $K(\sqrt[n]{a_j})$ are strongly disjoint when $i\neq j$.
Moreover, if $L\cap K(\zeta_n)=K$, $L/K$ is a strongly decomposable almost Kummer extension that is $H$-Kummer, where $H$ corresponds to $K(\zeta_n)$. We obtain:

\begin{coro}\label{coro:corrkummerhopf} Let $n\in\mathbb{Z}_{>0}$ and let $K$ be a field with characteristic coprime to $n$. There is an injective correspondence from the $K$-isomorphism classes of strongly decomposable almost Kummer extensions $L/K$ of exponent $n$ that are $H$-Kummer, where $H$ is the almost classically Galois structure on $L/K$ corresponding to $K(\zeta_n)$, to the finitely generated subgroups of $K^*/(K^*)^n$. Moreover, a finitely generated subgroup $B$ of $K^*/(K^*)^n$ is in the image of this correspondence if and only if $K(\sqrt[n]{B})$ is strongly decomposable and $K(\sqrt[n]{B})\cap K(\zeta_n)=K$.
\end{coro}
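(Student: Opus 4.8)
The plan is to reduce the whole statement to classical Kummer theory over $M=K(\zeta_n)$ by passing to normal closures, using Theorem \ref{maintheorem1} as the dictionary between the Hopf-Galois side and the radical side. First I would invoke the last sentence of Theorem \ref{maintheorem1} to rewrite the domain: the $K$-isomorphism classes of strongly decomposable almost Kummer $H$-Kummer extensions of exponent $n$ (with $H$ corresponding to $M$) are precisely the $K$-isomorphism classes of strongly decomposable $n$-radical extensions $L$ with $L\cap M=K$. For such an $L=K(\sqrt[n]{a_1},\dots,\sqrt[n]{a_k})$ I would define the assignment $L\mapsto B=\langle a_1,\dots,a_k\rangle\subseteq K^*/(K^*)^n$, exactly as in Corollary \ref{corocorrgalois} and extending Corollary \ref{coro:corrkummerHcyclic} from the simple radical ($k=1$) case.

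The first technical point is well-definedness, i.e. that $B$ is intrinsic to $L$: I would prove $B=\{\,\beta^n(K^*)^n\mid\beta\in L,\ \beta^n\in K\,\}$. The inclusion $\supseteq$ is clear. For $\subseteq$, I pass to the normal closure $\widetilde{L}=LM=M(\sqrt[n]{B})$, which is a genuine Kummer extension of $M$ since $\zeta_n\in M$; classical Kummer theory (Corollary \ref{corocorrgalois} applied over $M$) then gives that any $\beta\in L$ with $\beta^n\in K$ satisfies $\beta^n\equiv\gamma^n\pmod{(M^*)^n}$ for some monomial $\gamma=\prod(\sqrt[n]{a_i})^{e_i}\in L$. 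Writing $\beta=\zeta_n^s\gamma d$ with $d\in M$, the quotient $\beta/\gamma=\zeta_n^s d$ lies in $L\cap M=K$, whence $\beta^n\equiv\gamma^n\pmod{(K^*)^n}$ and $\beta^n(K^*)^n\in B$. The same computation shows the natural map $B\to M^*/(M^*)^n$ is injective onto the subgroup $\overline{B}$; this is exactly where $L\cap M=K$ is indispensable, since for $\zeta_n\notin K$ the subgroup $(M^*)^n$ may meet $K^*$ in classes strictly larger than $(K^*)^n$.

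For injectivity I would argue that $B$ determines $L$ up to $K$-isomorphism. If $L$ and $L'$ both map to $B$, then by classical Kummer theory over $M$ they have the same normal closure $\widetilde{L}=M(\sqrt[n]{\overline{B}})=\widetilde{L'}$. Using the intrinsic description of $B$, for each generator $a_i$ there is $\beta_i'\in L'$ with $(\beta_i')^n=a_i$; a degree count via $[L':K]=[\widetilde{L'}:M]=[\widetilde{L}:M]=[L:K]$, valid by linear disjointness and Proposition \ref{pro:lindisjcharact}, shows these radicals already generate $L'$, and similarly $L=K(\beta_1,\dots,\beta_k)$ with $\beta_i^n=a_i$. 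Then $\beta_i'=\zeta_n^{c_i}\beta_i$ inside $\widetilde{L}$, and the element $\tau\in\mathrm{Gal}(\widetilde{L}/M)$ determined by $\beta_i\mapsto\zeta_n^{c_i}\beta_i$ (a standard Kummer automorphism, as it respects the defining relations) fixes $K\subseteq M$ and satisfies $\tau(L)=K(\beta_1',\dots,\beta_k')=L'$, so $\tau$ restricts to a $K$-isomorphism $L\to L'$. The image characterization is then read off directly from Theorem \ref{maintheorem1}: a finitely generated $B$ lies in the image precisely when $K(\sqrt[n]{B})$ is strongly decomposable and $K(\sqrt[n]{B})\cap M=K$, since under exactly these hypotheses the implication \eqref{mainthm11} $\Rightarrow$ \eqref{mainthm12} of Theorem \ref{maintheorem1} guarantees that $K(\sqrt[n]{B})$ is an almost Kummer $H$-Kummer extension of the required type mapping back to $B$, while the reverse direction forces any $B$ in the image to satisfy these conditions.

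I expect the main obstacle to be the injectivity, and more precisely the reconstruction of $L$ up to $K$-isomorphism rather than the mere recovery of its normal closure: the field $\widetilde{L}$ only pins $L$ down as the fixed field of some complement of $\mathrm{Gal}(\widetilde{L}/M)$ in $\mathrm{Gal}(\widetilde{L}/K)$, and such complements need not be conjugate in general, so one cannot simply quote a uniqueness-of-complement statement. The argument must instead exploit the radical structure to produce the isomorphism $\tau$ explicitly inside $\mathrm{Gal}(\widetilde{L}/M)$. The companion subtlety — controlling the map $K^*/(K^*)^n\to M^*/(M^*)^n$ through the hypothesis $L\cap M=K$, rather than through the false identity $K^*\cap(M^*)^n=(K^*)^n$ — is the other place that demands genuine care.
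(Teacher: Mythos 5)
Your overall strategy is the same as the paper's: reduce to radical extensions via Theorem \ref{maintheorem1}, send $L$ to $\langle a_1,\dots,a_k\rangle\subseteq K^*/(K^*)^n$, and control everything through the genuine Kummer extension $\widetilde{L}/M$. The paper offers little beyond the paragraph preceding the corollary, so your well-definedness argument (the intrinsic description $B=\{\beta^n(K^*)^n\mid\beta\in L,\ \beta^n\in K\}$, obtained from $\beta/\gamma\in L\cap M=K$) is a genuine addition and is correct, as is your reading of the image characterization off Theorem \ref{maintheorem1}.

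The gap is in the injectivity step, exactly where you predicted trouble. The automorphism $\tau\in\mathrm{Gal}(\widetilde{L}/M)$ with $\tau(\beta_i)=\zeta_n^{c_i}\beta_i$ exists if and only if $a_i\mapsto\zeta_n^{c_i}$ kills every relation in $\overline{B}\subseteq M^*/(M^*)^n$; a counting argument ($[\widetilde{L}:M]=\prod_i n_i$) shows $\overline{B}=\bigoplus_i\langle\overline{a_i}\rangle$ with $\overline{a_i}$ of order $n_i=[K(\beta_i):K]$, so the condition is $\zeta_n^{c_in_i}=1$, i.e. $\beta_i^{n_i}=(\beta_i')^{n_i}$. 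Both of these lie in $K$ (by your own computation), but their ratio is only constrained to lie in $\mu_{n/n_i}\cap K$, which contains $-1$ whenever $n/n_i$ is even; so "it respects the defining relations" fails in general. Moreover the obstruction is not an artifact of your construction: take $K=\mathbb{Q}$, $n=4$, $L=\mathbb{Q}(\sqrt{2},\sqrt[4]{3})$ and $L'=\mathbb{Q}(\sqrt{-2},\sqrt[4]{3})$. Both satisfy all hypotheses of Theorem \ref{maintheorem1} with complement $\mathbb{Q}(i)$, both determine $\langle 4,3\rangle$ in $\mathbb{Q}^*/(\mathbb{Q}^*)^4$ since $(\sqrt{2})^4=(\sqrt{-2})^4=4$, yet $L\not\cong_{\mathbb{Q}}L'$ because $L$ is real while $L'$ contains $\mathbb{Q}(\sqrt{-2})$. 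Hence no argument can establish injectivity of $L\mapsto\langle a_1,\dots,a_k\rangle$ on the stated domain: the root cause is that when $x^n-a_i$ is reducible over $K$ (which Theorem \ref{maintheorem1} permits, only $\mathrm{lcm}(n_i)=n$ being required), the label $K(\sqrt[n]{a_i})$ does not even determine a $K$-isomorphism class, so Lemma \ref{lemmadiffhopf} is unavailable. To complete the proof one must add a hypothesis (for instance $\mu_n\cap K=\{1\}$, or that each $a_i$ has exact order $n$ in $K^*/(K^*)^n$, which is what makes the simple radical case of Corollary \ref{coro:corrkummerHcyclic} work) or replace the invariant $\langle a_1,\dots,a_k\rangle$ by one remembering the classes $\beta_i^{n_i}$ modulo $(K^*)^{n_i}$. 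Your closing remark that reconstructing $L$ from $\widetilde{L}$ is the delicate point is exactly right: the complements of $J$ in $G$ are genuinely not conjugate here, and the radical structure alone does not rigidify them.
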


Equivalently, the assignment $B\mapsto K(\sqrt[n]{B})$ defines a bijective correspondence between:
\begin{enumerate}
    \item\label{corrhopf1} The finitely generated subgroups $B$ such that $K(\sqrt[n]{B})/K$ is strongly decomposable and $K(\sqrt[n]{B})\cap K(\zeta_n)=K$.
    \item\label{corrhopf2} The $K$-isomorphism classes of strongly decomposable almost Kummer extensions $L/K$ of exponent $n$ that are $H$-Kummer.
\end{enumerate} 

This is, thus, a generalization of the main theorem of Kummer theory. Indeed, let us assume that $\zeta_n\in K$. Take a subgroup $B$ as in \eqref{corrhopf1}. Then our assumption gives that $K(\sqrt[n]{B})/K$ is Galois. Moreover, from Proposition \ref{mordell1950th}, its simple radical subextensions are pairwise linearly disjoint. Then, they are pairwise strongly disjoint and $K(\sqrt[n]{B})/K$ is strongly decomposable. On the other hand, the assumption that $\zeta_n\in K$ also gives that the extensions as in \eqref{corrhopf2} are just the abelian extensions of $K$ with exponent $n$. Thus, this is just the correspondence in Corollary \ref{corocorrgalois}.

Finally, let us discuss the applicability of Theorem \ref{maintheorem1} to classes of extensions $L/K$. First of all, we need that any pair of intermediate fields of $L/K$ are $K$-linearly disjoint. To this end, we can use Proposition \ref{mordell1950th}. For $k=2$, this says that two simple radical extensions $L_1/K$, $L_2/K$ of degrees $n_1$, $n_2$ respectively are linearly disjoint if $K$ is totally real or $\zeta_{n_1},\zeta_{n_2}\in K$. This last situation does not represent any improvement with respect to Galois theory, as they imply that the extension $L=K(\alpha_1,\alpha_2)$ is Galois over $K$ and therefore is in the conditions of Theorem \ref{charactkummereig}.

Moreover, we also need to restrict to the simple radical extensions $L/K$ such that $L\cap K(\zeta_n)=K$. The following result gives a sufficient condition for the case $K=\mathbb{Q}$:

\begin{lema}\label{lem:inters} Let $n\in\mathbb{Z}_{>0}$. Then every radical extension $L/\mathbb{Q}$ of odd degree satisfies $L\cap\mathbb{Q}(\zeta_n)=\mathbb{Q}$.
\end{lema}
\begin{proof}
Let $m$ be an odd number and let $L=\mathbb{Q}(\sqrt[m]{a})$ with $a\in\mathbb{Q}^*/(\mathbb{Q}^*)^m$ of order $m$. Let $x\in L\cap\mathbb{Q}(\zeta_n)$. Then $\mathbb{Q}(x)$ is a subfield of $L$, so $\mathbb{Q}(x)=\mathbb{Q}(\sqrt[d]{a})$ with $d\mid m$. Moreover, this is also a subfield of $\mathbb{Q}(\zeta_n)$. Since all subextensions of a cyclotomic field are Galois, in particular so is $\mathbb{Q}(\sqrt[d]{a})/\mathbb{Q}$, and hence $d\in\{1,2\}$. Now, since $m$ is odd, it must be $d\neq2$, whence $d=1$. That is, $x\in\mathbb{Q}$ and the statement holds.
\end{proof}

\begin{rmk}\normalfont Lemma \ref{lem:inters} does not necessarily hold for extensions of $K$ with even degree. Indeed, the degree $8$ simple radical extension $\mathbb{Q}(\sqrt[8]{2})/\mathbb{Q}$ satisfies $\mathbb{Q}(\sqrt[8]{2})\cap\mathbb{Q}(\zeta_8)=\mathbb{Q}(\sqrt{2})$, since $\mathbb{Q}(\zeta_8)=\mathbb{Q}(\sqrt{2},\sqrt{-1})$.
\end{rmk}

We can use Lemma \ref{lem:inters} to find sufficient conditions for strong disjointness.

\begin{coro} Two linearly disjoint simple radical extensions $L_1/\mathbb{Q}$ and $L_2/\mathbb{Q}$ of odd degrees are strongly disjoint.
\end{coro}
\begin{proof}
Call $n_i=[L_i:K]$ for $i\in\{1,2\}$. Since $\mathbb{Q}$ is totally real, Proposition \ref{mordell1950th} gives that $L_1$ and $L_2$ are $\mathbb{Q}$-linearly disjoint. On the other hand, by Lemma \ref{lem:inters}, we have $L_i\cap \mathbb{Q}(\zeta_{n_j})=\mathbb{Q}$ for every $1\leq i,j\leq 2$. Choosing $i=j\in\{1,2\}$, we obtain that $L_i/\mathbb{Q}$ is almost classically Galois with complement $M_i=\mathbb{Q}(\zeta_{n_i})$. Finally, for $i\neq j$, we deduce that $L_1/\mathbb{Q}$ and $L_2/\mathbb{Q}$ are strongly disjoint.
\end{proof}

\section{Module structure of the ring of integers}\label{sect:modstr}

From now on, we work with the following setting: $K$ will be the fraction field of a Dedekind domain $\mathcal{O}_K$, $L$ will be an $H$-Galois extension of $K$ and $\mathcal{O}_L$ will be the integral closure of $\mathcal{O}_K$ in $L$. We also assume that $\mathcal{O}_L$ is $\mathcal{O}_K$-free. This is not always the case, see for example \cite{mackenziescheuneman}. However, it is implied for instance under the condition that $\mathcal{O}_K$ is a PID. This includes extensions of $p$-adic fields and many extensions of number fields. For the latter ones, $\mathcal{O}_K$ is a PID if and only if $K$ has class number one. In the case that $L/K$ is $H$-Kummer, we are interested in finding an $\mathcal{O}_K$-basis of $\mathfrak{A}_H$ and studying the freeness of $\mathcal{O}_L$ as module over the associated order $\mathfrak{A}_H$. 

\begin{defi}\label{defi:matrixeig} Let $L/K$ be an $H$-Galois extension and fix a $K$-basis $W=\{w_i\}_{i=1}^n$ of $H$. Assume that $B=\{\gamma_j\}_{j=1}^n$ is a $K$-basis of $H$-eigenvectors, so that for each $1\leq i,j\leq n$ there is a unique $\lambda_{ij}\in K$ such that $w_i\cdot\gamma_j=\lambda_{ij}\gamma_j$. The matrix $\Lambda_W=(\lambda_{ji})_{i,j=1}^n$ is called the \textbf{matrix of $H$-eigenvalues} with respect to $W$.
\end{defi}

It is easily checked that the matrix of eigenvalues $\Lambda_W$ is obtained from removing the zero rows of the matrix $M(H_W,L_B)$ introduced in Section \ref{sect:hgmodtheory}, where $B$ is a $K$-basis of $H$-eigenvectors for $L/K$. In fact, it is this circumstance that leads us to define the matrix of $H$-eigenvalues as in Definition \ref{defi:matrixeig}, instead of $(\lambda_{ij})_{i,j=1}^n$. We can describe completely the effect of the change of basis of $H$ on the matrix of $H$-eigenvalues as follows: if $W'=\{w_i'\}_{i=1}^n$ is another $K$-basis of $H$, then $\Lambda_{W'}=\Lambda_W P_W^{W'}$, where $P_W^{W'}$ is the matrix whose columns are the coordinates of the elements of $W'$ with respect to $W$. 

Due to the simplicity of the action, following the idea of the method in Section \ref{sect:hgmodtheory}, we can prove Theorem \ref{maintheorem2}.

\begin{proof} \textit{(of Theorem \ref{maintheorem2})}
Let $W=\{w_i\}_{i=1}^n$ be any $K$-basis of $H$, and write $w_i\cdot\gamma_j=\lambda_{ij}\gamma_j$, $\lambda_{ij}\in K$, for every $1\leq i,j\leq n$, so that $\Lambda_W=\Lambda=(\lambda_{ji})_{i,j=1}^n$.
\begin{itemize}
    \item[1.] Let us call $V=\{v_i\}_{i=1}^n$. Clearly, since $W$ is a $K$-basis of $H$, so is $V$. Let $P_W^V$ be the change basis matrix whose columns are the coordinates of elements of $V$ with respect to $W$. By definition of the elements $v_i$ we have that $P_W^V=\Omega$, so $\Lambda_V=\Lambda\Omega=\mathrm{Id}_n$. This means that $v_i\cdot\gamma_j=\delta_{ij}\gamma_j$ for every $1\leq i,j\leq n$.
    Since $B$ is an integral basis for $L/K$, $v_i\in\mathfrak{A}_H$ for every $1\leq i\leq n$. Let us check that the elements $v_i$ form a $K$-basis of $L$. For a given $h\in H$, write $h=\sum_{i=1}^nh_iv_i$, $h_i\in K$. Then $h\cdot\gamma_j=\sum_{i=1}^nh_i\delta_{ij}\gamma_j=h_j\gamma_j$. Now, we have that \begin{equation*}
        \begin{split}
            h\in\mathfrak{A}_H\Longleftrightarrow&h\cdot x\in\mathcal{O}_L\hbox{ for all }x\in\mathcal{O}_L ,
            \\\Longleftrightarrow& h\cdot\gamma_j\in\mathcal{O}_L\hbox{ for all }1\leq j\leq n,
            \\\Longleftrightarrow& h_j\gamma_j\in\mathcal{O}_L\hbox{ for all }1\leq j\leq n,
            \\\Longleftrightarrow& h_j\in\mathcal{O}_L\cap K=\mathcal{O}_K\hbox{ for all }1\leq j\leq n.
        \end{split}
    \end{equation*} Hence $\{v_i\}_{i=1}^n$ is an $\mathcal{O}_K$-basis of $\mathfrak{A}_H$. Now, we check that these are pairwise orthogonal idempotents (and since $V=\{v_i\}_{i=1}^n$ is a basis we will then obtain that it is a primitive system of idempotents). Let $1\leq i,j\leq n$. Then, for every $1\leq k\leq n$, $$(v_iv_j)\cdot\gamma_k=v_i\cdot(v_j\cdot\gamma_k)=v_i\cdot(\delta_{jk}\gamma_k)=\delta_{jk}v_i\cdot\gamma_k=\delta_{ik}\delta_{jk}\gamma_k.$$ If $i=j$, this says that $v_i^2\cdot\gamma_k=\delta_{ik}\gamma_k=v_i\cdot\gamma_k$ for every $k$, and since $\rho_H$ is injective (because $L/K$ is $H$-Galois), $v_i^2=v_i$. Otherwise, if $i\neq j$, $(v_iv_j)\cdot\gamma_k=0$ for every $k$, so again the injectivity of $\rho_H$ gives that $v_iv_j=0$.
    \item[2.] Let $\beta=\sum_{j=1}^n\beta_j\gamma_j\in\mathcal{O}_L$. Since the elements $v_i$ form an $\mathcal{O}_K$-basis of $\mathfrak{A}_H$, $\beta$ is an $\mathfrak{A}_H$-free generator of $\mathcal{O}_L$ if and only if $\{v_i\cdot\beta\}_{i=1}^n$ is an $\mathcal{O}_K$-basis of $\mathcal{O}_L$. Now, we have that $$v_i\cdot\beta=\sum_{j=1}^n\beta_j\delta_{ij}\gamma_j=\beta_i\gamma_i$$ for every $1\leq i\leq n$. Since the elements $\gamma_i$ form an $\mathcal{O}_K$-basis of $\mathcal{O}_L$, $\{v_i\cdot\beta\}_{i=1}^n$ is an $\mathcal{O}_K$-basis of $\mathcal{O}_L$ if and only if $\beta_i\in\mathcal{O}_K^*$ for every $1\leq i\leq n$. Hence any $\mathfrak{A}_H$-free generator of $\mathcal{O}_L$ is an element $\beta=\sum_{j=1}^n\beta_j\gamma_j$ such that $\beta_j\in\mathcal{O}_K^*$ for every $1\leq j\leq n$. In particular, $\mathcal{O}_L$ is $\mathfrak{A}_H$-free.
\end{itemize}
\end{proof}

\begin{rmk}\normalfont If $L/K$ is $H$-Kummer but does not admit any integral basis of $H$-eigenvectors, $\mathcal{O}_L$ is not necessarily $\mathfrak{A}_H$-free. For instance, if $L/K$ is a cyclic degree $p$ extension of $p$-adic fields and $\mathfrak{A}_{L/K}$ is the associated order in its classical Galois structure, it is known that $\mathcal{O}_L$ is not in general $\mathfrak{A}_{L/K}$-free (in fact, complete criteria for the $\mathfrak{A}_{L/K}$-freeness is known, see \cite{bertfert,bertbertfert}), while $L/K$ is Kummer.
\end{rmk}

As a first application of Theorem \ref{maintheorem2}, we can find a sufficient condition for the freeness of the ring of integers in almost cyclic extensions of $K$ as in Theorem \ref{thm:charactpure}.

\begin{pro}\label{pro:freealmostcyclic} Let $K$ be the fraction field of a Dedekind domain $\mathcal{O}_K$. Let $L=K(\alpha)$ with $\alpha\in L$ such that $\alpha^n\in K$ and no smaller power of $\alpha$ belongs to $K$. Assume that $L\cap K(\zeta_n)=K$ and that $\mathcal{O}_L=\mathcal{O}_K[\alpha]$ (in particular, $\mathcal{O}_L$ is $\mathcal{O}_K$-free). Then $\mathcal{O}_L$ is $\mathfrak{A}_H$-free, where $H$ is the almost classical Galois structure corresponding to $K(\zeta_n)$.
\end{pro}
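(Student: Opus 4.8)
The plan is to reduce the statement to a direct application of Theorem \ref{maintheorem2}. First I would invoke Theorem \ref{thm:charactpure}: since $L\cap K(\zeta_n)=K$, $\alpha^n\in K$ and $n$ is minimal with this property, that theorem gives that $L/K$ is a degree $n$ almost cyclic extension with Galois complement $M=K(\zeta_n)$, and that $\alpha$ is an $H$-eigenvector of $L$ for the almost classically Galois structure $H$ corresponding to $M$. This simultaneously pins down the Hopf-Galois structure $H$ appearing in the statement and shows that $L/K$ is $H$-cyclic, hence in particular $H$-Kummer.

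Next I would produce the basis of $H$-eigenvectors that is simultaneously an integral basis. Since $\alpha$ is a primitive element of $L/K$ that is an $H$-eigenvector, the induction on Proposition \ref{pro:prodeig} carried out in the proof of Corollary \ref{coro:primitiveeigbasis} shows that each power $\alpha^i$ is again an $H$-eigenvector, so $B=\{1,\alpha,\dots,\alpha^{n-1}\}$ is a $K$-basis of $H$-eigenvectors of $L$. The hypothesis $\mathcal{O}_L=\mathcal{O}_K[\alpha]$ says precisely that this same set $B$ is an $\mathcal{O}_K$-basis of $\mathcal{O}_L$. Thus $B$ is at once a basis of $H$-eigenvectors and an integral basis, which is exactly the compatibility required to enter Theorem \ref{maintheorem2}.

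With this in hand I would apply Theorem \ref{maintheorem2}, noting that $L/K$ has associated rings of integers because $K$ is the fraction field of a Dedekind domain and $\mathcal{O}_L$ is $\mathcal{O}_K$-free by hypothesis. Part 2 of that theorem then yields that $\mathcal{O}_L$ is $\mathfrak{A}_H$-free, and in fact identifies the free generators explicitly as those $\beta=\sum_{j=1}^n\beta_j\alpha^{j-1}$ with every $\beta_j\in\mathcal{O}_K^*$ (for instance $\beta=1+\alpha+\dots+\alpha^{n-1}$).

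I do not expect a genuine obstacle here: the argument is essentially bookkeeping that verifies the hypotheses of the two invoked results align. The only point warranting a moment's care is recognizing that the eigenvector basis built from powers of $\alpha$ coincides with the integral basis furnished by $\mathcal{O}_L=\mathcal{O}_K[\alpha]$; since both are literally $\{1,\alpha,\dots,\alpha^{n-1}\}$ this is immediate, and it is precisely this coincidence that bridges the Kummer-theoretic structure of Theorem \ref{thm:charactpure} with the module-theoretic conclusion of Theorem \ref{maintheorem2}.
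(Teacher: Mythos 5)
Your argument is correct and follows exactly the paper's own proof: invoke Theorem \ref{thm:charactpure} to get the almost cyclic structure and that $\alpha$ is an $H$-eigenvector, use Corollary \ref{coro:primitiveeigbasis} to turn the powers of $\alpha$ into a $K$-basis of $H$-eigenvectors which by $\mathcal{O}_L=\mathcal{O}_K[\alpha]$ is also integral, then apply Theorem \ref{maintheorem2}. The explicit generator $1+\alpha+\dots+\alpha^{n-1}$ is a nice addition but nothing in the route differs from the paper.
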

\begin{proof}
Since $L\cap K(\zeta_n)=K$, from Theorem \ref{thm:charactpure} we obtain that $L/K$ is almost cyclic with Galois complement $K(\zeta_n)$ and that $\alpha$ is an $H$-eigenvector, where $H$ is the almost classically Galois structure corresponding to $K(\zeta_n)$. Now, from Corollary \ref{coro:primitiveeigbasis} the powers of $\alpha$ are also $H$-eigenvectors; in particular $\{1,\alpha,\dots,\alpha^{n-1}\}$ is a $K$-basis of $H$-eigenvectors of $L$. Moreover, the condition that $\mathcal{O}_L=\mathcal{O}_K[\alpha]$ ensures that this is also an $\mathcal{O}_K$-basis of $\mathcal{O}_L$. Hence, the statement follows by applying Theorem \ref{maintheorem2}.
\end{proof}

The condition that $L\cap K(\zeta_n)=K$ is in particular satisfied when $\zeta_n\in K$, which corresponds to classical Kummer extensions. In that case, Proposition \ref{pro:freealmostcyclic} becomes:

\begin{coro}\label{coro:freecyclic} Let $K$ be the fraction field of a Dedekind domain $\mathcal{O}_K$. Let $L=K(\alpha)$ with $\alpha\in L$ such that $\alpha^n\in K$ and no smaller power of $\alpha$ belongs to $K$. Assume that $\zeta_n\in K$ and that $\mathcal{O}_L=\mathcal{O}_K[\alpha]$. Then $\mathcal{O}_L$ is $\mathfrak{A}_{L/K}$-free.
\end{coro}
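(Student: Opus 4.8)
The plan is to obtain this as a direct specialization of Proposition \ref{pro:freealmostcyclic} to the case $\zeta_n\in K$. The first observation is that the hypothesis $\zeta_n\in K$ forces $K(\zeta_n)=K$, so the linear disjointness condition $L\cap K(\zeta_n)=K$ required by Proposition \ref{pro:freealmostcyclic} is automatically satisfied (it reads $L\cap K=K$). Since the remaining hypotheses of Proposition \ref{pro:freealmostcyclic}, namely $\alpha^n\in K$ with no smaller power of $\alpha$ in $K$ and $\mathcal{O}_L=\mathcal{O}_K[\alpha]$, coincide verbatim with those assumed here, the proposition applies and gives that $\mathcal{O}_L$ is $\mathfrak{A}_H$-free, where $H$ is the almost classically Galois structure corresponding to $K(\zeta_n)=K$.

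The only substantive point is then to identify this $H$ with the classical Galois structure, so that $\mathfrak{A}_H=\mathfrak{A}_{L/K}$. For this I would first note that, because $\zeta_n\in K$, all conjugates $\zeta_n^i\alpha$ of $\alpha$ lie in $L$, so $L/K$ is Galois; in fact it is cyclic of degree $n$ by Proposition \ref{pro:firstcharactcyclic}. Viewing a Galois extension as almost classically Galois, its normal complement is the full Galois group and its Galois complement is $K$ itself, consistent with $K(\zeta_n)=K$. As recorded at the end of Section \ref{sect:almostclassic}, the almost classically Galois structure on a Galois extension corresponding to its Galois complement is precisely its classical Galois structure. Hence $H$ is the classical Galois structure on $L/K$, and by the notational convention of Section \ref{sect:hgmodtheory} its associated order is exactly $\mathfrak{A}_{L/K}$.

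Combining these two steps, the $\mathfrak{A}_H$-freeness delivered by Proposition \ref{pro:freealmostcyclic} is literally the $\mathfrak{A}_{L/K}$-freeness of $\mathcal{O}_L$, completing the argument. There is essentially no obstacle here: the statement is a clean corollary, and the only place requiring a moment of care is the identification $\mathfrak{A}_H=\mathfrak{A}_{L/K}$, which rests entirely on recognizing that the structure attached to the trivial Galois complement is the classical one. No further computation is needed, and in particular no appeal to Theorem \ref{maintheorem2} beyond what already went into Proposition \ref{pro:freealmostcyclic}.
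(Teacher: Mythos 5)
Your proposal is correct and follows exactly the paper's route: the paper obtains this corollary by specializing Proposition \ref{pro:freealmostcyclic} to the case $\zeta_n\in K$, where $K(\zeta_n)=K$ makes the disjointness hypothesis vacuous and, as noted at the end of Section \ref{sect:almostclassic}, the almost classically Galois structure corresponding to the trivial complement is the classical Galois structure, so $\mathfrak{A}_H=\mathfrak{A}_{L/K}$. Your write-up simply makes explicit the identification that the paper leaves implicit.
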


We know that $[K(\zeta_n):K]$ is a divisor of $\varphi(n)$, where $\varphi$ is the Euler totient function. Hence, when $n$ is a Burnside number, by definition $n$ and $\varphi(n)$ are coprime, and $L\cap K(\zeta_n)=K$. In that case, we know by Byott uniqueness theorem \cite[Theorem 2]{byottuniqueness} that the almost classically Galois structure corresponding to $K(\zeta_n)$ is the unique Hopf-Galois structure $H$ on $L/K$. These considerations lead to the following.

\begin{coro} Let $K$ be the fraction field of a Dedekind domain $\mathcal{O}_K$. Let $L=K(\alpha)$ with $\alpha\in L$ such that $\alpha^n\in K$ and no smaller power of $\alpha$ belongs to $K$. Assume that $n$ is a Burnside number and that $\mathcal{O}_L=\mathcal{O}_K[\alpha]$, and let $H$ be the unique Hopf-Galois structure on $L/K$. Then $\mathcal{O}_L$ is $\mathfrak{A}_H$-free.
\end{coro}

Next, we would like to obtain an analogue of Proposition \ref{pro:freealmostcyclic} for radical extensions, which by definition are a compositum of simple radical extensions. In addition to the restrictions in Theorem \ref{maintheorem1}, we will assume that the simple radical extensions involved are arithmetically disjoint, so that the freeness property lifts to the almost classically Galois structure on the radical extension. This means that if $L_1,\dots,L_k$ are simple radical extensions of $K$ such that $\mathcal{O}_{L_i}$ is $\mathfrak{A}_{H_i}$-free for $i\in\{1,\dots,k\}$, then $\mathcal{O}_{L_1\dots L_k}$ is $\mathfrak{A}_H$-free, where $H$ is the product Hopf-Galois structure on $L_1\dots L_k$ from $H_1,\dots,H_k$ as defined in Definition \ref{defi:prodhgstr}.

\begin{pro}\label{pro:freeraddisj} Let $K$ be the fraction field of a PID $\mathcal{O}_K$ such that $\mathrm{char}(K)$ is coprime to $n$ and let $L=K(\sqrt[n_1]{a_1},\dots,\sqrt[n_k]{a_k})$, where $a_1,\dots,a_k\in K^*$. Call $L_i=K(\sqrt[n_i]{a_i})$ for every $1\leq i\leq k$ and assume that:
\begin{enumerate}
    \item\label{freeraddisj1} $L_i\cap K(\zeta_{n_i})=K$ for every $1\leq i\leq k$ (so that $L_i/K$ is almost classically Galois).
    \item\label{freeraddisj2} $\gcd(a_1n_1,\dots,a_kn_k)=1$, where the greatest common divisor is taken in $\mathcal{O}_K$.
    \item\label{freeraddisj3} $\mathcal{O}_{L_i}=\mathcal{O}_K[\sqrt[n_i]{a_i}]$ for every $1\leq i\leq k$.
\end{enumerate}
Let $H_i$ be the almost classically Galois structure on $L_i/K$ corresponding to $K(\zeta_{n_i})$. We know from Theorem \ref{maintheorem1} that $L/K$ is almost Kummer with complement $K(\zeta_n)$ where $n=\mathrm{lcm}(n_1,\dots,n_k)$; let $H$ be the almost classically Galois structure on $L/K$ corresponding to $K(\zeta_n)$. Then $\mathfrak{A}_H=\bigotimes_{i=1}^k\mathfrak{A}_{H_i}$ and $\mathcal{O}_L$ is $\mathfrak{A}_H$-free.
\end{pro}
\begin{proof}
For each $1\leq i\leq k$, let $\alpha_i\in L$ such that $\alpha_i^{n_i}=a_i\in K$, and no smaller power of $\alpha_i$ belongs to $K$. First, note that there is no loss of generality in assuming that $a_1,\dots,a_k$ are algebraic integers, as otherwise we can multiply by the least common multiple of their denominators. Now, we note that the discriminant of a polynomial $x^n-a$ with $a\in K$ is $(-1)^{\binom{n}{2}}n^n(-a)^{n-1}$ (this is a particular case of \cite[Theorem 4]{greenfielddrucker}), and hence the condition \eqref{freeraddisj2} implies that the extensions $L_i/K$ are pairwise arithmetically disjoint. On the other hand, \eqref{freeraddisj2} holds if and only if the extensions $L_i/K$ are pairwise strongly disjoint. Indeed, since $\mathcal{O}_K$ is a PID, there are no non-trivial abelian unramified extensions of $K$ (in other words, the Hilbert Class Field of $K$ is trivial), and hence the extensions $K(\zeta_{n_j})/K$ are ramified at some prime of $K$. Then, when $i\neq j$, $K(\sqrt[n_i]{a_i})\cap K(\zeta_{n_j})=K$ if and only if $K(\sqrt[n_i]{a_i})$ and $K(\zeta_{n_j})$ do not share any ramified prime, which is equivalent to arithmetic disjointness because of the form of their discriminants.

We prove the result by a finite induction on $1\leq i<k$. For $i=1$, the statement is just Proposition \ref{pro:freealmostcyclic}. Now, assume that the statement holds for $L_1,\dots,L_i$, with $1\leq i<k$, and call $\mathcal{L}_i=L_1\dots L_i$. As already seen, the condition \eqref{freeraddisj2} ensures that $\mathcal{L}_i/K$ is strongly decomposable, so it is almost classically Galois with complement $\mathcal{M}_i=K(\zeta_{n_1})\dots K(\zeta_{n_i})=K(\zeta_{N_i})$, where $N_i=\mathrm{lcm}(n_1,\dots n_i)$. Let $\mathcal{H}_i$ be the almost classically Galois structure on $\mathcal{L}_i/K$ corresponding to $\mathcal{M}_i$. From the induction, we have that:
\begin{itemize}
    \item $\mathfrak{A}_{\mathcal{H}_i}=\bigotimes_{l=1}^i\mathfrak{A}_{H_l}$.
    \item $\mathcal{O}_{\mathcal{L}_i}$ is $\mathfrak{A}_{\mathcal{H}_i}$-free.
\end{itemize} By Proposition \ref{pro:prodalmostclassical}, $\mathcal{H}_i$ is the product Hopf-Galois structure of $H_1,\dots,H_i$ on $\mathcal{L}_i/K$. Moreover, we know that $\mathcal{L}_i\cap K(\zeta_{N_i})=K$ with $\alpha_l^{N_i}\in K$ for every $1\leq l\leq i$, and $N_i$ is minimal for that property. From Theorem \ref{maintheorem1}, we obtain that $\mathcal{L}_i/K$ is almost Kummer of exponent $N_i$ with complement $K(\zeta_{\mathrm{lcm}(n_1,\dots n_i)})$, and that elements $\alpha_l$ are $\mathcal{H}_i$-eigenvectors. On the other hand, since $L_{i+1}\cap K(\zeta_{n_{i+1}})=K$, Theorem \ref{thm:charactpure} gives that $L_{i+1}/K$ is almost cyclic with complement $K(\zeta_{n_{i+1}})$, and that an element $\alpha_{i+1}\in L_{i+1}$ such that $n_{i+1}$ is the minimal integer with $\alpha_{i+1}^{n_{i+1}}=a_{i+1}$ is an $H_{i+1}$-eigenvector. Using the hypothesis that $\mathcal{O}_{L_{i+1}}=\mathcal{O}_K[\sqrt[n_{i+1}]{a_{i+1}}]$, Proposition \ref{pro:freealmostcyclic} gives that $\mathcal{O}_{L_{i+1}}$ is $\mathfrak{A}_{H_{i+1}}$-free. 

Let $\mathcal{L}_{i+1}=\mathcal{L}_iL_{i+1}=L_1\dots L_{i+1}$. It follows from \eqref{freeraddisj2} that the extensions $\mathcal{L}_i/K$ and $L_{i+1}/K$ are strongly disjoint. By Proposition \ref{pro:compalmostclassical}, $\mathcal{L}_{i+1}/K$ is almost classically Galois with complement $\mathcal{M}_{i+1}=K(\zeta_{\mathrm{lcm}(n_1,\dots n_{i+1})})$. Hence, Proposition \ref{pro:prodalmostclassical} gives that the almost classically Galois structure $\mathcal{H}_{i+1}$ on $\mathcal{L}_{i+1}/K$ corresponding to $\mathcal{M}_{i+1}$ is just the product Hopf-Galois structure of $\mathcal{H}_i$ and $H_{i+1}$ on $\mathcal{L}_{i+1}/K$. Therefore from Corollary \ref{coro:prodhgmstr} it follows that the statement is satisfied for $L_1,\dots,L_{i+1}$.
\end{proof}

\subsection{The case of number fields}

Let $L/K$ be an $n$-radical extension of number fields such that $L\cap K(\zeta_n)=K$, so that $L/K$ is almost Kummer. Let $H$ be its almost classically Galois structure corresponding to $K(\zeta_n)$. Recall that the number fields $K$ such that $\mathcal{O}_K$ is a PID are just the fields with class number one. In this situation, Proposition \ref{pro:freeraddisj} gives sufficient conditions for $\mathcal{O}_L$ being $\mathfrak{A}_H$-free.

Note that in all this discussion we do not need any assumption on the ramification of $L/K$. In particular, this includes tamely ramified extensions of $K$. However, the existing results in that case are characterizations of the freeness for subclasses of these extensions. Namely, Del Corso and Rossi \cite[Theorem 11]{delcorsorossi2013} characterized the existence of a normal integral basis (in other words, the freeness of the ring of integers over the associated order in the classical Galois structure) for tamely ramified Kummer extensions. Moreover, Truman \cite[Theorem 5.5]{truman2020} characterized the freeness over the associated order in the unique Hopf-Galois structure for tamely ramified simple radical extensions of prime degree. However, Proposition \ref{pro:freeraddisj} involves the freeness of the ring of integers in a broad class of wildly ramified radical extensions of number fields.

Let us derive some interesting particular cases of this one and the other results at Section \ref{sect:modstr}. For a simple radical extension $L=K(\sqrt[n]{a})$, the condition that $\mathcal{O}_L=\mathcal{O}_K[\sqrt[n]{a}]$ means that an element $\alpha\in\mathcal{O}_L$ with $\alpha^n=a$ generates a power integral basis of $L/K$. When $K=\mathbb{Q}$ and $a$ is square-free, Gassert \cite[Theorem 1.1]{gassert} proved that this is equivalent to $a^p\not\equiv a\,(\mathrm{mod}\,p^2)$ for every prime divisor $p$ of $n$. Applying Proposition \ref{pro:freealmostcyclic}, we conclude:

\begin{coro} Let $L=\mathbb{Q}(\sqrt[n]{a})$, with $a\in\mathbb{Z}$ square-free. Assume that $L\cap\mathbb{Q}(\zeta_n)=\mathbb{Q}$ and 
$a^p\not\equiv a\,(\mathrm{mod}\,p^2)$ for every prime divisor $p$ of $n$. Then, $\mathcal{O}_L$ is $\mathfrak{A}_H$-free, where $H$ is the almost classically Galois structure corresponding to $\mathbb{Q}(\zeta_n)$.
\end{coro}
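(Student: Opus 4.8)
The plan is to obtain the statement as a direct specialization of Corollary \ref{coro:freealmostcyclicpart} to the ground field $K=\mathbb{Q}$, so the only real work is to verify the hypotheses of that corollary in the present situation and to translate the monogenity condition into the stated congruence. First I would record that $\mathbb{Q}$ is a totally real number field with $\mathcal{O}_{\mathbb{Q}}=\mathbb{Z}$ a PID, so that the alternative hypothesis ``$K$ is a totally real number field'' of Corollary \ref{coro:freealmostcyclicpart} is met; in particular this already forces $L\cap\mathbb{Q}(\zeta_n)=\mathbb{Q}$ (following the idea in the proof of Proposition \ref{pro:totrealdisj}), so no separate linear disjointness check is needed. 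By the standing convention on the symbol $\sqrt[n]{a}$, the polynomial $x^n-a$ is irreducible over $\mathbb{Q}$; hence for $\alpha=\sqrt[n]{a}$ we have $\alpha^n=a\in\mathbb{Q}$ and, the minimal polynomial having degree $n$, no smaller power of $\alpha$ lies in $\mathbb{Q}$.

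The remaining hypothesis to verify is $\mathcal{O}_L=\mathbb{Z}[\alpha]$, and here I would invoke Gassert's criterion \cite{gassert}. For an $n$-th-power-free integer $a$ (to which we may reduce, since the congruence is an integral condition and multiplying $a$ by an $n$-th power leaves $L$ unchanged), it asserts that $\mathcal{O}_L=\mathbb{Z}[\sqrt[n]{a}]$ holds precisely when $a^p\not\equiv a\pmod{p^2}$ for every prime $p\mid n$. This is exactly our hypothesis, so $\mathcal{O}_L=\mathbb{Z}[\alpha]=\mathcal{O}_K[\alpha]$, and in particular $\mathcal{O}_L$ is $\mathbb{Z}$-free.

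With all hypotheses in place, Corollary \ref{coro:freealmostcyclicpart} applies and yields that $\mathcal{O}_L$ is $\mathfrak{A}_H$-free, where $H$ is the almost classically Galois structure on $L/\mathbb{Q}$ corresponding to $\mathbb{Q}(\zeta_n)$, which is precisely the assertion. There is essentially no deep obstacle: the argument is a routine chaining of the two cited results. The only point requiring attention is bookkeeping in the reduction to an $n$-th-power-free integer representative of $a$, namely matching the integral normalization demanded by Gassert's theorem with the form in which the congruence condition is stated.
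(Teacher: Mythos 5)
Your proposal matches the paper's own argument: the paper obtains this corollary precisely by invoking Gassert's criterion to translate the congruence condition into $\mathcal{O}_L=\mathbb{Z}[\sqrt[n]{a}]$ and then applying Corollary \ref{coro:freealmostcyclicpart} with $K=\mathbb{Q}$ totally real. Your additional remark about reducing to an $n$-th-power-free integer representative of $a$ is a reasonable bookkeeping point that the paper leaves implicit, but the route is the same.
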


Likewise, by specializing Proposition \ref{pro:freeraddisj} suitably we obtain:

\begin{coro} Let $L=\mathbb{Q}(\sqrt[n_1]{a_1},\dots,\sqrt[n_k]{a_k})$, with $a_1,\dots,a_k\in\mathbb{Z}$ square-free such that $a_i^{p_i}\not\equiv a_i\,(\mathrm{mod}\,p_i^2)$ for every prime divisor $p_i$ of $n_i$ and $\gcd(a_1n_1,\dots,a_kn_k)=1$. Then, $\mathcal{O}_L$ is $\mathfrak{A}_H$-free, where $H$ is the almost classically Galois structure corresponding to $\mathbb{Q}(\zeta_{n_1\dots n_k})$.
\end{coro}

Next, consider a simple radical extension $L=K(\sqrt[p]{a})$ with $p$ prime. If $K=\mathbb{Q}(\zeta_p)$, Smith \cite{smith2020} proved that $\mathcal{O}_L=\mathcal{O}_K[\sqrt[p]{a}]$ if and only if the ideal generated by $a$ in $\mathbb{Z}[\zeta_p]$ is square-free and $a^p\not\equiv a\,(\mathrm{mod}\,(1-\zeta_p)^2)$. From Corollary \ref{coro:freecyclic}, we deduce:

\begin{coro} Let $K=\mathbb{Q}(\zeta_p)$ and let $L=K(\sqrt[p]{a})$, where $a\in K$. Assume that the ideal $\langle a\rangle$ of $\mathbb{Z}[\zeta_p]$ is square-free and that $a^p\not\equiv a\,(\mathrm{mod}\,(1-\zeta_p)^2)$. Then $\mathcal{O}_L$ is $\mathfrak{A}_{L/K}$-free.
\end{coro}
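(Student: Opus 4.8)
The plan is to recognize this statement as a direct specialization of Corollary \ref{coro:freecyclic} to the case $n=p$ with $\mathcal{O}_K=\mathbb{Z}[\zeta_p]$. First I would note that since $K=\mathbb{Q}(\zeta_p)$ we trivially have $\zeta_p\in K$, so the hypothesis $\zeta_n\in K$ of Corollary \ref{coro:freecyclic} holds for $n=p$; in particular $L/K$ is a cyclic Kummer extension and its almost classically Galois structure corresponding to $K(\zeta_p)=K$ is just its classical Galois structure. Writing $\alpha=\sqrt[p]{a}$, the standing convention on irreducibility of $x^p-a$ over $K$ ensures that $\alpha^p=a\in K$ and no smaller power of $\alpha$ lies in $K$, so the generation hypotheses of Corollary \ref{coro:freecyclic} are met. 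Moreover $\mathcal{O}_K=\mathbb{Z}[\zeta_p]$ is a Dedekind domain, which is all that Corollary \ref{coro:freecyclic} requires (no PID assumption is needed here).

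The only remaining hypothesis to verify is the power integral basis condition $\mathcal{O}_L=\mathcal{O}_K[\alpha]$, and this is exactly where the two arithmetic assumptions enter. By Smith's theorem \cite{smith2020}, for $K=\mathbb{Q}(\zeta_p)$ the equality $\mathcal{O}_L=\mathcal{O}_K[\sqrt[p]{a}]$ holds if and only if the ideal $\langle a\rangle$ of $\mathbb{Z}[\zeta_p]$ is square-free and $a^p\not\equiv a\,(\mathrm{mod}\,(1-\zeta_p)^2)$, which are precisely the hypotheses of the statement. Consequently $\{1,\alpha,\dots,\alpha^{p-1}\}$ is simultaneously an $\mathcal{O}_K$-basis of $\mathcal{O}_L$ and, by Corollary \ref{coro:primitiveeigbasis}, a $K$-basis of $H$-eigenvectors of $L$ for the classical Galois structure $H$.

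With both hypotheses of Corollary \ref{coro:freecyclic} now in place, it applies directly and yields that $\mathcal{O}_L$ is free over the associated order $\mathfrak{A}_{L/K}$. I do not expect any genuine obstacle: all the substantive work has already been carried out in Theorem \ref{maintheorem2} and packaged into Corollary \ref{coro:freecyclic}, so the proof reduces to checking that the numerical conditions in the statement coincide with Smith's criterion for monogeneity. If desired, an explicit free generator can be read off from the second part of Theorem \ref{maintheorem2}: any $\beta=\sum_{j=0}^{p-1}\beta_j\alpha^j$ with every $\beta_j\in\mathcal{O}_K^*$ generates $\mathcal{O}_L$ as an $\mathfrak{A}_{L/K}$-module.
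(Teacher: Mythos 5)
Your proposal is correct and follows the paper's own route exactly: the paper likewise invokes Smith's criterion to obtain $\mathcal{O}_L=\mathcal{O}_K[\sqrt[p]{a}]$ and then applies Corollary \ref{coro:freecyclic} with $\zeta_p\in K$. The extra observations (that the almost classically Galois structure corresponding to $K(\zeta_p)=K$ is the classical one, and the explicit form of a free generator from Theorem \ref{maintheorem2}) are accurate but not needed beyond what the paper records.
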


\subsection{The case of $p$-adic fields}

In this part we consider simple radical degree $n$ extensions $L/K$ of $p$-adic fields that are linearly disjoint with $K(\zeta_n)$. As in the case of number fields, we know sufficient conditions for the freeness of $\mathcal{O}_L$ as an $\mathfrak{A}_H$-module, where $H$ is the almost classically Galois structure on $L/K$ corresponding to $M\coloneqq K(\zeta_n)$. For simplicity, we will assume that $L/K$ is totally ramified.

First, assume that $p\nmid n$, in which case $L/K$ is tamely ramified. By definition, $H=\widetilde{L}[J]^G$, where $J$ is the Galois group of the cyclic extension $\widetilde{L}/M$, and in particular abelian. Hence $H$ is commutative, and applying \cite[Theorem 5.3]{truman2018} gives that $\mathcal{O}_L$ is $\mathfrak{A}_H$-free. Hence, from now on we will be preferably interested in wildly ramified extensions, for which $p\mid n$.

Recall that totally ramified extensions of $p$-adic fields are those for which there is a uniformizer which is a root of some $p$-Eisenstein polynomial. If we assume that such a polynomial is in addition a radical one, we obtain the following:

\begin{pro}\label{pro:freepadictotram} Let $K$ be a $p$-adic field and let $L=K(\sqrt[n]{a})$ with $a\in K$ and $v_K(a)=1$. Suppose in addition that $L\cap K(\zeta_n)=K$. Then $\mathcal{O}_L$ is $\mathfrak{A}_H$-free, where $H$ is the almost classically Galois structure on $L/K$ corresponding to $K(\zeta_n)$.
\end{pro}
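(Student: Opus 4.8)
The plan is to reduce the statement to Proposition \ref{pro:freealmostcyclic}, whose only hypothesis that is not immediate in our setting is the power integral basis condition $\mathcal{O}_L=\mathcal{O}_K[\alpha]$ for $\alpha=\sqrt[n]{a}$. The whole argument hinges on the observation that the assumption $v_K(a)=1$ is precisely an Eisenstein condition, which hands us this integral basis for free.

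First I would record that since $K$ is a $p$-adic field, its ring of integers $\mathcal{O}_K$ is a complete discrete valuation ring, hence a PID, and in particular the fraction field of a Dedekind domain. Thus we are already in the setting required by Proposition \ref{pro:freealmostcyclic}, and the background assumption that $\mathcal{O}_L$ be $\mathcal{O}_K$-free is automatic.

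Next, from $v_K(a)=1$ the element $a$ is a uniformizer of $K$, so the polynomial $x^n-a$ is Eisenstein with respect to the maximal ideal of $\mathcal{O}_K$. This yields three consequences at once: $x^n-a$ is irreducible over $K$ (consistent with the hypothesis $[L:K]=n$); no power $\alpha^k$ with $0<k<n$ lies in $K$, since the minimality of the degree-$n$ polynomial $x^n-a$ forbids any relation $\alpha^k=c\in K$ with $k<n$; and, by the standard theory of Eisenstein extensions of local fields, $\alpha$ is a uniformizer of the totally ramified extension $L/K$ with $\mathcal{O}_L=\mathcal{O}_K[\alpha]$.

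Finally, with $L\cap K(\zeta_n)=K$ given by hypothesis and $\mathcal{O}_L=\mathcal{O}_K[\alpha]$ now established, all the hypotheses of Proposition \ref{pro:freealmostcyclic} are satisfied, and it delivers directly that $\mathcal{O}_L$ is $\mathfrak{A}_H$-free for $H$ the almost classically Galois structure corresponding to $K(\zeta_n)$. I expect no genuine obstacle beyond recognizing the Eisenstein structure: once $v_K(a)=1$ is read as an Eisenstein condition, the integral basis $\mathcal{O}_L=\mathcal{O}_K[\alpha]$ comes for free and the conclusion is immediate from the machinery already assembled in Theorem \ref{maintheorem2} and Proposition \ref{pro:freealmostcyclic}.
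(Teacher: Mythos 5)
Your proposal is correct and follows essentially the same route as the paper: both read $v_K(a)=1$ as saying that $x^n-a$ is Eisenstein, deduce that $\alpha=\sqrt[n]{a}$ is a uniformizer of the totally ramified extension $L/K$ so that $\mathcal{O}_L=\mathcal{O}_K[\alpha]$, and then invoke Proposition \ref{pro:freealmostcyclic}. Your write-up is in fact slightly more careful than the paper's in spelling out the PID hypothesis and the minimality of $n$, but the argument is the same.
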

\begin{proof}
Let $\alpha\in L$ with $\alpha^n=a$. Since $v_K(a)=1$, $\alpha$ is a root of a $p$-Eisenstein polynomial and $L/K$ is totally ramified. Then, we have that $v_L(\alpha)=\frac{v_L(a)}{n}=1$ and hence $\alpha$ is a uniformizer of $L$. Thus, $\mathcal{O}_L=\mathcal{O}_K[\sqrt[n]{a}]$, so we can apply Proposition \ref{pro:freealmostcyclic} to obtain that $\mathcal{O}_L$ is $\mathfrak{A}_H$-free.
\end{proof}

Again, if $n$ is a Burnside number (in particular, if $n$ is prime), the condition that $L\cap K(\zeta_n)=K$ is automatically satisfied.

\subsubsection*{Maximally ramified extensions}

An interesting particular case of the previous result is when $L/K$ has degree $p$ and its normal closure is maximally ramified. This means that the ramification jump of the normal closure (where the only jump we take into account is from the cyclic group of order $p$ to the trivial group, see for instance \cite[\textsection 1.2]{berge1978}) is as high as possible. Our objective is to prove the following.

\begin{pro}\label{pro:freemaxram} Let $L/K$ be a degree $p$ extension of $p$-adic fields whose normal closure is maximally ramified over $K$. Then $L/K$ is almost classically Galois with complement $K(\zeta_p)$ and $\mathcal{O}_L$ is $\mathfrak{A}_H$-free, where $H$ is the almost classically Galois structure on $L/K$ corresponding to $K(\zeta_p)$.
\end{pro}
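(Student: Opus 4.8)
The plan is to reduce the statement to Proposition~\ref{pro:freepadictotram} by proving that a degree $p$ extension $L/K$ whose normal closure is maximally ramified is, up to $K$-isomorphism, of the form $L=K(\sqrt[p]{\pi})$ for a uniformizer $\pi$ of $K$. First I would dispose of the ramification behaviour: since $[L:K]=p$ is prime, $L/K$ is either unramified or totally ramified, and an unramified extension carries no wild ramification and hence cannot be maximally ramified, so $L/K$ is totally ramified; as $p$ is the residue characteristic, it is wildly ramified. Writing $\widetilde{L}$ for the normal closure and $F=K(\zeta_p)$, the group $G=\mathrm{Gal}(\widetilde{L}/K)$ embeds in $\mathrm{AGL}(1,p)$, with normal Sylow $p$-subgroup $J=\mathrm{Gal}(\widetilde{L}/F)$ of order $p$, while $F/K$ is tamely ramified of degree dividing $p-1$; in particular the wild ramification jump of $\widetilde{L}/K$ is governed by the cyclic extension $\widetilde{L}/F$.

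The crux is to convert the maximal ramification hypothesis into this radical structure. Since $\zeta_p\in F$ and $\widetilde{L}/F$ is cyclic of degree $p$, Kummer theory gives $\widetilde{L}=F(\sqrt[p]{\theta})$ for some $\theta\in F^*$, and the Eisenstein computation with $x^p-\theta$ shows that the unique ramification break of $\widetilde{L}/F$ attains its maximal value $\tfrac{p\,v_F(p)}{p-1}$ whenever $v_F(\theta)$ is coprime to $p$, the different then equalling $p\,v_F(p)+(p-1)$. The maximal ramification of $\widetilde{L}$ over $K$ forces $\widetilde{L}/F$ to be a maximally ramified cyclic extension, and a descent argument—tracking the action of $\mathrm{Gal}(F/K)$ on the Kummer class of $\theta$ together with the characterization of maximally ramified extensions of Bergé~\cite{berge1978}—shows that $\theta$ may be chosen in $K^*$ with $v_K(\theta)$ coprime to $p$. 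Replacing $\theta$ by a suitable power coprime to $p$ and dividing by a $p$-th power in $K^*$, I may then assume $L=K(\sqrt[p]{\pi})$ with $v_K(\pi)=1$. I expect this descent step—showing that maximal ramification excludes the unit-type Kummer generators and thereby yields a radical generator already defined over $K$—to be the main obstacle, since it is exactly where the hypothesis bites and where the non-Galois case genuinely departs from classical Kummer theory.

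Once the radical form is established the two conclusions follow quickly. Because $[L:K]=p$ and $[K(\zeta_p):K]$ divides $p-1$, these degrees are coprime, so $L\cap K(\zeta_p)=K$ automatically; applying Theorem~\ref{thm:charactpure} with $\alpha=\sqrt[p]{\pi}$ and $n=p$ then yields that $L/K$ is almost cyclic with Galois complement $M=K(\zeta_p)$, giving the first assertion. For the module structure I would invoke Proposition~\ref{pro:freepadictotram} with $a=\pi$ and $v_K(\pi)=1$: here $\sqrt[p]{\pi}$ is a uniformizer of $L$, so $\mathcal{O}_L=\mathcal{O}_K[\sqrt[p]{\pi}]$ and the proposition gives that $\mathcal{O}_L$ is $\mathfrak{A}_H$-free for the almost classically Galois structure $H$ corresponding to $K(\zeta_p)$; equivalently, $\{1,\sqrt[p]{\pi},\dots,(\sqrt[p]{\pi})^{p-1}\}$ is an integral basis of $H$-eigenvectors and one applies Theorem~\ref{maintheorem2} directly. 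When $\zeta_p\in K$ the same argument specializes to $M=K$ with $H$ the classical Galois structure, recovering Corollary~\ref{coro:freecyclic}.
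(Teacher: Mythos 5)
Your overall strategy coincides with the paper's: reduce to Proposition~\ref{pro:freepadictotram} by showing that $L=K(\sqrt[p]{a})$ with $v_K(a)=1$, after which $\sqrt[p]{a}$ is a uniformizer, $\mathcal{O}_L=\mathcal{O}_K[\sqrt[p]{a}]$, and the coprimality of $p$ with $[K(\zeta_p):K]$ gives $L\cap K(\zeta_p)=K$ for free. The problem is that the one step carrying all the content --- producing a radical generator of $L$ \emph{over $K$}, not merely of $\widetilde{L}$ over the complement --- is exactly the step you do not prove. You correctly identify it as ``the main obstacle'' and then only sketch a descent via the action of $\mathrm{Gal}(F/K)$ on the Kummer class of $\theta$ together with Bergé's characterization; no argument is actually given for why the class of $\theta$ can be represented by an element of $K^*$ of valuation coprime to $p$, and this is a genuinely nontrivial Galois-descent statement (it is the point at which, as you say, the non-Galois case departs from classical Kummer theory). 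As written, the proof has a hole precisely where the hypothesis of maximal ramification has to be converted into the conclusion.

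The paper closes this gap with a short norm argument that avoids Kummer-class descent entirely: from the maximal ramification of the cyclic extension $\widetilde{L}/M$ one gets (via \cite[Proposition 6.3]{delcorsoferrilombardo}) a uniformizer $\gamma$ of $\widetilde{L}$ with $\gamma^p\in\mathcal{O}_M$, and then one sets $\alpha=N_{\widetilde{L}/L}(\gamma)$. This element automatically lies in $L$, satisfies $v_L(\alpha)=1$ because $v_{\widetilde{L}}(\alpha)=r=[\widetilde{L}:L]$, and satisfies $\alpha^p=N_{\widetilde{L}/L}(\gamma^p)\in L\cap M=K$. The identity $M=K(\zeta_p)$ is then \emph{deduced} from $\widetilde{L}=K(\alpha,\zeta_p)=LM$ and linear disjointness --- whereas you assume at the outset that the complement field is $F=K(\zeta_p)$, i.e.\ that $\mathrm{Gal}(\widetilde{L}/K(\zeta_p))$ is the Sylow $p$-subgroup, which is a second (smaller, fixable) unjustified step: a priori one only knows $K(\zeta_p)\subseteq M$ once $\zeta_p\in\widetilde{L}$ is established. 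If you replace your Kummer-class descent by this norm computation, the rest of your argument goes through as you describe.
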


We will need some preparations.

Let $E/F$ be a cyclic degree $p$ extension of $p$-adic fields. We assume that $E/F$ is ramified, in which case it is totally ramified and it has a unique ramification jump $t$. Let $e=e(F/\mathbb{Q}_p)$. Then, it is known that $1\leq t\leq\frac{pe}{p-1}$ (see \cite[Chapter IV, Proposition 2]{serre}). The condition that $E/F$ is maximally ramified is that $t=\frac{pe}{p-1}$. It is also known that if $E/F$ is maximally ramified then $F$ contains the $p$-th roots of unity and $E=F(\sqrt[p]{\pi_F})$ (see for instance \cite[Proposition 6.3]{delcorsoferrilombardo}).

Now, let $L/K$ be a degree $p$ extension of $p$-adic fields and let $\widetilde{L}$ be its normal closure. Let $G=\mathrm{Gal}(\widetilde{L}/K)$ and consider the chain of ramification groups $\{G_i\}_{i=1}^{\infty}$ for $\widetilde{L}/K$. Assume that $\widetilde{L}/K$ is totally ramified. By \cite[Chapter IV, Corollary 5]{serre}, $G=G_0$ is solvable, and hence by \cite[(7.5)]{childs}, $L/K$ is Hopf-Galois. Since $p$ is Burnside, Byott's uniqueness theorem \cite[Theorem 2]{byottuniqueness} gives that $L/K$ admits a unique Hopf-Galois structure and it is almost classically Galois. On the other hand, \cite[Chapter IV, Corollaries 3, 4]{serre} give that $G_1$ is cyclic of order $p$ and $G=G_1\rtimes C$, where $C$ is cyclic of order $r$ coprime to $p$. Let us establish a presentation $$G=\langle\sigma,\tau\,|\,\sigma^p=\tau^r=1,\,\tau\sigma\tau=\sigma^g\rangle,$$ where $g\in\mathbb{Z}$ has order $r$ modulo $p$, such that $G_1=\langle\sigma\rangle$ and $C=\langle\tau\rangle$. We can assume without loss of generality that $L=\widetilde{L}^C$.

Moreover, note that there is a unique $t\geq1$ such that $G_t\cong C_p$ and $G_{t+1}=\{1\}$. In analogy with the cyclic case, $t$ is also called the ramification jump of the extension $\widetilde{L}/K$. From \cite[Chapter IV, Proposition 2]{serre} we deduce that $t$ is the unique ramification jump of the cyclic degree $p$ extension $\widetilde{L}/M$. Since $M/K$ is totally ramified, the ramification index of $M/\mathbb{Q}_p$ is $re$, where $e=e(K/\mathbb{Q}_p)$. Then, we have that $$1\leq t\leq\frac{rpe}{p-1},$$ and $\widetilde{L}/K$ is maximally ramified if and only if $t=\frac{rpe}{p-1}$.

\begin{lema} Let $L/K$ be a degree $p$ extension of $p$-adic fields whose normal closure is maximally ramified over $K$. Then $L/K$ is almost classically Galois with complement $K(\zeta_p)$ and there is $\alpha\in\mathcal{O}_L$ such that $v_L(\alpha)=1$ and $\alpha^p\in\mathcal{O}_K$.
\end{lema}
\begin{proof}
The fact that $L/K$ is almost classically Galois follows from the previous discussion due to the hypothesis that $L/K$ is ramified (which for degree $p$ extensions is equivalent to total ramification); call $M$ its complement. Let $\zeta_p$ be a primitive $p$-th root of the unity and let $\pi_M$ be a uniformizer of $M$. Since $\widetilde{L}/M$ is maximally ramified, $\zeta_p\in M$ and $\widetilde{L}=M(\sqrt[p]{\pi_M})$, that is, there is $\gamma\in\mathcal{O}_{\widetilde{L}}$ with $v_{\widetilde{L}}(\gamma)=1$ and $\gamma^p\in\mathcal{O}_M$ such that $\widetilde{L}=M(\gamma)$.

Let $\alpha=N_{\widetilde{L}/L}(\gamma)\in L$. Then $v_{\widetilde{L}}(\alpha)=rv_L(\alpha)$, and on the other hand, using that $\gamma$ is a uniformizer of $\widetilde{L}$, we have $v_{\widetilde{L}}(\alpha)=r$, so $v_L(\alpha)=1$. In particular, $\alpha$ is a primitive element of $L/K$. Moreover, $\alpha^p=N_{\widetilde{L}/L}(\gamma^p)\in L\cap M=K$, and it is clear that $\alpha$ is an algebraic integer, so $\alpha^p\in\mathcal{O}_K$. Finally, we prove that $M=K(\zeta_p)$. Since $\alpha^p\in K$, the normal closure of $L/K$ is $\widetilde{L}=K(\alpha,\zeta_p)=LK(\zeta_p)$, and since we have also that $\widetilde{L}=LM$ with $L$ and $M$ $K$-linearly disjoint and $K(\zeta_p)\subseteq M$, $M=K(\zeta_p)$.
\end{proof}

We have proved that a degree $p$ extension of $p$-adic fields $L/K$ with maximally ramified normal closure is almost classically Galois with complement $K(\zeta_p)$, and in particular $L\cap K(\zeta_p)=K$. Let $\alpha\in L$ be as in the previous lemma and call $a\coloneqq\alpha^p\in K$. Then $L=K(\sqrt[p]{a})$ and $v_L(\alpha)=1$, that is, $v_K(a)=1$. Hence we are under the hypotheses of Proposition \ref{pro:freepadictotram}, and then Proposition \ref{pro:freemaxram} is established.

\section*{Acknowledgements}

The author is thankful to Ilaria Del Corso, Paul Truman and Lorenzo Stefanello for their insightful comments. The author also wants to thank the referee for their helpful comments and suggestions to improve the quality of this paper. This work was supported by Czech Science Foundation, grant 21-00420M, by Charles University Research Centre program UNCE/SCI/022 and by the grant PID2022-136944NB-I00 (Ministerio de Ciencia e Innovación).

\printbibliography


\end{document}